\newtheorem{theorem}{Theorem}[section]
\newtheorem{proposition}[theorem]{Proposition}
\newtheorem{lemma}[theorem]{Lemma}
\newtheorem{definition}[theorem]{Definition}
\newtheorem{corollary}[theorem]{Corollary}
\newtheorem{ex}[theorem]{Example}
\newtheorem{remark}[theorem]{Remark}
 \numberwithin{equation}{section}
\title{Removable  singularities in the boundary for quasilinear elliptic equations}
\author{Juan A. Apaza }
\address{Juan A. Apaza, Departamento de Matem\'atica, Universidade Federal da Para\'\i ba, 58051-900, Jo\~ao Pessoa--PB, Brazil}
\email{jpablo@id.uff.br}
\author{Manassés de Souza}
\address{Manassés de Souza, Departamento de Matem\'atica, Universidade Federal da Para\'\i ba, 58051-900, Jo\~ao Pessoa--PB, Brazil}
\email{manasses.xavier@academico.ufpb.br}
\begin{document}

\maketitle

\begin{abstract}
In this work, we are interested in to study removability of a singular set in the boundary for some classes of quasilinear elliptic equations. We will approach this question in two different ways: through an asymptotic behavior at the infinity of the solutions, or through conditions in the Sobolev norm of  solutions along the direction of the singular set.
\end{abstract}

\let\thefootnote\relax\footnote{2020 \textit{Mathematics Subject Classification}. 35A21, 35D30, 35J66, 46E35.}
\let\thefootnote\relax\footnote{\textit{Key words and phrases}.  singular set, removable singularity,  weight function.}

\markright{REMOVABLE  SINGULARITIES}
\markleft{REMOVABLE  SINGULARITIES}
\section{Introduction and main results}

This work addresses the issue of removable singularities for weak solutions of nonlinear elliptic equations with boundary conditions in the form:
\begin{equation} \label{1}
\left\{
\begin{aligned}
-\operatorname{div} \left[ (1+|x|) ^{-\alpha} A(\nabla u) \right]   +(1+|x|) ^{- \alpha }g(u)& =0 & & \text { in } \Omega,\\
(1+|x|)^{-\alpha} A(\nabla u) \cdot \nu + (1+|x|)^{ -\beta} b(u)   &= 0 & & \text { on }  \partial \Omega,
\end{aligned}
\right.
\end{equation}
where $\Omega = \mathbb{R}^{d} \times B^+ _R$, $B^+ _R=\{(x_{d+1}, \ldots, x_N)\in \mathbb{R}^{N-1-d}\times \mathbb{R}^+\:|\: \,\,|(x_{d+1}, \ldots, x_N)| <R\}$, $R\in (0,\infty]$, $N\geq 3$ and $d\in \{ 1, \ldots, N-2 \}$. Additionally, the singular set is denoted as $\Gamma = \{x\in \mathbb{R}^{N} \:|\: x_{d+1}=\cdots=x_{N}=0 \} $. When $d=0$, we denote $\Omega = B^+_R$ and $\Gamma = \{x\in \mathbb{R}^{N} \:|\: x=0\}$.

The singularity problems of solutions for partial differential equations are a fundamental subject in the study of the theory of partial differential equations. These problems have been extensively studied by various authors, see \textit{e.g.}, \cite{cataldo2011removability,  harvey1970removable, koskela1993removability,  Horiuchi(2001)degenerateelliptic}. For some problems related to the removability of unbounded singular sets, we refer to \cite{caffarelli2022singular, fakhi2000new,  Bidaut-Veron2014localand, juutineremovabilitylevel2005}.

Let us recall some well known results concerning isolated singular points for solutions of 
\begin{equation}\label{47}
-\sum _{j=1}^N [ a_j (x,\nabla u) ]_{x_j} + \tilde{g}(x,u)=0 \quad \text { in } \quad \Sigma,
\end{equation}
where $\Sigma$ is a bounded domain in $\mathbb{R}^N$, $a_j(x,\eta)$, $j=1, \ldots, N$, satisfy the standard conditions of ellipticity and growth with respect to $x$ and $\eta$. Furthermore, $g$ satisfies certain structural hypotheses. In the scenario where $\tilde{g}(x, u) \equiv 0$, the removability condition of an isolated singular point $x_0$ was examined by Serrin \cite{serrin1965localbehavior, serrin1965isolated}. The criterion for this situation can be expressed as follows:
$$
u(x)=o(\left|x-x_0\right|^{-\frac{N-p}{p-1}})\quad \text { and } \quad 1<p<N,
$$
see \cite{nicolosi2003precise}.  The removable singularity of a solution for the equation
\begin{equation*}
-\Delta u + \tilde{g}(x,u)=0 \quad \text { in } \quad \Sigma,
\end{equation*}
was studied by Brezis and Veron \cite{brezis1980removable}. They proved the removabilty of isolated singularities for solutions under the condition $\tilde{g}(x,u) \operatorname{sign} u \geq  |u|^q$ with $q\geq N/(N-2)$ and $N\geq 3$. For further results  in this directions, refer to  \cite{cianci2010removability}. Skrypnik \cite{skrypnik2005removability} considered the general nonlinear equations \eqref{47} and proved that if  conditions $\tilde{g}(x,u) \operatorname{sign} u \geq  |u|^q$,
$$
1<p<N\quad \text { and } \quad q \geq N(p-1)/(N-p),
$$
are fulfilled,  the singular point is removable. Also, in \cite{liskevich2008isolated}, the authors proved the removability of the isolated singular point for \eqref{47} with a weighted function $|x|^{-\alpha}$ in the subordinate part $\tilde{g}(x, u)$, provided that the conditions
$$
q \geq (p-1)(N-\alpha)/(N-p), \quad \alpha<p \quad \text { and } \quad 1<p<N,
$$
are fulfilled.

When $\sum _{j=1} ^N a_j (x,\eta) \eta _j \geq \mu w_1 (x) |\eta| ^p$, $|a_j (x,\eta)|\leq \mu ^{-1} w_1 (x)|\eta| ^{p-1}$, and $\tilde{g}(x,u) \operatorname{sign} u \geq  w_0 (x)|u|^q$, in the problem involving weighted functions $w_0$ and $w_1$, Mamedov and Harman \cite{mamedov2009removability} proved that an isolated singular point $x_0$ is removable for solutions of equation \eqref{47} if the weighted function condition
$$
w_0 (B(x_0 , \varepsilon)) \left(\frac{w_1 (B(x_0 , \varepsilon))}{\varepsilon ^p w_0 (B(x_0 , \varepsilon))} \right) ^{\frac{q}{q-p+1}}=o(1), \quad \varepsilon \rightarrow 0, 
$$
is satisfied, with $p>1$ and $q>p-1$. 

In the case $\sum _{j=1} ^N a_j (x,\eta) \eta _j \geq \mu  |\eta| ^{p(x)}$, $|a_j (x,\eta)|\leq \mu ^{-1} |\eta| ^{p(x)-1}$, and $\tilde{g}(x,u) \operatorname{sign} u \geq  |u|^{q(x)}$, where $p,q:\bar{\Sigma} \rightarrow \mathbb{R}$ are continuous functions, Fu and Shan \cite{fu2016removability} provided sufficient conditions for the removability of isolated singular points for elliptic equations in the variable exponent Sobolev space $W^{1,p(x)} (\Sigma)$. They showed that an isolated point $x_0$ is removable for solutions of equation \eqref{47} if the following conditions are met: $1 < \inf_{\bar{\Sigma}} p \leq \sup_{\bar{\Sigma}} p < N$, $\inf_{\bar{\Sigma}} (q-p-1) > 0$, and
$$
\sup _{\bar{\Sigma}}\frac{pq}{q-p+1}<N.
$$
An extension of the problem discussed by Fu and Shan is presented in \cite{apaza2022removable}. This work addresses the problem of removing singularities when the singular set is a compact smooth manifold of dimension $d_0 \in \{0, \ldots, N-2\}$ located within the boundary $\partial \Sigma$. In this paper, we have the Neumann problem
$$
\left\{ \begin{aligned}
-\sum _{j=1}^N [ a_j (x,\nabla u) ]_{x_j} + \tilde{g}(x,u) &= 0 & & \text { in }  \Sigma,\\
\sum _{j=1}^N  \frac{\partial a_j }{\partial \nu_j } (x,\nabla u)&=0 & & \text { on }  \partial \Sigma,
\end{aligned}
\right.
$$
where $1 < \inf_{\bar{\Sigma}} p \leq \sup_{\bar{\Sigma}} p < N$ and $\inf_{\bar{\Sigma}} (q-p-1) > 0$. For this case, the condition related to the dimension $d_0$ is
$$
\sup _{\bar{\Sigma}}\frac{pq}{q-p+1}<N-d_0.
$$

A different approach from  our study regarding equations with absorption or source terms
\begin{equation} \label{78}
-\Delta _p u+  |u|^{q-1} u = \mathcal{M},
\end{equation}
in a domain $\Sigma_1 \subset \mathbb{R}^N$, where $1 < p < N$, $q > p - 1$, and $\mathcal{M}$ is a Radon measure on $\Sigma_1$, is presented in \cite{Bidaut-Veron2003removsinquasi}. In that work, the removability result depends on the Bessel capacity $cap_{1, s}(F, \mathbb{R}^N)=0$, where $s > pq / (q - p + 1)$ and the singular set $F$ is a relatively closed set in $\Sigma_1$. In this context, the removability problem addresses whether a locally renormalized solution $u$ of \eqref{78} in $\Sigma_1 \backslash F$ can be extended as a locally renormalized solution of the equation in the entire $\Sigma_1$. In relation to the problem of characterizing removable sets, when $p = 2$, it was shown in \cite{barras1984singu} that they are precisely the sets $F$ with $cap_{2, q/(q-1)}(F, \mathbb{R}^N) = 0$. For the problem with nonlinear boundary conditions
$$
\left\{ \begin{aligned}
-\Delta _p u &= 0 & & \text { in }  \mathbb{R}^N_+,\\
|\nabla u|^{p-2}\frac{\partial u}{\partial \nu } &=u^q & & \text { on }  \partial \mathbb{R}^N_+.
\end{aligned}
\right.
$$
In \cite{aguirre2019harmonic}, the author provides a characterization of compact removable sets $K \subset \partial \mathbb{R}^N_+$. It was shown that they are precisely the sets $K$ with Riesz capacities $cap_{I_{p-1}, q/(q-p+1), N-1}(K) = 0$, where
$$
1<p<N \quad \text { and } \quad q>(N-1)(p-1)/(N-p).
$$

We follow \cite{mamedov2009removability, fu2016removability, apaza2022removable}, which address compact singular sets, with the Poincaré inequality playing a important role in these papers. Since the singular set $\Gamma = \mathbb{R}^d$ is unbounded, we use compactness of trace operator and continuous embedding  instead of Poincaré inequality. Regarding problem \eqref{47}, we extend the results in \cite{mamedov2009removability, fu2016removability}, where the singularity is an isolated point, see Corollary \ref{61} below.

Let   $A:   \mathbb{R} ^{N}  \rightarrow \mathbb{R} ^{N}$, $g:  \mathbb{R}   \rightarrow \mathbb{R}$  and $b: \mathbb{R}  \rightarrow \mathbb{R} $ be measurable functions with $g$, $b$  locally bounded. Initially, we assume the following conditions:

\begin{enumerate}[label=($H_{\arabic*}$)]
\item \label{43} $A (\eta) \eta \geq \mu|\eta|^{p  }$, $\left|A (  \eta)\right| \leq \mu^{-1} |\eta|^{p  -1}$,  $\forall \eta\in \mathbb{R}^N$, and $g( u) \operatorname{sign} u \geq  |u|^{q}$,    $b( u) \operatorname{sign} u \geq \mu _1  |u|^{p  - 1}$,  $\forall u \in \mathbb{R}$,
\item  \label{45}  $A( -\eta)=-A (\eta)$,  $ \forall \eta\in \mathbb{R}^N$,
\end{enumerate}
where  $p \in (1,N)$, $\alpha ,  \beta \in [0,N)$, $p   -1 \leq \beta -\alpha $, $q      - p   +1>0$,  $\mu >0$ and $\mu _1 \geq 0$.

\medskip

We begin  by considering weak solutions satisfying the  following property.

\vspace{6.5pt}

($A_1$) \  {\it Write $x^{\prime} =( x_{d+1}, \ldots, x_{N})$ and $x^{\prime \prime} = (x_{1}, \ldots,$ $ x_{d})$.  Let $u : \Omega \rightarrow \mathbb{R}$ be a measurable function. Suppose there are $\delta >d$ and $C_0>0$ such that for all $r_1, r_2 \in (0,1)$ with $r_1<r_2$ we have}
$$
u^{\pm}(x)\leq C_0 m^{\pm}_{r_1,r_2} (1+|x^{\prime \prime}|)^{-\delta}, \quad \text { \textit{a.e.} {\it in} } \{x\in \Omega \:|\: r_1< |x^\prime| <r_2\},
$$
{\it where  $m^{\pm}_{r_1,r_2}=\operatorname{ess} \operatorname{sup} \{u^{\pm}(x)\:|\:x\in \Omega, \  r_1< |x^\prime| <r_2\}$, $u^+ = \max\{u,0\} $ and $u^- = -\min\{u,0\} $.}

\vspace{6.5pt}

Our first main result is  stated as follows:

\begin{theorem}\label{23}
Suppose that \ref{43} and  \ref{45}  are satisfied $p   -1 < \beta $,  $ N-d > \max \{\frac{pq}{q-p+1}, \frac{p^2}{p-1}\}$ and $\mu _1 >0$. Under the assumption $(A_1)$, if $u\in W^{1,p} _{\operatorname{loc}}(\bar{\Omega} \backslash \Gamma) \cap L^\infty _{\operatorname{loc}} (\bar{\Omega} \backslash \Gamma )$ is a (weak) solution of \eqref{1} in $\bar{\Omega} \backslash \Gamma$. Then the singularity of $u$ at $\Gamma$ is removable.
\end{theorem}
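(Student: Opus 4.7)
My strategy is the classical removability scheme: test the weak formulation against $u$ times a cutoff that vanishes near $\Gamma$, derive a weighted energy inequality, show the error terms produced by the cutoff's gradient vanish as the cutoff shrinks onto $\Gamma$, and thereby extend the weak formulation to test functions compactly supported in $\bar\Omega$.

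\textit{Step 1: Energy estimate.} Fix $\psi_0\in C^\infty([0,\infty))$ with $\psi_0\equiv 0$ on $[0,1]$, $\psi_0\equiv 1$ on $[2,\infty)$, and set $\varphi_\varepsilon(x):=\psi_0(|x'|/\varepsilon)$, so $|\nabla\varphi_\varepsilon|\le C/\varepsilon$ is supported in $A_\varepsilon:=\{\varepsilon<|x'|<2\varepsilon\}$. Pick a fixed $\zeta\in C^\infty_c(\bar\Omega)$; the goal is to establish the weak formulation of \eqref{1} against such $\zeta$. Since $u\in W^{1,p}_{\operatorname{loc}}(\bar\Omega\setminus\Gamma)\cap L^\infty_{\operatorname{loc}}(\bar\Omega\setminus\Gamma)$ and $\operatorname{supp}(\varphi_\varepsilon\zeta)$ lies at positive distance from $\Gamma$, the function $u\varphi_\varepsilon^p\zeta^p$ is admissible as a test function in the weak formulation of \eqref{1} on $\bar\Omega\setminus\Gamma$. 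Combining \ref{43} (coercivity $A(\nabla u)\cdot\nabla u\geq\mu|\nabla u|^p$, together with $g(u)u\geq|u|^{q+1}$ and $b(u)u\geq\mu_1|u|^p$) with Young's inequality applied to the cross term $p\int(1+|x|)^{-\alpha}A(\nabla u)\cdot\nabla\varphi_\varepsilon\,u\,\varphi_\varepsilon^{p-1}\zeta^p$, one arrives at
\begin{equation*}
\int_\Omega(1+|x|)^{-\alpha}(|\nabla u|^p+|u|^{q+1})\varphi_\varepsilon^p\zeta^p+\mu_1\!\int_{\partial\Omega}(1+|x|)^{-\beta}|u|^p\varphi_\varepsilon^p\zeta^p\leq C\!\int_\Omega(1+|x|)^{-\alpha}|u|^p(|\nabla\varphi_\varepsilon|^p\zeta^p+\varphi_\varepsilon^p|\nabla\zeta|^p).
\end{equation*}

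\textit{Step 2: Controlling the right-hand side as $\varepsilon\to 0$.} The $|\nabla\zeta|^p$-piece is $\varepsilon$-independent and finite: on the unbounded-in-$x''$ part of $\operatorname{supp}\zeta$, the pointwise decay $|u|\leq C_0 m_\varepsilon(1+|x''|)^{-\delta}$ from $(A_1)$ with $\delta>d$ (whence $p\delta>d$) yields integrability via $\int_{\mathbb{R}^d}(1+|x''|)^{-p\delta}dx''<\infty$. The critical piece on $A_\varepsilon$ is treated by Hölder with exponents $(q+1)/p$ and $(q+1)/(q-p+1)$; combined with $(A_1)$ to restore $x''$-integrability in the measure factor, this produces
\begin{equation*}
\int_{A_\varepsilon}(1+|x|)^{-\alpha}|u|^p|\nabla\varphi_\varepsilon|^p\leq C\varepsilon^{\sigma}\Bigl(\int_{A_\varepsilon}(1+|x|)^{-\alpha}|u|^{q+1}\Bigr)^{\!p/(q+1)},\qquad \sigma:=-p+(N-d)\tfrac{q-p+1}{q+1}.
\end{equation*}
Since $\int_{A_\varepsilon}|u|^{q+1}$ is dominated by the LHS of the energy estimate at the smaller scale $\varphi_{\varepsilon/2}$, writing $\Phi_\varepsilon$ for this LHS, one extracts a recursion $\Phi_\varepsilon\leq C\varepsilon^\sigma\Phi_{\varepsilon/2}^{p/(q+1)}$. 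The exponent condition $N-d>pq/(q-p+1)$ is used to drive $\sigma$ to be positive enough for a dyadic bootstrap (exploiting $p/(q+1)<1$ since $q>p-1$) to force $\Phi_\varepsilon\to 0$ as $\varepsilon\to 0$; a Moser-type local $L^\infty$ bound for the equation on bounded-in-$x''$ slices of $A_\varepsilon$ (made accessible by $(A_1)$) turns polynomial a priori growth of $m_\varepsilon$ into smallness at small scales. Consequently $u\in W^{1,p}((1+|x|)^{-\alpha})\cap L^{q+1}((1+|x|)^{-\alpha})$ on $\operatorname{supp}\zeta$, across $\Gamma$.

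\textit{Step 3: Extending the weak formulation.} Given $\Psi\in C^\infty_c(\bar\Omega)$, insert $\Psi\varphi_\varepsilon$ into the weak formulation on $\bar\Omega\setminus\Gamma$ and pass to the limit $\varepsilon\to 0$. The bulk and boundary terms $\int(1+|x|)^{-\alpha}g(u)\Psi\varphi_\varepsilon$ and $\int_{\partial\Omega}(1+|x|)^{-\beta}b(u)\Psi\varphi_\varepsilon$ converge to their $\varphi_\varepsilon=1$ counterparts by dominated convergence, using the finite weighted $L^{q+1}$-energy from Step 2 and, for the boundary, the compactness of the trace operator and the continuous embedding ensured by the strict $p-1<\beta$ (these replace the Poincaré inequality, as discussed in the introduction, to handle the unboundedness of $\Gamma$). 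The residual error from $\nabla\varphi_\varepsilon$ is bounded by Hölder,
\begin{equation*}
\Bigl|\int_\Omega(1+|x|)^{-\alpha}A(\nabla u)\cdot\nabla\varphi_\varepsilon\,\Psi\Bigr|\leq C\|\Psi\|_\infty\Bigl(\int_{A_\varepsilon}(1+|x|)^{-\alpha}|\nabla u|^p\Bigr)^{\!(p-1)/p}\Bigl(\int_{A_\varepsilon}(1+|x|)^{-\alpha}|\nabla\varphi_\varepsilon|^p\Bigr)^{\!1/p},
\end{equation*}
the first factor tending to $0$ by dominated convergence (via Step 2), and the second being $O(\varepsilon^{(N-d-p)/p})$ which is bounded thanks to $N-d>p^2/(p-1)>p$. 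This yields the weak formulation of \eqref{1} on all of $\bar\Omega$, proving the removability.

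\textit{Main obstacle.} The technical crux is closing the bootstrap in Step 2: $m_\varepsilon$ may blow up as $\varepsilon\to 0$, so the naive bound $\Phi_\varepsilon\leq Cm_\varepsilon^p\varepsilon^{N-d-p}$ does not manifestly vanish. The remedy is the dyadic self-improvement $\Phi_\varepsilon\leq C\varepsilon^\sigma\Phi_{\varepsilon/2}^{p/(q+1)}$, which together with $\sigma>0$ and $p/(q+1)<1$ iterates the estimate across scales $\varepsilon/2^k$ and converts polynomial a priori growth into genuine smallness. The complementary condition $N-d>p^2/(p-1)$ controls the cross term $A(\nabla u)\cdot\nabla\varphi_\varepsilon$ in the limiting passage of Step 3, while the compatibility $p-1\leq\beta-\alpha$ and $\mu_1>0$ with the strict $p-1<\beta$ are needed to balance the bulk and boundary weights throughout.
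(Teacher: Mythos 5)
Your overall scheme (Caccioppoli estimate with an annular cutoff, dyadic analysis of the energy, then passage to the limit in the weak formulation) is the right general template, but there are two concrete gaps. First, your bootstrap is fed by an unproved input: the ``polynomial a priori growth of $m_\varepsilon$''. For an arbitrary solution in $W^{1,p}_{\operatorname{loc}}(\bar\Omega\setminus\Gamma)\cap L^\infty_{\operatorname{loc}}(\bar\Omega\setminus\Gamma)$ there is no a priori rate of blow-up near $\Gamma$, and a ``Moser-type local $L^\infty$ bound'' in terms of local norms of $u$ on slices of $A_\varepsilon$ is circular, since those norms are exactly what is not yet controlled. What is needed is a universal, Keller--Osserman-type bound coming from the absorption term $|u|^q$ with $q>p-1$, namely $|u(x)|\le C|x'|^{-\tau}$ with $\tau$ close to $p/(q-p+1)$; this is precisely the content of the paper's Proposition \ref{266} (via the level-set iteration of Lemma \ref{219}, which in turn relies on the boundary term with $\mu_1>0$, $p-1<\beta$ and the trace/equivalent-norm results of Propositions \ref{63}, \ref{68}, \ref{59} as a substitute for Poincar\'e on the unbounded domain). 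This step is the heart of the proof and cannot be waved through.

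Second, even granting polynomial growth, your exponent arithmetic does not close in the full range of the theorem. Testing with $u\varphi_\varepsilon^p\zeta^p$ and applying H\"older with exponents $\frac{q+1}{p}$, $\frac{q+1}{q-p+1}$ gives $\sigma=-p+(N-d)\frac{q-p+1}{q+1}$, and iterating $\Phi_\varepsilon\le C\varepsilon^\sigma\Phi_{\varepsilon/2}^{p/(q+1)}$ yields at best $\Phi_\varepsilon\lesssim\varepsilon^{\,N-d-\frac{p(q+1)}{q-p+1}}$, so you need $N-d\ge \frac{p(q+1)}{q-p+1}$, which is strictly stronger than the hypothesis $N-d>\frac{pq}{q-p+1}$. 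For instance $p=3$, $q=4$, $N-d=7$ satisfies the theorem's hypotheses ($\max\{\frac{pq}{q-p+1},\frac{p^2}{p-1}\}=6$) but not your condition ($\frac{p(q+1)}{q-p+1}=7.5$). The paper reaches the sharp exponent by Skrypnik's logarithmic test function $\bigl(\ln\max\{u/\Lambda(\rho),1\}\bigr)\psi_r^{\gamma}$ with $\gamma=\frac{pq}{q-p+1}$ (Lemma \ref{267}): after Young's inequality the logarithm contributes only sub-polynomially, so the borderline weight exponent is $\frac{pq}{q-p+1}$, matching the hypothesis; boundedness of $u$ then makes the Caccioppoli bound $\int_{\{r\le|x'|\le 2r\}}|\nabla u|^p\le Cr^{N-d-p}$ summable over dyadic scales. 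A further, smaller issue: in Step 3 you only treat $\Psi\in C_c^\infty(\bar\Omega)$ with $\|\Psi\|_\infty$, whereas removability requires the identity for all $\varphi\in W^{1,p}(\Omega)\cap L^\infty(\Omega)$; since the annulus $\{r\le|x'|\le 2r\}$ is unbounded in $x''$, the paper must estimate the cutoff error via $\|\varphi\|_p$, and it is exactly there that the hypothesis $N-d>\frac{p^2}{p-1}$ (i.e.\ $(p-1)(N-d-p)>p$) is used — your claim that it is only needed to bound a factor by $O(\varepsilon^{(N-d-p)/p})$ misses this point, and a density argument would have to be supplied instead.
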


Our second main result is motivated by the following examples, where the function $u$ blows up as we approach the singular set. 
\begin{ex} 
$ $
\begin{enumerate}
\item[(i)]  Assume that $\sigma > N-d-2$, $q>1$ and $\sigma = \frac{2}{q-1} $. Then  $\frac{2q}{q-1}> N-d$ and $u(x)=[\sigma (\sigma +2 - N+d)]^{\frac{1}{q-1}}|x^\prime|^{-\sigma}$ is a solution of
\begin{equation*} 
\left\{
\begin{aligned}
- \Delta u    +|u|^{q-1}u & =0 & & \text { in }  \mathbb{R}^{N-1}\times (0,\infty),\\
\frac{\partial u}{\partial \nu}   &= 0 & & \text { on }   \mathbb{R}^{N-1},
\end{aligned}
\right.
\end{equation*}
with no removable singularity at $\Gamma = \mathbb{R}^d$. 

\item[(ii)] Assume that $\alpha  \geq 0$, $\frac{2}{q-1} \geq \sigma > N-d-2$, $q>1$. Then  $\frac{2q}{q-1}> N-d$ and $u(x)=[\sigma (\sigma +2 - N+d)]^{\frac{1}{q-1}}|x^\prime|^{-\sigma}$  is a subsolution of
\begin{equation*} 
\left\{
\begin{aligned}
- \sum _{i=1}^N \left[ (1+|x|)^{-\alpha}  u _{x_i} \right]_ {x_i}  +(1+|x|)^{-\alpha}|u|^{q-1}u &\leq  0 & &\text { in } \mathbb{R}^{d}\times B^+ _1,\\
\frac{\partial u}{\partial \nu}  &\leq 0  & & \text { on } \partial (\mathbb{R}^{d}\times  B^+ _1).
\end{aligned}
\right.
\end{equation*}
\noindent However $[\sigma (\sigma +2 - N+d)]^{\frac{1}{q-1}}|x^\prime| ^{-\tau}\leq u(x)$ if $\tau >\frac{2}{q-1}$. Compare this example  with  Lemma \ref{19} below.
\end{enumerate}
\end{ex}

We are now working under the assumption:

\vspace{6.5pt}
 
$(A_2)$ \ {\it Let $u : \Omega \rightarrow \mathbb{R}$ be a function and $0<q_1 <((N-d)(q-p+1) -pq) /(2p)$. Suppose that $u_1 \in  W^{1,p} _{\operatorname{loc}}(\overline{B^+ _R}  \backslash \{0\}   ; (1+|x^\prime|)^{-\alpha})$ $ \cap  L^\infty _{\operatorname{loc}} (\overline{B^+ _R}  \backslash \{0\} ) $ and  $u_2  \in W^{1,p} (\mathbb{R}^{d} ; (1+|x^{\prime \prime}|)^{-\alpha}) \cap L^{q+1}(\mathbb{R}^{d} ; (1+|x^{\prime \prime}|)^{-\alpha})  \cap  \in L^{q_1} (\mathbb{R}^{d} ; (1+|x^{\prime \prime}|)^{-\alpha}) \cap L^{\infty}(\mathbb{R}^{d})$ satisfy the conditions:  $u(x)= u_1 (x^{\prime})  u_2 (x^{\prime \prime})$, $u_2 (x^{\prime \prime})>0$, for every $x\in \Omega$, and $(u_2 +|\nabla u_2|)^{p-1}\in L^{1} (\mathbb{R}^d ;  (1+|x^{\prime \prime}|)^{-\alpha})$.}

\begin{theorem}\label{77}
Suppose that  \ref{43} and \ref{45} are satisfied and  $ N-d > \frac{pq}{q-p+1}$. Under the assumption $(A_2)$, if $u\in W^{1,p} _{\operatorname{loc}}(\bar{\Omega} \backslash \Gamma ; $ $\vartheta _{\alpha}) \cap L^\infty _{\operatorname{loc}} (\bar{\Omega} \backslash \Gamma )$ is a (weak) solution of \eqref{1} in $\bar{\Omega} \backslash \Gamma$. Then the singularity of $u$ at $\Gamma$ is removable.
\end{theorem}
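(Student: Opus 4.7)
The plan is to verify that $u$ satisfies the weak formulation of \eqref{1} against every test function $\varphi \in C_c^\infty(\bar{\Omega})$, not merely those supported away from $\Gamma$. First I would fix such a $\varphi$ and introduce a cutoff in the normal direction $\eta_\varepsilon(x)=\eta(|x'|/\varepsilon)$ with $\eta\equiv 0$ on $[0,1]$ and $\eta\equiv 1$ on $[2,\infty)$, together with a tangential cutoff $\psi_R(x'')=\psi(|x''|/R)$. Since $\varphi\eta_\varepsilon$ is admissible on $\bar\Omega\setminus\Gamma$, inserting it and sending $\varepsilon\to 0$ reduces the proof to two claims: (i) $A(\nabla u)$, $g(u)$, $b(u)$ lie in the appropriate weighted $L^1_{\mathrm{loc}}$ spaces across $\Gamma$, and (ii) the cutoff error
$$I_\varepsilon := \int_\Omega (1+|x|)^{-\alpha}\,\varphi\,A(\nabla u)\cdot\nabla\eta_\varepsilon\,dx$$
tends to zero as $\varepsilon\to 0$.

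Both claims I plan to derive from a Caccioppoli-type bound obtained by testing \eqref{1} on $\bar\Omega\setminus\Gamma$ with $u\eta_\varepsilon^p\psi_R^p$. Using \ref{43} for the coercivity of $A$ and $g$ and absorbing the cross terms via Young's inequality, one obtains
$$\int_\Omega (1+|x|)^{-\alpha}\bigl(|\nabla u|^p+|u|^{q+1}\bigr)\eta_\varepsilon^p\psi_R^p\,dx \le C\int_\Omega (1+|x|)^{-\alpha}|u|^p\bigl(|\nabla\eta_\varepsilon|^p\psi_R^p+\eta_\varepsilon^p|\nabla\psi_R|^p\bigr)\,dx.$$
Substituting $u=u_1(x')u_2(x'')$ from $(A_2)$ and using $(1+|x|)^{-\alpha}\le (1+|x''|)^{-\alpha}$, the right-hand side factors as an $x'$-integral (on the annulus $\{\varepsilon<|x'|<2\varepsilon\}$ for the $|\nabla\eta_\varepsilon|^p$ piece, or over the support of $\eta_\varepsilon$ for the $|\nabla\psi_R|^p$ piece) times an $x''$-integral of the form $\int(1+|x''|)^{-\alpha}|u_2|^p(\cdot)\,dx''$, controlled by combining $u_2\in L^\infty$ with the $L^{q_1}$ and weighted $W^{1,p}$ hypotheses. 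For the annular $x'$-integral I would use a Young-type splitting of $|u_1|^p$ into a piece proportional to $|u_1|^{q+1}$ (re-absorbed into the left-hand side after combining with the $x''$-factor) plus a pure volume remainder, the balance being secured by $q_1<((N-d)(q-p+1)-pq)/(2p)$ together with $N-d>pq/(q-p+1)$.

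Letting $\varepsilon\to 0$ and then $R\to\infty$ should yield $|\nabla u|^p,\,|u|^{q+1}\in L^1_{\mathrm{loc}}(\bar\Omega;(1+|x|)^{-\alpha})$ together with the corresponding boundary integrability of $b(u)$ (via trace continuity and $p-1\le\beta-\alpha$), which gives (i). Claim (ii) will then follow from H\"older applied to $I_\varepsilon$, the pointwise bound $|\nabla\eta_\varepsilon|\le C/\varepsilon$, and the vanishing of $\int_{\{\varepsilon<|x'|<2\varepsilon\}}(1+|x|)^{-\alpha}|\nabla u|^p\,dx$ obtained from the same Caccioppoli bound applied to finer annuli. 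The principal obstacle will be the interpolation/absorption step: the dimensional hypothesis $N-d>pq/(q-p+1)$ provides just enough room to absorb the $\varepsilon^{-p}\cdot\varepsilon^{N-d}$ loss from the cutoff, but the strict bound on $q_1$ must be exploited carefully to convert the Caccioppoli estimate from merely bounded to genuinely vanishing; without this precise interaction, $I_\varepsilon$ would fail to die and removability would not follow.
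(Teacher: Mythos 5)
Your overall architecture (test with $\varphi$ times a cutoff vanishing near $\Gamma$, prove a Caccioppoli bound, show the cutoff error dies) matches the paper's Step 2, but the proposal has a genuine gap at its core: you never establish any a priori control of $u$ near $\Gamma$, and your Caccioppoli/absorption scheme cannot close under the stated hypothesis. Concretely, on the annulus $\{\varepsilon<|x'|<2\varepsilon\}$ you must handle $\varepsilon^{-p}\int |u|^p$, and the Young splitting you propose gives
$\varepsilon^{-p}|u|^{p}\le \delta |u|^{q+1}+C_\delta\,\varepsilon^{-p(q+1)/(q-p+1)}$
(note $q+1-p=q-p+1$), so the remainder integrates in $x'$ to order $\varepsilon^{\,N-d-p(q+1)/(q-p+1)}$. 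Since $p(q+1)/(q-p+1)>pq/(q-p+1)$, this exponent can be negative under the theorem's assumption $N-d>pq/(q-p+1)$, so the right-hand side blows up as $\varepsilon\to0$; your scheme would only work under the strictly stronger condition $N-d>p(q+1)/(q-p+1)$. (There is also the secondary point that $\delta|u|^{q+1}$ appears without the factor $\eta_\varepsilon^p$ and so cannot be absorbed into the left-hand side as written; that can be patched by raising the cutoff power, but the scaling obstruction cannot.) The hypothesis on $q_1$ does not rescue this: in the paper it controls the $x''$-factor $u_2^{q_1}$, not the $\varepsilon^{-p}$ versus annulus-volume balance in $x'$. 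Finally, removability as defined here requires $u\in W^{1,p}(\Omega;\vartheta_\alpha)\cap L^{\infty}(\Omega)$ and the identity \eqref{62} for all $\varphi\in W^{1,p}(\Omega;\vartheta_\alpha)\cap L^\infty(\Omega)$; your plan never proves the global boundedness of $u$.

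What the paper does, and what your outline is missing, is a two-stage a priori estimate that makes the borderline exponent work: first the pointwise blow-up bound $|u_1(x')|\le C|x'|^{-\tau}$ with $\tau$ arbitrarily close to $p/(q-p+1)$ (Proposition \ref{66}, proved by the level-set iteration of Lemma \ref{19} together with the Poincar\'e inequality \eqref{51} and the iteration Lemma \ref{64}); then a logarithmic test function $\bigl(\ln\max\{u/\Lambda(\rho),1\}\bigr)\psi_r^{\gamma}$ with $\gamma=pq/(q-p+1)$ (Lemma \ref{67}), where the integral $\int_r^{\sqrt r}t^{N-d-1-\gamma}\,dt$ converges precisely because $N-d>\gamma$, and the $q_1$ restriction together with $u_2\in L^{q_1}\cap L^\infty$ tames the logarithmic powers; this yields $u\in L^\infty(\Omega)$. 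Only with boundedness in hand does the Caccioppoli estimate \eqref{28} give $\int_{\{r\le|x'|\le2r\}}|\nabla u|^p\,d\vartheta_\alpha\le Cr^{N-d-p}$, summable over dyadic annuli, and then the cutoff-removal argument (your claim (ii)) goes through with an error of order $r^{(N-d-p)/p}$. You need to supply these a priori estimates (or an equivalent mechanism) before your plan can succeed at the stated exponent.
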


The hypothesis $\mu _1>0$, $p-1<\beta$ of Theorem \ref{23} differs from  $\mu _1\geq 0$, $p-1\leq \beta$ of Theorem \ref{77}, due to their distinct tools. The first result depend on the compactness of the trace operator and continuous embedding, see Propositions \ref{63}, \ref{68} and \ref{59}. Conversely, the  second result depend on Poincare inequality for bounded domains in $\mathbb{R}^{N-d}$, see \eqref{51}. Another consequences of the methods used in the proof of Theorems \ref{23} and  \ref{77} are the following results:

\begin{corollary} \label{61}
(Without the  boundary condition)  Let $\Omega = \mathbb{R}^{d} \times B _R$ and $\Gamma = \{x\in \mathbb{R} ^N \:|\: x_{d+1} = \cdots =x_N=0\}$, where $B_R=\{(x_{d+1}, $ $\ldots, x_N)\in \mathbb{R}^{N-d}\:|\: |(x_{d+1}, \ldots, x_N)| <R\}$, $R\in (0,\infty]$,  $N\geq 3$ and $d\in \{ 1, \ldots, N-2 \}$. If $d=0$, we write $\Omega = B_R$ and $\Gamma = \{x\in \mathbb{R} ^N \:|\: x=0\}$. Let $a : \mathbb{R} \rightarrow \mathbb{R} $ be a measurable functions locally bounded, satisfying  $a(u) \operatorname{sign} u\geq \mu _1 |u|^{p-1}$, $\forall u \in \mathbb{R}$. 
\begin{enumerate}
\item[(a)]  Assume that \ref{43} and \ref{45} are satisfied, $ N-d > \max \{\frac{pq}{q-p+1}, \frac{p^2}{p-1}\}$ and $\mu _1>0$. Under the assumption $(A_1)$, suppose that  $u \in W_{\operatorname{loc}  }^{1, p  }(\bar{\Omega}   \backslash \Gamma) \cap L_{\operatorname{loc}  }^{\infty}(\bar{\Omega }  \backslash \Gamma)$ is  a (weak) solution  in $\bar{\Omega}   \backslash \Gamma$ of the equation
\begin{equation}\label{271}
- \operatorname{div}[A( \nabla u )]  +  a(u) + g(u) = 0 \quad \text { in } \Omega .
\end{equation} 
Then the singularity of $u$ at $\Gamma$ is removable.

\item[(b)] Assume that \ref{43} and \ref{45} are satisfied, $N-d >\frac{pq}{q-p+1}$ and $\mu _1 \geq 0$. Under the assumption $(A_2)$, suppose that  $u \in W_{\operatorname{loc}  }^{1, p  }(\bar{\Omega}   \backslash \Gamma ; \vartheta _{\alpha}) \cap L_{\operatorname{loc}  }^{\infty}(\bar{\Omega}   \backslash \Gamma)$ is  a (weak) solution  in $\bar{\Omega}   \backslash \Gamma$ of the equation
\begin{equation}\label{71}
- \operatorname{div}[ (1+|x|)^{-\alpha} A(\nabla u)]  +  (1+|x|)^{-\alpha}a(u) + (1+|x|)^{-\alpha} b(u) = 0\quad \text { in } \Omega.
\end{equation} 
Then the singularity of $u$ at $\Gamma$ is removable.
\end{enumerate}
\end{corollary}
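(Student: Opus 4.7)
The strategy is to adapt the proofs of Theorems \ref{23} and \ref{77}, substituting the interior absorption term $a(u)$ for the boundary nonlinearity $b(u)$. Because the singular set $\Gamma$ now lies in the interior of $\bar\Omega$ (the domain being a cylinder over the full ball $B_R$ rather than over the half-ball $B^+_R$), cutoff test functions localized near $\Gamma$ have no trace on $\partial\Omega$, so integration by parts produces no boundary contribution: the boundary integrand from \eqref{1} simply does not appear in the weak formulation of \eqref{271} or \eqref{71}, and its coercivity role is taken over by the hypothesis $a(u)\operatorname{sign}(u)\geq \mu_1|u|^{p-1}$.

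For part (a), pick a radial cutoff $\phi_\varepsilon\in C^\infty(\mathbb{R}^{N-d})$ with $\phi_\varepsilon=0$ on $\{|x'|<\varepsilon\}$, $\phi_\varepsilon=1$ on $\{|x'|>2\varepsilon\}$ and $|\nabla\phi_\varepsilon|\leq C/\varepsilon$, together with a compactly supported $\eta\in C^\infty_c(\Omega)$ and the level-$k$ truncation $T_k u := \operatorname{sign}(u)\min\{|u|,k\}$. Testing \eqref{271} against $T_k u\,\phi_\varepsilon^p\,\eta^p$, integrating by parts, and applying \ref{43} together with the lower bound on $a$, we arrive at
$$
\mu\int_\Omega |\nabla T_k u|^p\phi_\varepsilon^p\eta^p + \mu_1\int_\Omega |T_k u|^p\phi_\varepsilon^p\eta^p + \int_\Omega |u|^q|T_k u|\phi_\varepsilon^p\eta^p \le C\int_{\{\varepsilon<|x'|<2\varepsilon\}} |u||\nabla u|^{p-1}|\nabla\phi_\varepsilon|\eta^p\,dx + \mathcal{R}_\eta,
$$
where $\mathcal{R}_\eta$ collects lower-order terms from $\nabla\eta$ supported uniformly away from $\Gamma$. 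Hypothesis $(A_1)$ controls $|u|$ pointwise by $Cm^\pm_{\varepsilon,2\varepsilon}(1+|x''|)^{-\delta}$ on the annular region $\{\varepsilon<|x'|<2\varepsilon\}$, and the continuous embeddings already used in the (interior parts of) Propositions \ref{63}, \ref{68} and \ref{59}, combined with the dimensional bound $N-d>\max\{pq/(q-p+1),p^2/(p-1)\}$, force the annular integral to vanish as $\varepsilon\to 0$. Letting $k\to\infty$ afterwards yields $u\in W^{1,p}_{\mathrm{loc}}(\bar\Omega)$ with $|u|^q$ and $a(u)$ in $L^1_{\mathrm{loc}}(\bar\Omega)$ across $\Gamma$; a density argument then shows that \eqref{271} holds against every $\varphi\in C^\infty_c(\Omega)$, which proves removability.

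For part (b), we repeat the scheme inside the weighted Sobolev space $W^{1,p}(\bar\Omega;\vartheta_\alpha)$: use the factorization $u(x)=u_1(x')u_2(x'')$ from $(A_2)$ and test \eqref{71} with $T_k u\,\phi_\varepsilon^p\,\eta^p$ multiplied by the weight $(1+|x|)^{-\alpha}$, so that Fubini reduces the annular integral to a one-dimensional estimate in the $x'$-variable. The Poincar\'e inequality \eqref{51} on bounded domains of $\mathbb{R}^{N-d}$ takes the place of the embedding argument, while the integrability hypotheses on $u_2$ listed in $(A_2)$ make the $x''$-integration converge. Since the boundary term of \eqref{1} has been absorbed into the interior term $a(u)$, only $\mu_1\geq 0$ and $\beta\geq p-1$ are required here.

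The principal obstacle, in both parts, is showing that the critical annular integral
$$
\int_{\{\varepsilon<|x'|<2\varepsilon\}\cap\operatorname{supp}\eta} |u||\nabla u|^{p-1}|\nabla\phi_\varepsilon|\,dx
$$
tends to zero as $\varepsilon\to 0$. This is precisely where the capacity-type dimensional hypothesis $N-d>pq/(q-p+1)$ enters, balancing the factor $\varepsilon^{-1}$ coming from $|\nabla\phi_\varepsilon|$ against the $(N-d)$-dimensional volume of the annulus via H\"older's inequality, while $(A_1)$ (respectively $(A_2)$) supplies the decay or integrability needed in the $x''$-direction. Once this vanishing is secured, the rest is a standard closing-up procedure mirroring Theorems \ref{23} and \ref{77}.
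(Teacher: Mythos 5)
Your overall outline (test with cutoffs vanishing near $\Gamma$, show the annular error term vanishes, then close up) matches the skeleton of the paper's argument, but the proposal leaves out exactly the steps that constitute the actual proof. The paper obtains the corollary by rerunning, for the boundary-free equation, the full chain of a priori estimates: first the pointwise decay $|u|\le C|x'|^{-\tau}$ with $\tau$ arbitrarily close to $p/(q-p+1)$ near $\Gamma$ (the analogues of Lemma \ref{19}/\ref{219}, stated as Lemmas \ref{69} and \ref{269}, proved by the level-set iteration ending with Lemma \ref{64}, with the interior term $a(u)\ge \mu_1|u|^{p-1}\operatorname{sign}u$ replacing the boundary term); then the $L^\infty$ bound via the logarithmic test function $\bigl(\ln\max\{u/\Lambda(\rho),1\}\bigr)\psi_r^{\gamma}$, where $N-d>\frac{pq}{q-p+1}$ enters (the boundary-free analogues of Lemmas \ref{67} and \ref{267}); and finally the Caccioppoli-type estimate $\int_{\{r\le|x'|\le 2r\}}|\nabla u|^p\le Cr^{N-d-p}$, summed over dyadic annuli, which is what makes the final cutoff error $\frac{C}{r}\int_{\{r\le|x'|\le 2r\}}|\nabla u|^{p-1}$ tend to zero (in case (a) via H\"older, which is precisely where $N-d>\frac{p^2}{p-1}$ is needed; in case (b) via the factorization of $(A_2)$).

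Your argument never produces these estimates, and without them the key claim fails. Hypothesis $(A_1)$ only bounds $u$ on the annulus $\{\varepsilon<|x'|<2\varepsilon\}$ by $C_0\,m^{\pm}_{\varepsilon,2\varepsilon}(1+|x''|)^{-\delta}$, where $m^{\pm}_{\varepsilon,2\varepsilon}$ is the sup of $u^{\pm}$ on that very annulus and may a priori blow up at any rate as $\varepsilon\to0$; likewise you have no bound on $\int_{\{\varepsilon<|x'|<2\varepsilon\}}|\nabla u|^{p}$ before the Caccioppoli step, so invoking ``embeddings plus the dimensional bound'' does not make the annular integral vanish --- establishing that is the whole content of Sections \ref{32}--\ref{33}, not a routine balancing of $\varepsilon^{-1}$ against the volume of the annulus. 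Moreover, removability in this paper means $u\in W^{1,p}(\Omega;\vartheta_\alpha)\cap L^{\infty}(\Omega)$ together with the weak identity for all $\varphi\in W^{1,p}(\Omega;\vartheta_\alpha)\cap L^{\infty}(\Omega)$; your truncation-and-density scheme, even if the annular term were controlled, would only give local integrability conclusions and tests against $C^\infty_c(\Omega)$, and it never proves $u\in L^{\infty}(\Omega)$ at all. You also list the hypothesis $N-d>\frac{p^2}{p-1}$ in part (a) without using it anywhere, which is a symptom of the missing gradient estimate: that condition exists precisely so that $r^{\frac{(p-1)(N-d-p)-p}{p}}\to 0$ in the final cutoff limit. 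To repair the proposal you must import (or reprove) the boundary-free versions of Propositions \ref{66}/\ref{266} and Lemmas \ref{67}/\ref{267}, and only then run your cutoff argument.
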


\begin{corollary}\label{24}
Suppose that \ref{43} and  \ref{45}  are satisfied, $p   -1 < \beta$, $N-d >\frac{pq}{q-p+1}$ and $\mu_1 >0$. Assume that $u\in W^{1,p} _{\operatorname{loc}}(\bar{\Omega} \backslash \Gamma) \cap L^\infty _{\operatorname{loc}} (\bar{\Omega} \backslash \Gamma )$ is a (weak) solution of \eqref{1} in $\bar{\Omega} \backslash \Gamma$, such that $\{x^{\prime} \:|\: x\in \operatorname{supp} u\}$ is a bounded set, $|\operatorname{supp} u \cap \{ |x^\prime| \leq r_1\} | \leq C r_1 ^{N -d}$ and
\begin{equation*}\label{25} 
 \int _{\operatorname{supp} u \cap \{r_1\leq |x^\prime|\leq r_2\}} |x^\prime| ^{-\frac{pq}{q-p+1}}\textnormal{d}x \leq C(r_2^{N-d-\frac{pq}{q-p+1}} - r_1^{N-d-\frac{pq}{q-p+1}})\quad \text { if } 0<r_1 <r_2< 1,
\end{equation*}
where   $C>0$ is a suitable constant. Then the singularity of $u$ at $\Gamma$ is removable.
\end{corollary}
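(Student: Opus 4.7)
The plan is to execute the Caccioppoli-plus-Hölder scheme that underlies Theorem~\ref{23}, with the support and weighted-integral hypotheses replacing the pointwise decay $(A_1)$ and the role played there by the trace/embedding machinery (Propositions~\ref{63},~\ref{68},~\ref{59}). For small $\varepsilon>0$ I would fix a radial cut-off depending only on $x'=(x_{d+1},\ldots,x_N)$, namely $\eta_\varepsilon(x)=\psi(|x'|/\varepsilon)^{\gamma}$, where $\psi\in C^\infty([0,\infty))$ equals $0$ on $[0,1]$ and $1$ on $[2,\infty)$, and $\gamma$ is taken large enough that the Hölder absorption below involves only non-negative powers of $\eta_\varepsilon$. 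Then $|\nabla\eta_\varepsilon|\le C|x'|^{-1}\chi_{\{\varepsilon\le|x'|\le 2\varepsilon\}}$, and on $\operatorname{supp}|\nabla\eta_\varepsilon|$ the inequality $|x'|\ge\varepsilon$ will be used to trade $|x'|^{-s}$ for the harmless factor $\varepsilon^{-s}$.

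Testing the weak formulation of \eqref{1} on $\bar{\Omega}\setminus\Gamma$ with $\varphi=u\eta_\varepsilon^{\gamma}$, invoking \ref{43} (ellipticity together with the sign conditions on $g$ and $b$) and Young's inequality to absorb half of $\int(1+|x|)^{-\alpha}|\nabla u|^p\eta_\varepsilon^{\gamma}$, one lands at the Caccioppoli-type estimate
\[
\int_{\Omega}(1+|x|)^{-\alpha}\bigl(|\nabla u|^p+|u|^{q+1}\bigr)\eta_\varepsilon^{\gamma}\,\mathrm{d}x+\mu_1\!\int_{\partial\Omega}(1+|x|)^{-\beta}|u|^{p}\eta_\varepsilon^{\gamma}\,\mathrm{d}\sigma\le C\!\int_{\Omega}(1+|x|)^{-\alpha}|u|^{p}\eta_\varepsilon^{\gamma-p}|\nabla\eta_\varepsilon|^{p}\,\mathrm{d}x.
\]
To the right-hand side I would apply Hölder with conjugate exponents $(q+1)/p$ and $(q+1)/(q-p+1)$: the factor $(\int(1+|x|)^{-\alpha}|u|^{q+1}\eta_\varepsilon^{\gamma})^{p/(q+1)}$ is absorbed into the left-hand side, while the remaining factor is dominated, after using $|\nabla\eta_\varepsilon|\le C|x'|^{-1}$ and the splitting $|x'|^{-p(q+1)/(q-p+1)}=|x'|^{-p/(q-p+1)}|x'|^{-pq/(q-p+1)}$ together with the shell bound $|x'|^{-p/(q-p+1)}\le\varepsilon^{-p/(q-p+1)}$, by a constant multiple of $\varepsilon^{-p/(q+1)}\bigl(\int_{\operatorname{supp}u\cap\{\varepsilon\le|x'|\le 2\varepsilon\}}|x'|^{-pq/(q-p+1)}\,\mathrm{d}x\bigr)^{(q-p+1)/(q+1)}$. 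The weighted-integral hypothesis then gives $\int_{\operatorname{supp}u\cap\{\varepsilon\le|x'|\le 2\varepsilon\}}|x'|^{-pq/(q-p+1)}\,\mathrm{d}x\le C\,\varepsilon^{N-d-pq/(q-p+1)}$, and since $N-d>pq/(q-p+1)$ the analogous estimate over $\{\varepsilon\le|x'|\le 1\}$ is uniformly finite; a dyadic accumulation of the shell estimates then yields a bound on $\int(1+|x|)^{-\alpha}(|\nabla u|^p+|u|^{q+1})\eta_\varepsilon^{\gamma}+\mu_1\int_{\partial\Omega}(1+|x|)^{-\beta}|u|^p\eta_\varepsilon^{\gamma}$ independent of $\varepsilon$.

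Passing $\varepsilon\to 0$ by monotone convergence places $u\in W^{1,p}(\Omega;(1+|x|)^{-\alpha})\cap L^{q+1}(\Omega;(1+|x|)^{-\alpha})$, i.e.\ $u$ belongs to the natural energy space for \eqref{1} on all of $\bar{\Omega}$. Removability is then concluded by a density argument: for an arbitrary $\phi\in C^\infty_c(\bar{\Omega})$ I test the known equation on $\bar{\Omega}\setminus\Gamma$ with $\phi\,\eta_\varepsilon^{\gamma}$; the only spurious contribution is
\[
\int_{\Omega}(1+|x|)^{-\alpha}A(\nabla u)\cdot\nabla\eta_\varepsilon\,\phi\,\eta_\varepsilon^{\gamma-1}\,\mathrm{d}x,
\]
which is bounded by $C\|\phi\|_\infty\bigl(\int(1+|x|)^{-\alpha}|\nabla u|^p\eta_\varepsilon^{\gamma}\bigr)^{(p-1)/p}\bigl(\int_{\operatorname{supp}u\cap\operatorname{supp}|\nabla\eta_\varepsilon|}|\nabla\eta_\varepsilon|^p\,\mathrm{d}x\bigr)^{1/p}$; the last factor is of order $\varepsilon^{(N-d-p)/p}$ by the density bound $|\operatorname{supp}u\cap\{|x'|\le 2\varepsilon\}|\le C\varepsilon^{N-d}$ and $|\nabla\eta_\varepsilon|\le C/\varepsilon$, and this vanishes since $N-d>pq/(q-p+1)\ge p$ (using $p>1$). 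The main obstacle in executing the plan is the Hölder bookkeeping in the second paragraph: the exponent produced naïvely on $|\nabla\eta_\varepsilon|$ is $p(q+1)/(q-p+1)$, while the integrability hypothesis supplies the weaker exponent $pq/(q-p+1)$, and the reconciliation is possible only because the shell inequality $|x'|\ge\varepsilon$ allows one to trade the excess $|x'|^{-p/(q-p+1)}$ for $\varepsilon^{-p/(q-p+1)}$, which is then absorbed by the geometric gain $\varepsilon^{N-d-pq/(q-p+1)}$ coming directly from the weighted-integral hypothesis.
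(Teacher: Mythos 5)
Your plan fails at its central estimate, and the failure is exactly at the exponent you flag in your last sentence. After the Hölder/Young absorption, the surviving right-hand side of your Caccioppoli inequality is of order $\varepsilon^{\,N-d-\frac{p(q+1)}{q-p+1}}$: trading the excess $|x'|^{-\frac{p}{q-p+1}}$ for $\varepsilon^{-\frac{p}{q-p+1}}$ and then invoking the weighted-integral hypothesis produces $\varepsilon^{-\frac{p}{q-p+1}}\cdot\varepsilon^{\,N-d-\frac{pq}{q-p+1}}$, which stays bounded as $\varepsilon\to 0$ only if $N-d\ge \frac{p(q+1)}{q-p+1}$ — strictly stronger than the assumed $N-d>\frac{pq}{q-p+1}$. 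In the range $\frac{pq}{q-p+1}<N-d<\frac{p(q+1)}{q-p+1}$ your single-shell bound blows up, and the ``dyadic accumulation'' cannot repair this, because each scale produces its own divergent right-hand side rather than a summable one. This is the classical deficit of the pure energy method (test function $u$ times a cutoff reaches only the exponent $\frac{p(q+1)}{q-p+1}$); attaining the stated threshold $\frac{pq}{q-p+1}$ requires the Brezis--V\'eron/Skrypnik logarithmic device, which your proposal omits entirely.

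That device is precisely what the paper's method (to which the corollary is referred) supplies: one first proves $\max\{u,0\}\in L^\infty(\Omega)$ (and similarly for $-u$, via \ref{45}) by testing with $\varphi=\big(\ln\max\{u/\Lambda(\rho),1\}\big)\psi_r^\gamma$, $\gamma=\frac{pq}{q-p+1}$, as in Lemmas \ref{67} and \ref{267}; the corollary's integral hypothesis is tailored to exactly this step, since it bounds $\int_{\operatorname{supp}u\cap\{r\le|x'|\le\sqrt r\}}|x'|^{-\gamma}\,\textnormal{d}x$ by $C\big(r^{\frac{N-d-\gamma}{2}}-r^{N-d-\gamma}\big)$, replacing the role that $(A_1)$/$(A_2)$ and the decay estimates of Propositions \ref{66} and \ref{266} play there. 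Once $u\in L^\infty(\Omega)$ is known, the Caccioppoli right-hand side is simply bounded by $\|u\|_\infty^{p}\,\varepsilon^{-p}\,|\operatorname{supp}u\cap\{\varepsilon\le|x'|\le 2\varepsilon\}|\le C\varepsilon^{N-d-p}$ using the support-measure hypothesis, and since $N-d>\frac{pq}{q-p+1}>p$ (as $p>1$) this is summable over dyadic shells exactly as in \eqref{28}; your final cutoff-limit argument is then sound. Note finally that $L^\infty(\Omega)$ is part of the paper's definition of a removable singularity, so even if your energy estimate closed, the conclusion you reach ($u\in W^{1,p}\cap L^{q+1}$) would still fall short of the statement.
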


Observe that  $(A_1)$ and  $(A_2)$ are not assumed in the  corollary above.

\begin{remark}
Applying the method used in the results above, the same results are valid for  weak solutions $u\in W^{1,p} (\Omega ; (1+|x^{\prime \prime}|)^{-\alpha})$ of nonlinear elliptic equations in the form:
\begin{equation*} 
\left\{
\begin{aligned}
-\operatorname{div} \left[ (1+|x^{\prime \prime}|) ^{-\alpha} A(\nabla u) \right]  +(1+ |x^{\prime \prime}|) ^{- \alpha }g(u)& =0 & & \text { in } \Omega,\\
 (1+|x^{\prime \prime}|) ^{-\alpha} A(\nabla u) \cdot \nu   + + (1+|x|)^{ -\beta }b(u)  &= 0 & & \text { on }  \partial \Omega,
\end{aligned}
\right.
\end{equation*}
and 
\begin{equation*} 
-\operatorname{div} \left[ (1+|x^{\prime \prime}|) ^{-\alpha} A(\nabla u) \right]  + (1+|x^{\prime \prime}|)^{ -\alpha }a(u)  +(1+ |x^{\prime \prime}|) ^{- \alpha }g(u) =0 \quad \text { in } \Omega.
\end{equation*}
Observe that $W^{1,p} (\Omega ; (1+|x^{\prime \prime}|)^{-\alpha}) \subset W^{1,p} (\Omega ; (1+|x|)^{-\alpha})$, then from $ W^{1,p} (\Omega ; (1+|x|)^{-\alpha}) \rightarrow L^{p} (\partial \Omega ; (1+|x|)^{-\beta}) $ we have the trace $u|_{\partial \Omega}$ has definite meaning for all $u\in W^{1,p} (\Omega ; (1+|x^{\prime \prime}|)^{-\alpha})$. Also, it is important to note that near the singular set, we have $(1+|x^{\prime \prime}|)^{-\alpha}\leq C (1+|x|)^{-\alpha}$ in $\{|x^{\prime}|<r_0\}$, $r_0>0$, for some positive constant $C$.
\end{remark}

This paper is organized as follows. In Section \ref{31}  we gather preliminary definitions and results,  which is used several times in the paper, like weak solutions, embedding theorem, and equivalent norms in Sobolev spaces. In Section \ref{32} we prove  Proposition \ref{66} and \ref{266}, crucial to our main results, these propositions deal with the behavior of subsolutions near the singular set. In Section \ref{33} the proof of Theorems \ref{23} and  \ref{77} is given, here we prove the boundedness of weak solutions and provide an estimate of the $L^p$-norm of the gradient.

\section{Preliminaries} \label{31}

In order to simplify the representation we will denote $\vartheta _{\alpha} (x) = (1+|x|)^{-\alpha}$, $ \textnormal{d}\vartheta _{\alpha} = \vartheta _{\alpha} \textnormal{d}x $  and $\vartheta _{\alpha} (F) = \int _F \vartheta _{\alpha} \textnormal{d}x$, where $F$ is a measurable set. Let $U\subset \mathbb{R}^{N}$ be an open set and  $p >1$, define
$$
L^{p}(U ; \vartheta _{\alpha})=\left\{u \:|\: u : \Omega \rightarrow \mathbb{R} \text { is  measurable function   and } \int_{U } |u|^{p} \textnormal{d}\vartheta _{\alpha} <\infty\right\},
$$
and
$$
W^{1, p  }(U  ; \vartheta _{\alpha} )=\left\{u \in L^{p  }\left(U   ; \vartheta _{\alpha}\right)\:|\:  |\nabla u| \in L^{p  }(U  ; \vartheta _{\alpha} )\right\},
$$
equipped with the norms
$$
\|u\|_{p  , U  , \alpha }=\left( \int_{U} |u|^{p} \textnormal{d}\vartheta _{\alpha}  \right)^{\frac{1}{p}} \quad \text { and } \quad  \|u\|_{1, p  , U  , \alpha }=\|u\|_{p  , U  , \alpha} + \|\nabla u\|_{p  , U , \alpha}, 
$$
respectively.  For details on these spaces see \cite{horiuchi1989imbedding, gurka1991continuous, pfluger1998compact, liu2008compact}.

We work with the following spaces
\begin{align*}
L_{\operatorname{loc} }^{p  }(\bar{\Omega} \backslash  \Gamma  ; \vartheta _{\alpha})
=\{u: \Omega \rightarrow \mathbb{R} \:|\: u \in L^{p  }(U  ; \vartheta _{\alpha}) \text {  for all open subset } U \subset \Omega   \text { so that }  \bar{U} \cap \Gamma=\emptyset \}
\end{align*}
and 
\begin{align*}
W_{\operatorname{loc} }^{1, p  }(\bar{\Omega} \backslash  \Gamma ; \vartheta _{\alpha} )
=\{u: \Omega \rightarrow \mathbb{R} \:|\: u \in W^{1, p  }(U ; \vartheta _{\alpha} ) \text { for all open subset } U \subset \Omega  \text { so that }   \bar{U} \cap \Gamma=\emptyset  \}.
\end{align*}

Similarly we define $L_{\operatorname{loc} }^{\infty}(\bar{\Omega} \backslash \Gamma)$. 

\begin{definition}
 We will say that $u \in W_{\operatorname{loc}}^{1, p}(\bar{\Omega} \backslash \Gamma ; \vartheta _{\alpha} ) \cap L_{\operatorname{loc}}^{\infty}(\bar{\Omega} \backslash \Gamma)$ is a (weak) solution of  \eqref{1} in $\bar{\Omega} \backslash \Gamma$ if
\begin{equation}\label{62}
\int_{\Omega}\left(\langle A(  \nabla u), \nabla \varphi\rangle  +  g( u) \varphi \right) \textnormal{d}\vartheta _{\alpha } + \int_{\partial \Omega} b( u)   \varphi \textnormal{d}\vartheta _{\beta}=0,
\end{equation}
for all $\varphi \in W_{\operatorname{loc}}^{1, p }(\bar{\Omega} \backslash \Gamma ; \vartheta _{\alpha}  ) \cap L_{\operatorname{loc}}^{\infty}(\bar{\Omega} \backslash \Gamma)$, such that $\operatorname{supp} \varphi \subset \bar{\Omega} \backslash \Gamma$.
\end{definition}

Let us observe that the trace of $u \in W_{\operatorname{loc} }^{1, p }(\bar{\Omega}   \backslash \Gamma ; \vartheta _{\alpha}) \cap L_{\operatorname{loc}  }^{\infty}(\bar{\Omega}   \backslash \Gamma)$ may not be defined on $\partial \Omega  $, however it is defined on $\{x \in \partial \Omega   \:|\: |x^\prime|  >r\}$ and is essentially bounded for small enough values $r>0$.
\begin{definition}
 We will say that the solution $u$ of  \eqref{1} in $\bar{\Omega} \backslash \Gamma$ has a removable singularity at $\Gamma$ if $u \in W^{1, p  }(\Omega ; \vartheta _{\alpha} ) \cap L^{\infty}(\Omega)$ and the equality \eqref{62} is fulfilled for all $\varphi \in W^{1, p  }(\Omega   ;  \vartheta _{\alpha} ) \cap L^{\infty}(\Omega)$.
 \end{definition}

We finish this section by collecting some facts that we will utilize  throughout this paper.
\begin{proposition} \label{63} (see \cite{pfluger1998compact, liu2008compact})
Suppose  that $p \in (1,N)$, $q\geq p$,  $ \alpha  , \beta \in [0, N)$  and $0 \leq  \frac{N-1}{q} - \frac{N}{p} +1 \leq  \frac{\beta}{q} - \frac{\alpha }{p}$.  Then  the trace operator $ W^{1,p} (\Omega ; \vartheta _{\alpha}) \rightarrow L^q (\partial \Omega  ; \vartheta _{\beta})$ is continuous. If in addition $0 \leq  \frac{N-1}{q} - \frac{N}{p} +1 <   \frac{\beta}{q} - \frac{\alpha }{p}$, then  the trace operator is compact.
\end{proposition}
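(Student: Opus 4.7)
The plan is to reduce the weighted trace estimate to the classical unweighted one by dyadic decomposition and rescaling. On each annular region $A_k = \Omega \cap \{2^k \leq |x| < 2^{k+1}\}$ with $k \geq 0$, the weights $\vartheta_\alpha$ and $\vartheta_\beta$ are comparable to $2^{-k\alpha}$ and $2^{-k\beta}$, respectively, so after a $2^{-k}$ rescaling the problem on $A_k$ reduces to an unweighted trace inequality on a piece of uniformly bounded geometry inside $\{1 \leq |y| < 2\}$. The bounded central piece $\Omega_0 = \Omega \cap \{|x| \leq 1\}$ is handled separately: there the weights are bounded above and below by positive constants, so the classical trace and Rellich--Kondrachov theorems apply directly.

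Setting $r = 2^k$ and $\tilde u(y) = u(ry)$, the unweighted trace theorem on $\tilde A_k = r^{-1} A_k$ reads $\|\tilde u\|_{L^q(\partial \tilde A_k)}^q \leq C\bigl(\|\tilde u\|_{L^p(\tilde A_k)}^p + \|\nabla \tilde u\|_{L^p(\tilde A_k)}^p\bigr)^{q/p}$, uniformly in $k$, provided $q \leq p(N-1)/(N-p)$, i.e.\ $\tfrac{N-1}{q} - \tfrac{N}{p} + 1 \geq 0$. Changing variables back and using $\vartheta_\alpha(x) \sim r^{-\alpha}$ on $A_k$ and $\vartheta_\beta(x) \sim r^{-\beta}$ on $\partial A_k$, a direct power count yields
\[
\int_{\partial A_k} |u|^q \, \textnormal{d}\vartheta_\beta \leq C\, r^{E_1} \Bigl(\int_{A_k} |u|^p \, \textnormal{d}\vartheta_\alpha\Bigr)^{q/p} + C\, r^{E_2} \Bigl(\int_{A_k} |\nabla u|^p \, \textnormal{d}\vartheta_\alpha\Bigr)^{q/p},
\]
with $E_2 = q\bigl(\tfrac{N-1}{q} - \tfrac{\beta}{q} - \tfrac{N}{p} + \tfrac{\alpha}{p} + 1\bigr)$ and $E_1 = E_2 - q$. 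The structural hypothesis $\tfrac{N-1}{q} - \tfrac{N}{p} + 1 \leq \tfrac{\beta}{q} - \tfrac{\alpha}{p}$ is precisely what forces $E_2 \leq 0$, so both exponents are nonpositive for $r \geq 1$. Summing over $k \geq 0$ via $\sum_k a_k^{q/p} \leq (\sum_k a_k)^{q/p}$ (valid since $q/p \geq 1$) and combining with the trace inequality on $\Omega_0$ gives the continuous embedding.

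For compactness under the strict inequality, $E_2 < 0$, so the tail
\[
\sum_{k \geq K} \int_{\partial A_k} |u|^q \, \textnormal{d}\vartheta_\beta \leq C\, 2^{K E_2}\, \|u\|_{W^{1,p}(\Omega;\vartheta_\alpha)}^q
\]
can be made arbitrarily small uniformly on bounded sequences. On each truncation $\Omega \cap \{|x| \leq 2^K\}$ the weights are comparable to positive constants, so Rellich--Kondrachov plus compactness of the unweighted trace operator supplies an $L^q$-convergent subsequence, and a diagonal extraction over $K \to \infty$ completes the argument.

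The main obstacle is the borderline subcase $q = p(N-1)/(N-p)$, where the unweighted trace on a fixed bounded domain is continuous but not compact, so the truncation step fails at face value. It is resolved using the \emph{strict} gap in the weight inequality: choose $q_1 < q$ close to $q$ so that $\tfrac{N-1}{q_1} - \tfrac{N}{p} + 1 > 0$ while the weighted condition remains strict for $q_1$, apply the subcritical compactness to extract convergence in $L^{q_1}(\partial\Omega;\vartheta_\beta)$, and interpolate with the uniform $L^q$-bound already established to upgrade to convergence in $L^q(\partial\Omega;\vartheta_\beta)$.
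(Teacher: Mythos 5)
The paper itself offers no proof of this proposition; it is quoted from Pfl\"uger and Liu, whose arguments rest on decompositions of the type you use, but with the geometry handled correctly. Your scheme as written has a genuine gap in the only geometrically nontrivial case, namely $R<\infty$ and $d\ge 1$, when $\Omega=\mathbb{R}^d\times B_R^+$ is an infinite cylinder with bounded cross-section. Dilating $A_k=\Omega\cap\{2^k\le |x|<2^{k+1}\}$ by $2^{-k}$ does \emph{not} produce domains of uniformly bounded geometry: the cross-section shrinks to thickness $2^{-k}R$, the rescaled pieces degenerate to thin slabs, and the unweighted trace constants on them blow up. Concretely, your displayed per-annulus inequality fails there: testing with $u\equiv 1$ on $A_k$ (so the gradient term drops out), the left side is of order $r^{d-\beta}$ while the right side is of order $r^{E_1+(d-\alpha)q/p}=r^{N-1-\beta-q(N-d)/p}$, and the inequality would force $q\le p(N-1-d)/(N-d)$, contradicting $q\ge p$. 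The repair is to use the actual symmetry of the cylinder: decompose into congruent unit pieces by translation along the $\mathbb{R}^d$ factor, on each of which $\vartheta_\alpha$ and $\vartheta_\beta$ are comparable to constants $\rho^{-\alpha}$ and $\rho^{-\beta}$, apply the unweighted trace inequality with a translation-invariant constant, and sum using $\beta/q\ge\alpha/p$, which follows from the hypothesis because $\frac{N-1}{q}-\frac{N}{p}+1\ge 0$. Your scaling computation is correct, and suffices, precisely in the scale-invariant case $R=\infty$, where $E_2\le 0$ is exactly the stated condition.

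The second genuine problem is your treatment of the borderline case $q=p(N-1)/(N-p)$ in the compactness part: convergence in $L^{q_1}(\partial\Omega;\vartheta_\beta)$ for some $q_1<q$ together with a uniform bound in $L^{q}$ yields strong convergence only in $L^{s}$ for $s<q$; to upgrade to $L^q$ one would need a uniform bound in an exponent strictly above $q$, which is not available at the critical exponent. In fact no interpolation trick can work: concentrating a standard trace-critical bubble at a fixed boundary point, where both weights are pinched between positive constants, gives a bounded sequence in $W^{1,p}(\Omega;\vartheta_\alpha)$ whose traces converge weakly but not strongly in $L^q(\partial\Omega;\vartheta_\beta)$, so the trace operator is not compact at criticality regardless of the strict weight inequality. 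The compactness assertion should therefore be read, as in the cited references and as it is actually used in the paper (only with $q=p$, which is strictly subcritical), for $q$ strictly below the critical trace exponent; for such $q$ your tail estimate plus local compactness argument is sound, modulo the same correction of the annulus estimate in the cylinder case.
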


\begin{proposition} \label{68}
Assume that $p \in (1,N)$, $q\geq p$,  $ \alpha  , \beta \in [0, N)$  and $0 \leq  \frac{N-1}{q} - \frac{N}{p} +1 < \frac{\beta}{q} - \frac{\alpha }{p}$.  For any $u\in W^{1,p} (\Omega ; \vartheta _{\alpha})$, let
$$
\|u\|_{\partial}=  \|u\|_{ p, \partial \Omega , \beta} + \|\nabla u\|_{p, \Omega  , \alpha }.
$$
Then $\|u\|_{\partial}$ is a norm on $W^{1, p }(\Omega ; \vartheta _{\alpha})$ which is equivalent to
$$
\|u\|_{1, p, \Omega  , \alpha }=\|u\|_{p, \Omega  , \alpha } + \|\nabla u\|_{p, \Omega  , \alpha} .
$$
\end{proposition}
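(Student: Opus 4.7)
The plan is to establish the two one-sided inequalities that together give the equivalence; the triangle inequality and positive homogeneity of $\|\cdot\|_{\partial}$ are automatic, and definiteness ($\|u\|_\partial = 0 \Rightarrow u = 0$) will be a byproduct of the equivalence. For the easy direction $\|u\|_\partial \leq C\|u\|_{1,p,\Omega,\alpha}$, I would simply apply Proposition~\ref{63} with $q = p$: the trace-continuity hypothesis there becomes $\frac{p-1}{p} < \frac{\beta-\alpha}{p}$, i.e.\ $\beta - \alpha > p - 1$, which is exactly the hypothesis of Proposition~\ref{68}. This yields $\|u\|_{p,\partial\Omega,\beta} \leq C\|u\|_{1,p,\Omega,\alpha}$, hence $\|u\|_\partial \leq (C+1)\|u\|_{1,p,\Omega,\alpha}$.

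For the reverse direction $\|u\|_{p,\Omega,\alpha} \leq C\bigl(\|u\|_{p,\partial\Omega,\beta} + \|\nabla u\|_{p,\Omega,\alpha}\bigr)$ I would argue by contradiction and exploit the \emph{compactness} of the trace operator provided by the same proposition. Suppose no such constant exists; after normalization one obtains a sequence $(u_n) \subset W^{1,p}(\Omega;\vartheta_\alpha)$ with $\|u_n\|_{p,\Omega,\alpha}=1$ and $\|u_n\|_{p,\partial\Omega,\beta}+\|\nabla u_n\|_{p,\Omega,\alpha}\to 0$. Since $(u_n)$ is bounded in $W^{1,p}(\Omega;\vartheta_\alpha)$, passing to a subsequence we may assume $u_n \rightharpoonup u$ weakly in $W^{1,p}(\Omega;\vartheta_\alpha)$. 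Compactness of the trace operator gives $u_n|_{\partial\Omega} \to u|_{\partial\Omega}$ strongly in $L^p(\partial\Omega;\vartheta_\beta)$, which together with $\|u_n|_{\partial\Omega}\|_{p,\partial\Omega,\beta}\to 0$ forces $u|_{\partial\Omega}=0$. The weak convergence $\nabla u_n \rightharpoonup \nabla u$ in $L^p(\Omega;\vartheta_\alpha)$ combined with $\|\nabla u_n\|_{p,\Omega,\alpha}\to 0$ yields $\nabla u = 0$ a.e., so $u$ is constant on the connected domain $\Omega$; the vanishing trace then forces $u \equiv 0$ (if $\vartheta_\alpha(\Omega)=\infty$, constants do not even lie in $L^p(\Omega;\vartheta_\alpha)$, so the weak limit must already be zero).

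The step I expect to be the main obstacle is closing the contradiction, which requires strong convergence $u_n \to 0$ in $L^p(\Omega;\vartheta_\alpha)$, since $\Omega = \mathbb{R}^d \times B^+_R$ is unbounded in the $x''$-direction and there is no gratuitous Rellich--Kondrachov compactness for $W^{1,p}(\Omega;\vartheta_\alpha) \hookrightarrow L^p(\Omega;\vartheta_\alpha)$. My plan is to handle this by localization plus tail control: for each $M>0$, put $\Omega_M = \Omega \cap \{|x''|<M\}$, which is a bounded Lipschitz subdomain on which $\vartheta_\alpha$ is bounded above and below, so the classical Rellich--Kondrachov theorem gives $u_n \to 0$ strongly in $L^p(\Omega_M;\vartheta_\alpha)$. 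For the complementary tail $\int_{\Omega \setminus \Omega_M}|u_n|^p\, \mathrm{d}\vartheta_\alpha$, I would use the decay $\vartheta_\alpha(x) \leq (1+M)^{-\eta}\vartheta_{\alpha-\eta}(x)$ on $\{|x''|\geq M\}$ for a small $\eta > 0$, combined with the uniform bound on $\|u_n\|_{W^{1,p}(\Omega;\vartheta_\alpha)}$ and the strict inequality $\beta - \alpha > p - 1$, to obtain a tail bound that goes to zero as $M \to \infty$ uniformly in $n$. An alternative, avoiding these ad hoc estimates, is to cite directly the compact embedding results for weighted Sobolev spaces on unbounded domains in \cite{pfluger1998compact, liu2008compact}, whose hypotheses cover the present setting. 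Either way, $u_n \to 0$ strongly in $L^p(\Omega;\vartheta_\alpha)$ contradicts $\|u_n\|_{p,\Omega,\alpha}=1$, finishing the proof.
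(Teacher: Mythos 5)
Your argument coincides with the paper's own proof up to the decisive point: the easy inequality via Proposition \ref{63} with $q=p$, the normalization $\|u_n\|_{p,\Omega,\alpha}=1$ with $\|\nabla u_n\|_{p,\Omega,\alpha}+\|u_n\|_{p,\partial\Omega,\beta}\to 0$, the weak limit $u_0$, local compactness on $\Omega_m=\Omega\cap\{|x|<m\}$, the conclusion that $u_0$ is constant, and then $u_0\equiv 0$ via the compact trace --- this is exactly the scheme of the paper (modelled on \cite{deng2009positive}). At that point the paper simply declares a contradiction with $\|u_n\|_{p,\Omega,\alpha}=1$, whereas you correctly observe that weak convergence to $0$ plus unit norm is not by itself contradictory and that what is really needed is strong convergence of $u_n$ to $0$ in $L^p(\Omega;\vartheta_\alpha)$; your diagnosis of where the difficulty sits is thus sharper than the paper's write-up.

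However, neither of your proposed repairs closes that gap. The tail estimate points the wrong way: on $\{|x''|\ge M\}$ the bound $\vartheta_\alpha\le (1+M)^{-\eta}\vartheta_{\alpha-\eta}$ converts the tail of $\int |u_n|^p\,\vartheta_\alpha\,\mathrm{d}x$ into a multiple of $\int |u_n|^p\,\vartheta_{\alpha-\eta}\,\mathrm{d}x$, i.e.\ a norm with a \emph{larger} weight than the one you control, and boundedness in $W^{1,p}(\Omega;\vartheta_\alpha)$ gives no such bound; indeed it cannot give uniform tail smallness at all, since translating a fixed bump to distance $T$ in an unbounded $x''$-direction and renormalizing by $(1+T)^{\alpha/p}$ yields a sequence bounded in $W^{1,p}(\Omega;\vartheta_\alpha)$ whose weighted $L^p$ mass lies entirely in the tail. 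The same example shows that the embedding $W^{1,p}(\Omega;\vartheta_\alpha)\hookrightarrow L^p(\Omega;\vartheta_\alpha)$ (same weight on both sides) is \emph{not} compact on this unbounded cylinder, so the fallback of citing \cite{pfluger1998compact,liu2008compact} does not help either: those results give compactness of the \emph{trace} operator, which you already use, not of the interior embedding, and Proposition \ref{59} only permits target weights with $\alpha_0\ge\alpha$ (at best continuity, with more decay, not less). Any genuine completion must exploit the extra vanishing information quantitatively at infinity --- for instance a Friedrichs/trace inequality on the slabs $\{k\le |x''|\le k+1\}$ with constants uniform in $k$ (integrating from $\partial\Omega$ across the bounded cross-section and tracking the weights, which is where the relation between $\alpha$ and $\beta$ enters), which after summation gives $\|u_n\|_{p,\Omega,\alpha}\to 0$ directly and in fact makes the compactness detour unnecessary; alternatively, an embedding into a strictly faster decaying weight combined with H\"older on the tail. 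As written, the step you yourself flagged as the main obstacle is the one that fails.
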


\begin{proof}
Follows of steps of the proof of \cite[Theorem 2.1]{deng2009positive}. Indeed, by Proposition \ref{63}, there exists $C>0$ such that $\|u\|_{\partial} \leq C\|u\|_{1,p,\Omega,\alpha}$, for every $u\in W^{1,p}(\Omega,\vartheta _{\alpha})$. Next we will prove 
$$
\|u\|_{p,\Omega,\alpha} \leq C_1\|u\|_{\partial},
$$
for a suitable constant $C_1>0$. Arguing by contradiction, there is a sequence $(u_n) \subset  W^{1, p}(\Omega , \vartheta _{\alpha})$ such that
$$
n\|u_n\|_{\partial}<\|u_n\|_{p,\Omega,\alpha} .
$$
We can assume that $\|u_n\|_{p,\Omega,\alpha}=1$ for all $n$. Then $\left\|u_n\right\|_{\partial} \rightarrow 0$, which implies
\begin{equation*}\label{49}
\|\nabla u_n\|_{p,\Omega,\alpha} \rightarrow 0 \quad \text { and } \quad \|u_n\|_{p,\partial \Omega,\beta} \rightarrow 0.
\end{equation*}
Consequently, $(\|u_n\|_{1,p,\Omega,\alpha})$ is bounded. By the reflexivity of $W^{1, p}(\Omega , \vartheta _{\alpha})$, there exists a subsequence, still denoted by $(u_n)$,  such that
\begin{equation}\label{35}
u_n \rightharpoonup u_0 \quad\text{in}\quad W^{1, p}(\Omega,\vartheta _{\alpha}) .
\end{equation}
Furthermore, due to the compact embedding theorem, we can assume that
\begin{equation}\label{41}
u_n \rightarrow u_0, \quad\text{in}\quad L^{p}(\Omega _m , \vartheta _{\alpha}) \text { for every } m,
\end{equation}
where $\Omega _m = \Omega \cap \{|x| < m\} $, $m \in \mathbb{N}$. From \eqref{35} and \eqref{41}, we have $ \nabla u_n \rightharpoonup \nabla u_0 $ in $(L^{p}(\Omega _m , \vartheta _{\alpha}))^N$. Thus,
$$
\|\nabla u_0\|_{p , \Omega _m , \alpha} \leq \liminf _{n \rightarrow \infty}\|\nabla u_n\|_{p,\Omega _m,\alpha} =0.
$$
Therefore, $u_0\equiv \operatorname{constant}$. Since $W^{1, p}(\Omega , \vartheta _{\alpha}) \rightarrow L^{p}(\partial \Omega , \vartheta _{\beta})$ is compact, we have
$$
\|u_0\|_{p,\partial \Omega,\beta} \leq \liminf _{n \rightarrow \infty}\|u_n\|_{p,\partial \Omega,\beta} =0,
$$
then, $u_0 \equiv 0$. Which is contradiction, because $\|u_n\|_{p,\Omega,\alpha}=1$, for every  $n$. This concludes the proof. \end{proof}

\begin{proposition} \label{59}  (see  \cite[Theorem 3]{horiuchi1989imbedding}) Suppose that $p \in (1,N)$, $q\geq p$,  $0\leq \frac{N}{p} - \frac{N}{q} \leq 1+ \frac{\alpha}{p} - \frac{\alpha _0}{q} < \frac{N}{p}$ and  $\frac{N}{q} \geq \frac{\alpha _0}{q} \geq \frac{\alpha}{p} \geq 0$. Then there is a continuous embedding $W^{1,p} (\Omega ; \vartheta _{\alpha}) \rightarrow L^{q} (\Omega ; \vartheta _{\alpha _0})$.
\end{proposition}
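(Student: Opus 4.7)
The plan is to reduce the weighted embedding on $\Omega=\mathbb{R}^d\times B^+_R$ to the corresponding embedding on all of $\mathbb{R}^N$, and then to prove the latter via a Riesz-potential representation combined with a two-weight Hardy--Littlewood--Sobolev inequality. First I would construct an extension operator $E:W^{1,p}(\Omega;\vartheta_\alpha)\to W^{1,p}(\mathbb{R}^N;\vartheta_\alpha)$. Reflection across the flat piece of $\partial\Omega$ preserves $|x|$ and hence the radial weight $\vartheta_\alpha$; a standard Stein-type extension handles the spherical portion $\{|x'|=R\}$ of $\partial B^+_R$. This yields $\|Eu\|_{1,p,\mathbb{R}^N,\alpha}\leq C\|u\|_{1,p,\Omega,\alpha}$.

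For $\tilde u=Eu$ I would then use the pointwise bound $|\tilde u(x)|\leq c_N(I_1|\nabla\tilde u|)(x)$, where $I_1 f(x)=\int_{\mathbb{R}^N}|x-y|^{1-N}f(y)\,\mathrm{d}y$ is the Riesz potential of order one (valid by a density/truncation argument). The embedding would then follow from the two-weight inequality
\[
\|I_1 f\|_{L^q(\mathbb{R}^N;\vartheta_{\alpha_0})}\leq C\|f\|_{L^p(\mathbb{R}^N;\vartheta_\alpha)}
\]
applied to $f=|\nabla\tilde u|$.

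The main obstacle will be establishing this last inequality, since $\vartheta_\alpha=(1+|x|)^{-\alpha}$ is neither a pure power nor bounded above and below by positive constants, so the classical Stein--Weiss theorem does not apply directly. I would attack it by a dyadic decomposition $\mathbb{R}^N=A_0\cup\bigcup_{k\geq 1}A_k$, with $A_0=\{|x|\leq 1\}$ and $A_k=\{2^{k-1}\leq|x|<2^k\}$, on which the weights are essentially constant. On $A_0$ the weights are comparable to $1$ and the classical unweighted Sobolev embedding suffices, since the scaling hypothesis together with $\frac{\alpha_0}{q}\geq\frac{\alpha}{p}$ gives $\frac{N}{p}-\frac{N}{q}\leq 1$. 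On each $A_k$ with $k\geq 1$, the weights are frozen at $2^{-k\alpha}$ and $2^{-k\alpha_0}$, so the Stein--Weiss inequality for pure power weights gives the local estimate. The sharp scaling relation $\frac{N}{p}-\frac{N}{q}\leq 1+\frac{\alpha}{p}-\frac{\alpha_0}{q}$ is precisely what makes the resulting geometric series over annuli summable, while $\frac{\alpha_0}{q}\geq\frac{\alpha}{p}\geq 0$ supplies the correct monotonicity between input and output weights. The most delicate point is controlling the cross-contributions $\int_{A_j}|x-y|^{1-N}f(y)\,\mathrm{d}y$ for $x\in A_k$ and $j\neq k$ uniformly in $|j-k|$, which uses the fact that $|x-y|\gtrsim 2^{\max(j,k)}$ for such non-adjacent pairs and a second application of the scaling hypothesis.
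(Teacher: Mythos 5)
Your reduction has a genuine gap at its core: once you replace $|\tilde u|$ by $c_N I_1|\nabla\tilde u|$ you have discarded the zero--order part of the norm, so you are committed to the homogeneous two--weight inequality $\|I_1 f\|_{L^q(\mathbb{R}^N;\vartheta_{\alpha_0})}\leq C\|f\|_{L^p(\mathbb{R}^N;\vartheta_{\alpha})}$, and that inequality is false on part of the admissible parameter range. The simplest instance: $p=q$, $\alpha=\alpha_0=0$ satisfies every hypothesis of the proposition (which then asserts only the trivial inclusion $W^{1,p}\subset L^p$), yet $I_1$ is not bounded from $L^p(\mathbb{R}^N)$ to $L^p(\mathbb{R}^N)$, as a dilation $f_\lambda(x)=f(\lambda x)$ shows; weighted variants (e.g.\ $p=q$, $\alpha=0$, $\alpha_0$ small, with bumps translated to $|x|\sim 2^j$) fail as well. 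You can see the failure inside your own dyadic scheme: for $f$ supported in $A_k$ one only has $\|I_1 f\|_{L^q(A_k)}\lesssim 2^{k(1-N/p+N/q)}\|f\|_{L^p(A_k)}$, so after freezing the weights the diagonal block constant is of size $2^{k[(1+\alpha/p-\alpha_0/q)-(N/p-N/q)]}$, and the hypothesis $N/p-N/q\leq 1+\alpha/p-\alpha_0/q$ makes this exponent \emph{nonnegative}: the constants grow in $k$ whenever the inequality is strict and are merely bounded in the critical case. So the scaling relation does not make your series summable --- it points the wrong way at infinity; the subcritical slack is precisely what forces one to keep the $\|u\|_{p,\Omega,\alpha}$ term, which a Riesz-potential bound by the gradient alone cannot see.

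For comparison, the paper does not prove this statement at all: it quotes it from Horiuchi's imbedding theorem (Theorem 3 of the cited reference). If you want a self-contained argument in the spirit of your localization, keep the full $W^{1,p}$ norm on each piece: since $\alpha_0/q\geq\alpha/p$ forces $N/p-N/q\leq 1$, i.e.\ $q\leq Np/(N-p)$, the unweighted Sobolev inequality $\|u\|_{L^q(\Omega\cap Q)}\leq C\|u\|_{W^{1,p}(\Omega\cap Q)}$ holds with a uniform constant on congruent unit cubes $Q$ (uniform Lipschitz character of $\Omega\cap Q$); the weight $(1+|x|)^{-\alpha}$ is essentially constant on each $Q$, the comparison $(1+|x|)^{-\alpha_0/q}\lesssim(1+|x|)^{-\alpha/p}$ is exactly the hypothesis $\alpha_0/q\geq\alpha/p$, and summing over the cubes with $\ell^p\hookrightarrow\ell^q$ (using $q\geq p$) gives the embedding. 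In such a localized argument your extension step (reflection across $\{x_N=0\}$, which indeed preserves $(1+|x|)^{-\alpha}$, plus a local Stein extension across $\{|x'|=R\}$, harmless because the weight is comparable under bounded displacements) is not even needed.
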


\begin{lemma}\label{64} (see \cite[ p. 1004]{10.1215/S0012-7094-84-05145-7}) Let $0<\theta<1$, $\sigma>0$, $\xi(h)$ be a nonnegative function on the interval $[1 / 2,1]$, and let
$$
\xi(k) \leq C_0(h-k)^{-\sigma}(\xi(h))^\theta, \quad 1 / 2 \leq k<h \leq 1,
$$
for some positive constant $C_0$. Then, there exists $C_1(\sigma, \theta)>0$ such that
$$
\xi(1 / 2) \leq C_1 C_0^{\frac{1}{1-\theta}} .
$$
\end{lemma}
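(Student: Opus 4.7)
My plan is to use a geometric iteration on a sequence of points increasing from $1/2$ toward $1$. Set $k_n = 1 - 2^{-(n+1)}$, so that $k_0 = 1/2$, the sequence is strictly increasing, $k_n \to 1$, and $k_{n+1}-k_n = 2^{-(n+2)}$. Writing $a_n = \xi(k_n)$ and applying the hypothesis with $k = k_n$, $h = k_{n+1}$ gives the one-step recursion
\[
a_n \le C_0\, 2^{(n+2)\sigma}\, a_{n+1}^{\theta}.
\]

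Iterating this recursion $n$ times and collecting the exponents yields
\[
a_0 \le C_0^{S_n}\, 2^{\sigma T_n}\, a_n^{\theta^n}, \qquad S_n = \sum_{j=0}^{n-1}\theta^j, \quad T_n = \sum_{j=0}^{n-1}(j+2)\theta^j.
\]
Because $0<\theta<1$, both series converge: $S_n \to 1/(1-\theta)$ and $T_n \to T_\infty = 2/(1-\theta)+\theta/(1-\theta)^2$, a finite constant depending only on $\theta$. So everything hinges on controlling the tail factor $a_n^{\theta^n}$.

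The main obstacle is exactly this tail, because the hypothesis does not assume $\xi$ is bounded on $[1/2,1]$, and a priori $a_n = \xi(k_n)$ could blow up as $k_n \to 1$. The key observation is that I can apply the hypothesis once more with $k=k_n$, $h=1$, to obtain $a_n \le C_0\, 2^{(n+1)\sigma}\,\xi(1)^{\theta}$, whence
\[
a_n^{\theta^n} \le C_0^{\theta^n}\, 2^{(n+1)\sigma\theta^n}\, \xi(1)^{\theta^{n+1}}.
\]
Since $\theta^n$ decays to $0$ faster than any polynomial in $n$ grows, each of the three factors on the right tends to $1$ (the degenerate cases $C_0=0$ or $\xi(1)=0$ force $\xi\equiv 0$ on $[1/2,1)$ directly from the hypothesis, hence are trivial). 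Passing to the limit $n\to\infty$ in the iterated inequality then gives $\xi(1/2)\le 2^{\sigma T_\infty} C_0^{1/(1-\theta)}$, which is the desired bound with $C_1 = 2^{\sigma T_\infty}$ depending only on $\sigma$ and $\theta$.
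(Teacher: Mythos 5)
Your proof is correct: the dyadic iteration $k_n = 1-2^{-(n+1)}$, the convergent geometric sums for the exponents of $C_0$ and $2^{\sigma}$, and the extra application of the hypothesis with $h=1$ to show $\xi(k_n)^{\theta^n}\to 1$ together give $\xi(1/2)\le 2^{\sigma T_\infty}C_0^{1/(1-\theta)}$, which is exactly the claimed bound with $C_1=C_1(\sigma,\theta)$. The paper does not prove this lemma but cites Fabes--Stroock for it, and your argument is essentially the standard iteration proof used there, with the tail issue handled cleanly and without assuming boundedness of $\xi$.
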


\section{The behavior of solutions near the singular set} \label{32}

In order to achieve our main results, we require the following analytic results regarding the behavior near the singular set of solutions to \eqref{1}.

\begin{proposition} \label{66}
Assume that \ref{43} and \ref{45} are  satisfied and $1<p<N-d$. Under the assumption $(A_2)$, suppose that $u \in W_{\operatorname{loc}  }^{1, p  }(\bar{\Omega}   \backslash \Gamma ; $ $\vartheta _{\alpha})\cap L_{\operatorname{loc}  }^{\infty}(\bar{\Omega}   \backslash \Gamma)$ is a solution of \eqref{1} in $\bar{\Omega}   \backslash \Gamma$. Then, if $\varepsilon _0 >0$,
\begin{equation}\label{75}
|u_1(x^\prime)| \leq C |x^\prime|  ^{-\tau}, \quad \text { \textit{a.e.} in } \left\{x^\prime \in B^+ _R   \:|\: 0<|x^\prime|  <r_0\right\}  ,
\end{equation}
where $C=C(N, \mu, \alpha , \beta , d , \varepsilon _0 , p  , q , \|u_2 \|_{q+1,\mathbb{R}^d,\alpha} , \|u_2\| _{1,p, \mathbb{R}^d,\alpha}   , \mathcal{U})>0$ and $\tau=\tau(N, \mu, \alpha , \beta ,  d ,  \varepsilon _0 , p  , $ $ q) \in (\frac{p}{q-p+1} ,  \frac{p}{q-p+1} + \varepsilon _0 )$.
\end{proposition}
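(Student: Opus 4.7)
The strategy is a De Giorgi-type iteration on the superlevel sets of $u_1^{+}$, adapted to the product structure $u(x)=u_1(x')u_2(x'')$ supplied by $(A_2)$ and closed by Lemma~\ref{64}. By \ref{45} and the positivity $u_2>0$ we may treat $u_1^{+}$ and $u_1^{-}$ separately, so assume $u_1\ge 0$. Fix a small scale $r\in(0,r_0)$; the conclusion \eqref{75} will be produced by showing $u_1\lesssim r^{-\tau}$ uniformly on each dyadic annulus $\{|x'|\asymp r\}$ and then summing dyadically in $r$.

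The key step is to feed into the weak formulation \eqref{62} a family of test functions
$$\varphi_h(x)\;=\;\bigl(u_1(x')-k_h\bigr)_{+}\,\eta_h(x')^{p},\qquad h\in[1/2,1],$$
where $\eta_h$ is a radial Lipschitz cutoff in $|x'|$ with $|\nabla\eta_h|\lesssim 1/(r(h-k))$ localized to the annulus in question, and $k_h\simeq M\,r^{-\tau}h$ for a constant $M>0$ to be optimized. The integrabilities imposed on $u_2$ in $(A_2)$ — namely $u_2\in W^{1,p}\cap L^{q+1}\cap L^{q_1}\cap L^{\infty}$ and $(u_2+|\nabla u_2|)^{p-1}\in L^{1}(\mathbb{R}^{d};\vartheta_{\alpha})$ — guarantee both admissibility of $\varphi_h$ in the sense of the weak-solution definition and absolute convergence, after Fubini in $x''$, of each of the four integrals that appear. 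Using \ref{43}, the decomposition $\nabla u=(u_1\nabla_{x''}u_2,\,u_2\nabla_{x'}u_1)$, and Young's inequality, one extracts on the left a good $p$-energy bounded below by $\mu\int_{\{u_1>k_h\}}u_2^{p-1}\eta_h^{p}|\nabla_{x'}u_1|^{p}\,d\vartheta_{\alpha}$, plus an absorption contribution bounded below via $g(u)\operatorname{sign} u\ge|u|^{q}$ by a multiple of $\int (u_1-k_h)_{+}\,u_1^{q}u_2^{q+1}\eta_h^{p}\,d\vartheta_{\alpha}$; the boundary integral is nonnegative because $b(u)\operatorname{sign} u\ge 0$ and $\varphi_h\ge 0$. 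The right-hand side reduces to $\|\nabla\eta_h\|_{\infty}^{p}$-weighted integrals of $(u_1-k_h)_{+}$ together with cross terms involving $|\nabla_{x''}u_2|^{p-1}$, which are absorbed into the $L^{1}$-bounded weight $(u_2+|\nabla u_2|)^{p-1}\vartheta_{\alpha}$.

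Integrating the $x''$-variables out against the finite constants $\int u_{2}^{p-1}\,d\vartheta_{\alpha}$, $\int u_{2}^{q+1}\,d\vartheta_{\alpha}$, etc., and then applying the weighted Sobolev embedding of Proposition~\ref{59} in the $(N-d)$-dimensional variable $x'$, the resulting estimate collapses to exactly the form required by Lemma~\ref{64},
$$\xi(k)\;\le\;C_0(h-k)^{-\sigma}\,\xi(h)^{\theta},\qquad 1/2\le k<h\le 1,$$
with $\xi(h)$ a suitable weighted $L^{s}$-norm of $(u_1-k_h)_{+}\eta_h$. The subcritical hypothesis $N-d>pq/(q-p+1)$ is exactly what yields $\theta>1$, and the freedom of choice of $q_1<\bigl((N-d)(q-p+1)-pq\bigr)/(2p)$ in $(A_2)$ absorbs the loss produced by the weight $\vartheta_{\alpha}$ in the Sobolev exponent, which is precisely the source of the slack $\varepsilon_0$ allowed in $\tau$. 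Lemma~\ref{64} then forces $\xi(1/2)\le C_1 C_0^{1/(1-\theta)}$; optimizing in $M$ converts this into $u_1(x')\le Cr^{-\tau}$ on $\{|x'|\asymp r\}$, and dyadic summation completes \eqref{75}. The main obstacle is precisely the coupling between the two sets of variables produced by $A(\nabla u)$: the bound $|A(\nabla u)|\le \mu^{-1}|\nabla u|^{p-1}\le C\bigl(u_2^{p-1}|\nabla_{x'}u_1|^{p-1}+u_1^{p-1}|\nabla_{x''}u_2|^{p-1}\bigr)$ produces cross terms in $|\nabla_{x''}u_2|^{p-1}$ that must be absorbed either into the good energy via Young's inequality or into the $L^{1}$-bounded quantity $(u_2+|\nabla u_2|)^{p-1}\vartheta_{\alpha}$, and this is exactly the reason $(A_2)$ imposes the simultaneous integrability conditions on $u_2$.
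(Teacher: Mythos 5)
Your plan diverges from the paper's argument at the decisive point, and as written it does not close. You set up a De Giorgi iteration on superlevel sets and then say that the subcriticality $N-d>pq/(q-p+1)$ "yields $\theta>1$", after which you invoke Lemma~\ref{64}. But Lemma~\ref{64} is stated (and is only true in that form) for $0<\theta<1$: the conclusion $\xi(1/2)\le C_1C_0^{1/(1-\theta)}$ is meaningless for $\theta>1$, and an inequality $\xi(k)\le C_0(h-k)^{-\sigma}\xi(h)^{\theta}$ with $\theta>1$ gives boundedness only under a smallness condition on the starting quantity (the classical De Giorgi fast-convergence lemma). Here no such smallness is available: the solution is only assumed to lie in $W^{1,p}_{\operatorname{loc}}\cap L^{\infty}_{\operatorname{loc}}$ away from $\Gamma$, with no a priori integral bound near the singular set that is uniform in the annulus radius $r$, so your iteration has nothing to start from. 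The paper avoids exactly this obstruction by a different device (in the spirit of Skrypnik and Mamedov--Harman): after normalizing $u_1=C_*w_1$, it tests with $z_k=\max\{w-(m_t\xi+k)u_2,\,0\}$, derives a level-set inequality, and then \emph{integrates in the level parameter} $k$ over $[0,\mathcal{K}]$ using $\frac{\textnormal{d}}{\textnormal{d}k}\int_{\Omega_k'}\hat z_k\,\textnormal{d}x' =-|\Omega_k'|$. This converts the Caccioppoli-type bound into an inequality between the suprema $m_s$ and $m_t$ on nested annuli $V_{\ell,sr}\subset V_{\ell,tr}$ of the form $m_s\le C\,m_t^{\theta}(t-s)^{-\sigma}$ with $\theta<1$, to which Lemma~\ref{64} applies with no smallness hypothesis; the choice $C_*=r^{-\tau}$ then produces the exponent $\tau\in(\frac{p}{q-p+1},\frac{p}{q-p+1}+\varepsilon_0)$.

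A second, independent problem is the admissibility of your test functions. You take $\varphi_h(x)=(u_1(x')-k_h)_+\eta_h(x')^{p}$, which depends on $x'$ only; the relevant neighbourhoods of the annulus are of the form $\mathbb{R}^d\times\{\,||x'|-\ell|<r\,\}$, unbounded in $x''$, and for $\alpha$ small (e.g.\ $\alpha=0$) such a $\varphi_h$ is not in $W^{1,p}(U;\vartheta_\alpha)$, hence not an admissible test function for \eqref{62}. The paper's test function carries the factor $u_2(x'')$ precisely to ensure integrability in the unbounded directions, and the hypotheses $(u_2+|\nabla u_2|)^{p-1}\in L^1(\mathbb{R}^d;\vartheta_\alpha)$, $u_2\in L^{q+1}\cap W^{1,p}(\mathbb{R}^d;\vartheta_\alpha)$ are used to integrate out $x''$ after testing, not to justify a test function independent of $x''$. (Your treatment of the cross terms $u_1^{p-1}|\nabla_{x''}u_2|^{p-1}$ and of the boundary term is in the right spirit, but it does not repair either of the two gaps above.)
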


\begin{proposition} \label{266}
Assume that  \ref{43} and  \ref{45}  are satisfied, $p   -1 < \beta $ and $\mu_1>0$. Under the assumption $(A_1)$, suppose that $u \in W_{\operatorname{loc}  }^{1, p  }(\bar{\Omega}   \backslash \Gamma )\cap L_{\operatorname{loc}  }^{\infty}(\bar{\Omega}   \backslash \Gamma)$ is a solution of \eqref{1} in $\bar{\Omega}   \backslash \Gamma$. Then, if $\varepsilon _0 >0$,
\begin{equation}\label{275}
|u(x)| \leq C |x|  ^{-\tau}, \quad \text { \textit{a.e.} in } \left\{x \in \Omega   \mid 0<|x^\prime|  <r_0\right\}  ,
\end{equation}
where $C=C(N, \mu,  \beta , d , \varepsilon _0 , p  , q ,   C_0 ,\mathcal{U} )>0$ and $\tau=\tau(N, \mu, \beta , d ,  \varepsilon _0 , p  ,  $ $ q) \in (\frac{p}{q-p+1} , $ $ \frac{p}{q-p+1} + \varepsilon _0 )$.
\end{proposition}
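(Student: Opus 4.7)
The plan is to mirror the Moser--De Giorgi iteration used for Proposition \ref{66}, replacing the product structure supplied by $(A_2)$ by the pointwise tangential decay furnished by $(A_1)$. The roles of the ingredients $\mu_1>0$ and $p-1<\beta$ are to replace the Poincar\'e inequality (unavailable because $\Gamma=\mathbb{R}^d\times\{0\}$ is unbounded) by the compactness of the weighted trace operator in Proposition \ref{63} together with the equivalence of norms in Proposition \ref{68}. The argument is carried out for $u^+$, the case of $u^-$ being symmetric thanks to \ref{45}.

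I would first fix $0<r_1<r_2<r_0$ small and a smooth cutoff $\eta(x)=\zeta(|x'|)$ supported in the annulus $\{r_1<|x'|<r_2\}$. For $s\geq p$ and $k\geq 0$, I would test \eqref{62} with $\varphi=\eta^{p}(u^+-k)_+^{\,s-p+1}$; thanks to $(A_1)$, the decay $u^\pm\leq C_0 m^{\pm}_{r_1,r_2}(1+|x''|)^{-\delta}$ with $\delta>d$ makes $\int_{\mathbb{R}^d}(1+|x''|)^{-s\delta-\alpha}\,dx''$ finite, so all integrals converge. Using \ref{43}, the resulting inequality reads
$$
\mu\!\!\int\!\eta^{p}(u^+-k)_+^{\,s-p}|\nabla u|^{p}\,d\vartheta_\alpha
+\!\!\int\!\eta^{p}|u^+|^{s+q-p+1}\,d\vartheta_\alpha
+\mu_{1}\!\!\int_{\partial\Omega}\!\eta^{p}|u^+|^{s}\,d\vartheta_\beta
\leq C\!\!\int|\nabla\eta|^{p}(u^+-k)_+^{\,s}\,d\vartheta_\alpha,
$$
where the boundary contribution is now on the good side, not a perturbation to be absorbed.

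Next I would combine this energy estimate with the weighted Sobolev embedding (Proposition \ref{59}) applied to $w=\eta\,(u^+-k)_+^{s/p}$ and with the equivalent norm $\|\cdot\|_\partial$ of Proposition \ref{68}, which is valid precisely because the strict condition $p-1<\beta$ places us in the compact trace case of Proposition \ref{63}. This produces a reverse-H\"older-type inequality of the shape
$$
\|u^+\|_{s\kappa,A_{r_1},\alpha}^{\,s}\leq C\,(r_2-r_1)^{-p}\,\|u^+\|_{s,A_{r_2},\alpha}^{\,s},\qquad \kappa>1,
$$
where $A_r=\Omega\cap\{|x'|<r\}$ and $\kappa$ is the Sobolev gain depending on $N-d$, $p$, $\alpha$, $\beta$. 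Iterating along a geometric sequence $r_{j+1}-r_j\sim 2^{-j}(r_2-r_1)$ and $s_{j+1}=\kappa s_j$, and closing the chain with Lemma \ref{64}, yields an $L^\infty$ estimate
$$
\operatorname*{ess\,sup}_{A_{r_*/2}} u^+ \leq C\,r_*^{-\tau}\,\|u^+\|_{s_0,A_{r_*},\alpha},
$$
with $\tau=\tau(N,\mu,\beta,d,p,q,s_0)$. A careful choice of $s_0$ tunes $\tau$ into the prescribed window $\bigl(\tfrac{p}{q-p+1},\,\tfrac{p}{q-p+1}+\varepsilon_0\bigr)$, after which $(A_1)$ is invoked once more to bound $\|u^+\|_{s_0,A_{r_*},\alpha}$ in terms of $m^{+}_{r_1,r_2}$ and the finite integral of $(1+|x''|)^{-s_0\delta-\alpha}$.

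The main obstacle is the simultaneous handling of three couplings: the unboundedness of $\Gamma$, the weight $\vartheta_\alpha$ (and $\vartheta_\beta$ on the boundary), and the precise value of $\tau$. The unboundedness is bypassed only through $(A_1)$, which must be inserted both to make the iteration integrals converge and to turn the final $L^{s_0}$ bound into a pointwise bound in $|x'|$; the weights force us to use Propositions \ref{63}, \ref{59} and \ref{68} rather than their classical counterparts, and the strict hypothesis $p-1<\beta$ is needed so that Proposition \ref{63} provides the \emph{compact} trace and hence the equivalence $\|\cdot\|_\partial\sim\|\cdot\|_{1,p,\Omega,\alpha}$; and the tuning of $\tau$ relies on a delicate dimensional balance between the Sobolev exponent $\kappa$ and the absorbing effect of $|u^+|^{s+q-p+1}$, which is ultimately what pins the rate to $p/(q-p+1)$.
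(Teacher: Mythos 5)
You correctly identify why $\mu_1>0$ and $p-1<\beta$ are needed (the compact trace of Proposition \ref{63} and the equivalent norm of Proposition \ref{68} stand in for a Poincar\'e inequality) and you invoke the right closing device, Lemma \ref{64}; but your iteration is set on the sets $A_r=\Omega\cap\{|x'|<r\}$, which \emph{contain} the singular set $\Gamma$. There $u$ has no a priori integrability whatsoever -- the content of the proposition is precisely a rate as $|x'|\to 0$ -- so neither $\|u^{+}\|_{s_0,A_{r_*},\alpha}$ nor the essential supremum of $u^{+}$ over $A_{r_*/2}$ is known to be finite, and the proposed reverse-H\"older chain cannot start, let alone close. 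The paper's Lemma \ref{219} (modelled on Lemma \ref{19}) works instead entirely on annular shells $V_{\ell,r}=\{x\in\Omega\::\:||x'|-\ell|<r\}$ away from $\Gamma$, estimating $m_{1/2}$ in terms of $m_t$ on such shells and only then letting $\ell\to 0$. Moreover, $(A_1)$ is used there not merely to make $x''$-integrals converge but, via \eqref{27}, to provide the finite-measure factor for the superlevel sets (the shells are unbounded in the $x''$ directions), which is what legitimizes the H\"older step; this quantitative use of $(A_1)$ is absent from your sketch.

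The more serious gap is the origin of the exponent $\tau$. A straight power-type Moser iteration of the kind you outline produces a bound of the form $\sup u^{+}\leq C\,r^{-(N-d)/s_0}\|u^{+}\|_{s_0}$, i.e.\ a purely dimensional rate; the rate $\frac{p}{q-p+1}$ can only come from a quantitative use of the absorption term $|u|^{q}$. In the paper this is done through the normalization $u=C_*w$ with $C_*=r^{-\tau}$ and the level-set test functions $z_k$: testing with $z_k$, applying Young's inequality with a parameter $\varepsilon$, and integrating over the levels $k\in(0,\mathcal K)$ makes the left-hand side carry $m_s^{\,1+q(1-\varepsilon)/\gamma_3}$ (since $\mathcal K\geq m_s$), while the right-hand side carries a strictly smaller power of $m_t$; this yields $m_s\leq C(t-s)^{-\sigma}m_t^{\theta}$ with $\theta<1$, so Lemma \ref{64} applies, and the choice of $C_*$ balancing $C_*^{\,q-p+1}$ against $r^{-p}$ is what pins $\tau$ inside $(\frac{p}{q-p+1},\frac{p}{q-p+1}+\varepsilon_0)$. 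Your proposal only gestures at a ``delicate dimensional balance'' and ends by bounding $\|u^{+}\|_{s_0}$ by $m^{+}_{r_1,r_2}$, which is circular unless the supremum enters the right-hand side with total power strictly less than one -- the contraction Lemma \ref{64} requires -- and no step in your sketch produces that gain. As written, the argument neither closes nor yields the claimed window for $\tau$; supplying the missing mechanism essentially forces you back to the level-set scheme of Lemmas \ref{19} and \ref{219}.
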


Let $r_0 \in (0,1)$ be small enough,  write  $\mathcal{U}=\{x\in \Omega \:|\: |x^\prime|   < 2 r_0 \}$. For $\ell\in \left(0, r_0\right)$ and $r\in (0,\ell)$, we define $V_{\ell,r}= \{x\in \Omega  \:|\: \left| |x^\prime|   - \ell \right|< r \}$ and $V_{\ell,r} ^{\prime }= \{x^\prime \in B^+ _R \:|\: \left| |x^\prime|   - \ell \right|< r \}$.

\subsection{Proof of Proposition \ref{66}} 

The proof  follows from the next lemma.

\begin{lemma} \label{19}
Assume that   \ref{43} is  satisfied and $1<p<N-d$. Under the assumption $(A_2)$, suppose that   $u\in  W^{1, p  } _{\operatorname{loc}} (\bar{\Omega} \backslash \Gamma ; \vartheta _{\alpha}  )$ $\cap L^{\infty} _{\operatorname{loc}} (\bar{\Omega}  \backslash \Gamma )$ satisfies 
\begin{equation} \label{22}
\int _{\Omega } \left(\left\langle A(\nabla u) , \nabla \varphi \right\rangle  +  g (u) \varphi \right)  \textnormal{d}\vartheta _{\alpha}  + \int _{\partial \Omega} b( u)   \varphi \textnormal{d}\vartheta _{\beta} \leq 0,
\end{equation}
for all $\varphi \in W_{\operatorname{loc}}^{1, p }(\bar{\Omega}  \backslash \Gamma  ;\vartheta _{\alpha}) \cap L_{\operatorname{loc}}^{\infty}(\bar{\Omega}  \backslash \Gamma )$, $\varphi \geq 0$, with   $\operatorname{supp} \varphi  \subset \bar{\Omega } \backslash \Gamma $. Then, if $\varepsilon _0 >0 $ and  $0<r<\ell < r_0$ we have the estimate
\begin{equation} \label{42}
\max \{u_1 ,0\}   \leq C r^{-\tau}, \quad \text { \textit{a.e.} in } V_{\ell, r/2} ^\prime ,
\end{equation}
where $C=C(N,  \mu  , \alpha , \beta , d , \varepsilon _0 , p , q  ,  \|u_2 \|_{q+1,\mathbb{R}^d,\alpha} , \|u_2\| _{1,p, \mathbb{R}^d,\alpha} , \mathcal{U} )>0$ and  $\tau=\tau (N,  \mu  , \alpha , \beta ,  $ $ d ,  \varepsilon _0 , p , q   )\in ( \frac{p}{q- p   +1} ,  \frac{p}{q- p   +1} +\varepsilon _0)$. 
\end{lemma}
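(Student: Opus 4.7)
The plan is to establish \eqref{42} via a Stampacchia-type iteration on the level sets of $u_1^+$, where the product decomposition $u(x)=u_1(x')u_2(x'')$ provided by $(A_2)$ reduces the $N$-dimensional subsolution inequality \eqref{22} to a one-variable inequality for $u_1^+$ on the annular slab $V^\prime_{\ell,r}$. The desired decay $Cr^{-\tau}$ then follows by applying Lemma \ref{64} after a suitable choice of level, while the slack $\varepsilon_0$ in $\tau$ originates from using a Sobolev embedding slightly away from the critical scale.

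Concretely, I would fix $r/2\le\rho_1<\rho_2\le r$, pick a cutoff $\eta=\eta(x')\in C^\infty_c(V^\prime_{\ell,\rho_2})$ with $\eta\equiv 1$ on $V^\prime_{\ell,\rho_1}$ and $|\nabla\eta|\le C/(\rho_2-\rho_1)$, and, for a level $k>0$ to be optimized at the end, insert the test function
$$\varphi(x)\;=\;\eta(x')^p\,\bigl(u_1^+(x')-k\bigr)_+^{\,p-1}\,u_2(x'')$$
into \eqref{22}. Nonnegativity of $\varphi$ follows from $u_2>0$, and $\operatorname{supp}\varphi\subset\bar\Omega\setminus\Gamma$ holds because $\eta$ vanishes near $\Gamma$. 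Using \ref{43} to bound $\langle A(\nabla u),\nabla\varphi\rangle$ from below (isolating the $|\nabla_{x'}u_1|^p$-piece and handling the cross terms that couple $\nabla_{x'}u_1$ with $u_1\nabla_{x''}u_2$ via Young's inequality), together with the absorption estimate $g(u)\varphi\ge u^q\varphi=u_1^q u_2^{q+1}\eta^p(u_1-k)_+^{p-1}$ on $\{u_1>0\}$ and the favorable sign $b(u)\varphi\ge 0$ for the boundary term, one obtains a Caccioppoli inequality of the shape
$$\int_{V^\prime_{\ell,\rho_1}}\!|\nabla_{x'}(u_1^+\!-k)_+|^p\,dx'+\int_{V^\prime_{\ell,\rho_1}}\!(u_1^+)^q(u_1^+\!-k)_+^{\,p-1}\,dx'\;\le\;\frac{C}{(\rho_2-\rho_1)^p}\int_{V^\prime_{\ell,\rho_2}}\!(u_1^+\!-k)_+^{\,p}\,dx',$$
after integrating out $x''$ using $\|u_2\|_{1,p,\mathbb R^d,\alpha}$, $\|u_2\|_{q+1,\mathbb R^d,\alpha}$ and $(u_2+|\nabla u_2|)^{p-1}\in L^1(\mathbb R^d;\vartheta_\alpha)$ from $(A_2)$.

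I would then combine this Caccioppoli estimate with the weighted Sobolev embedding of Proposition \ref{59} applied on $V^\prime_{\ell,\rho_2}\subset\{|x'|<2r\}$ in $\mathbb R^{N-d}$, producing a decay recursion
$$\Phi(k,\rho_1)\;\le\;\frac{C\,\Phi(h,\rho_2)^{\theta}}{k^{\lambda}(\rho_2-\rho_1)^{\sigma}},\qquad 0<k<h,\ \ r/2\le\rho_1<\rho_2\le r,$$
for a nonnegative quantity $\Phi(k,\rho)$ (either the superlevel measure $|\{u_1>k\}\cap V^\prime_{\ell,\rho}|$ or the truncated energy $\int_{V^\prime_{\ell,\rho}}(u_1^+-k)_+^{\,p}\,dx'$), with exponents $\theta\in(0,1)$ and $\sigma,\lambda>0$ whose balance is driven by $p/(q-p+1)$. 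After rescaling $\rho\mapsto\rho/r$ to bring the iteration to $[1/2,1]$, a direct application of Lemma \ref{64} together with a concluding choice $k\sim r^{-\tau}$ collapses the bound into the pointwise estimate on $V^\prime_{\ell,r/2}$.

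The main technical obstacle will be the mixed-gradient term in $\langle A(\nabla u),\nabla\varphi\rangle$: since $\nabla u=(u_2\nabla_{x'}u_1,\,u_1\nabla_{x''}u_2)$ and $\nabla\varphi$ has an analogous split, the nonlinearity of $A$ couples the two variables and must be disentangled using \ref{43} and Young's inequality, relying crucially on the integrability of $u_2$ and $\nabla u_2$ to keep the $x''$-integrals finite. The $\varepsilon_0$-slack in $\tau$ reflects a Hölder step bridging the $p$-exponent of the gradient term and the $q$-exponent of the absorption term, which cannot be performed at the exact critical Sobolev exponent available in Proposition \ref{59}.
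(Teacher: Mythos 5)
Your plan diverges from the paper's argument (which follows the Mamedov--Harman scheme: normalize $u_1=C_\ast w_1$ with $C_\ast=r^{-\tau}$, test with truncations $z_k=\max\{w-(m_t\xi+k)u_2,0\}$ of the \emph{full} product $w=u_1u_2$, integrate the resulting level-set inequality over $k\in[0,\mathcal K]$ using $\tfrac{d}{dk}\int\hat z_k\,dx'=-|\Omega_k'|$, and then apply Lemma \ref{64} in the cutoff parameter $t\in[1/2,1]$ to $\xi(t)=m_t$), and as written it has two genuine gaps. First, the Caccioppoli step is not justified under \ref{43}: the hypothesis gives coercivity only along the full gradient, $A(\nabla u)\cdot\nabla u\ge\mu|\nabla u|^p$, while your test function $\varphi=\eta^p(u_1^+-k)_+^{p-1}u_2$ produces as leading term the pairing $A(\nabla u)\cdot(\nabla_{x'}u_1,0)$, which has no sign for a general nonlinear $A$ (e.g.\ $A(\eta)=|\eta|^{p-2}M\eta$ with $M$ nonsymmetric positive definite satisfies \ref{43} but makes such partial pairings change sign). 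Rewriting $(\nabla_{x'}u_1,0)=u_2^{-1}\bigl[\nabla u-(0,u_1\nabla_{x''}u_2)\bigr]$ restores coercivity but leaves remainders of the type $u_1^p(u_1-k)_+^{p-2}|\nabla_{x''}u_2|^p$, which are of higher order in $u_1$ than the right-hand side $\frac{C}{(\rho_2-\rho_1)^p}\int(u_1-k)_+^p$ you wrote and are not absorbed without a further argument; the paper sidesteps this entirely because its test function is built from $w=u_1u_2$ itself, so the leading pairing is $A(C_\ast\nabla w)\cdot\nabla w$.

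Second, and more importantly, your endgame does not deliver the actual content of the lemma: the constant $C$ must be independent of any norm of $u_1$ (a Keller--Osserman-type universal bound), with $\tau$ lying in the prescribed window $(\frac{p}{q-p+1},\frac{p}{q-p+1}+\varepsilon_0)$. A standard Stampacchia/De Giorgi iteration from your Caccioppoli inequality yields $\sup u_1^+$ in terms of an $L^p$-quantity of $u_1$ on the larger annulus, which is not controlled a priori near $\Gamma$. The recursion you propose, $\Phi(k,\rho_1)\le C\,\Phi(h,\rho_2)^{\theta}k^{-\lambda}(\rho_2-\rho_1)^{-\sigma}$ for $0<k<h$, bounds the quantity at the \emph{smaller} level by the one at the \emph{larger} level in two parameters, which does not match the one-parameter hypothesis of Lemma \ref{64}, and the step ``choose $k\sim r^{-\tau}$'' is not backed by any balance of powers of $r$ showing that the critical exponent $\frac{p}{q-p+1}$ emerges. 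In the paper the mechanism that makes the final constant forget $u_1$ is precisely the sublinearity $\theta<1$ in $m_s\le C(t-s)^{-\sigma}m_t^{\theta}$, produced by (i) the absorption term contributing $C_\ast^{q}k^{q}\int z_ku_2^q$, (ii) the integration of the level-set inequality in $k$ over $[0,\mathcal K]$ with $\mathcal K\ge m_s$, and (iii) the normalization $C_\ast=r^{-\tau}$ chosen so that all powers of $r$ cancel, which is also where $\tau$ is pinned in $(\frac{p}{q-p+1},\frac{p}{q-p+1}+\varepsilon_0)$ through the near-critical choices of $\gamma_1,\gamma_2,\varepsilon$. Without an analogue of these three ingredients your outline proves at best a local boundedness estimate depending on $u_1$, not the stated universal decay estimate.
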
 
\begin{proof} 
First note that, if  $v \in W^{1, p}(B^+_{2r_0})$ and $v=0$ on $\{x^\prime \in \overline{B^+_{2r_0}} \:|\: |x^{\prime}|=2 r_0\}$, we have
\begin{equation}\label{51}
\left(\int_{B^+_{2r_0}}|v|^q \textnormal{d}x^\prime \right)^{\frac{1}{q}} \leq C\left(\int_{B^+_{2r_0}}|\nabla v|^p \textnormal{d}x^\prime \right)^{\frac{1}{p}}
\end{equation}
for each $q \in [p, \frac{(N-d) p}{N-d-p})$, where $C=C( N-d , p , q , B^+_{2r_0})>0$. The proof of \eqref{51} proceeds by contradiction, by using that $W^{1, p}(B^+_{2r_0})$ is compactly embedded in $L^q (B^+_{2r_0})$. 

Next we divide the proof of the lemma into three steps: 

$\it{Steep \ 1.}$  \ In the sequel, we  use $C_i$, $i=1, 2,\ldots$, to denote  suitable positive constants. Let $u_1 = C_\ast w_1 $, where  $C_\ast>0$ is a  number that will be determined below. We assume that $|\{x^\prime \in V_{\ell,r/2} ^\prime  \:|\: w_1(x^\prime)>0\} |\neq 0$, otherwise \eqref{42} is immediate.  Set $\Omega _+   =\{x \in V_{\ell,r}  \:|\: w_1(x^\prime)>0\}$. Take
$$
m_{t}=\operatorname{ess} \operatorname{sup} \left\{  w_1(x^\prime)\:|\: x \in V_{\ell,t r} \cap \Omega _+   \right\}, \quad 1/2 \leq t \leq 1 .
$$
 Let $1/2 \leq s<t \leq 1$.  Define the functions $z : \Omega  \rightarrow \mathbb{R}$, $z_k  :\Omega  \rightarrow \mathbb{R}$, by
\begin{align*}
z (x)&=   w_1(x^\prime) - m_{t}   \xi \left(\left|  |x^\prime|   - \ell \right|\right)  ,\\
z_k  (x)&=\left\{ \begin{aligned}
& \max \left\{ w(x)-   \left ( m _{t}   \xi \left(\left|  |x^\prime|  - \ell \right|\right) + k\right) u_2(x^{\prime \prime}), 0\right\}  & & \text { if } x\in  V_{\ell , tr},\\
& 0 & & \text { if } x\in \Omega  \backslash V_{\ell , tr},
\end{aligned}
\right. 
\end{align*}
where  $w=w_1 u_2$,  $0 \leq k\leq \operatorname{ess} \operatorname{sup} _{\Omega _+ } z$, and $\xi : \mathbb{R} \rightarrow \mathbb{R}$ is a smooth function satisfying: $\xi = 0$ on $(-\infty , s r]$, $\xi  = 1$ on $ \left[ \frac{s+t}{2} r , \infty \right)$,
$$
0 \leq \xi \leq 1 \quad\text{and}\quad \left| \xi^{\prime}\right| \leq \frac{C_1}{r(t-s)} \quad\text{on}\quad    \mathbb{R}.
$$
Observe that $ z_{k} \in W^{1, p }(\bar{\Omega} \backslash \Gamma ;  \vartheta _{\alpha} )$ and  $ \operatorname{supp} z_k\subset \bar{\Omega} \cap\{ \left| |x^\prime|   - \ell \right| \leq tr \}\subset \bar{\Omega}  \backslash \Gamma $.   For simplicity we write $\xi\left( \left|  |x^\prime|  - \ell \right| \right) = \xi (x)$ and $\xi ^\prime \left( \left|  |x^\prime|  - \ell \right| \right) = \xi ^\prime (x)$. 

Take $k \in [0, \mathcal{K} )$, where $\mathcal{K} = \sup \{k\in [0 , \operatorname{ess} \operatorname{sup} _{\Omega   ^\prime} z] \:|\:  |\{ x\in V_{\ell , tr} \:|\: z_k (x) >0\} |\neq 0\}$. Observe that $\mathcal{K} \geq m_s \geq m_{1/2} $. Substituting $\varphi =  z_k$ into \eqref{22}, we obtain
\begin{equation*} 
\int _{\Omega }  \left( \left\langle A (\nabla u) , \nabla z_k  \right\rangle  +  g( u) z_k \right) \textnormal{d} \vartheta _{\alpha} + \int _{\partial \Omega} b(u)    z_k \textnormal{d}\vartheta _{\beta} \leq 0. 
\end{equation*}
Denote $\Omega _k  =\{x \in V_{\ell, t r}\:|\: z_k (x)>0\}$. Then,
\begin{align*}
  &\int_{ \Omega_k  }\left( \left\langle A \left( C_\ast \nabla w\right)  ,     \nabla w     - m _{t} u_2 \xi  ^{\prime}   \nabla ||x^\prime| -\ell |    
 - \left ( m _{t}   \xi + k\right) \nabla u_2 \right\rangle \right)\textnormal{d}\vartheta _{\alpha} \\ 
  &+ \int _{\partial \Omega   _k \cap \partial \Omega }  b( C_\ast w)  z_k \textnormal{d}\vartheta _{\beta}  
   +\int_{ \Omega  _k  }  g( C_\ast w) z_k \textnormal{d} \vartheta _{\alpha}   \leq 0.
\end{align*}
By \ref{43}, we have
\begin{equation*} \label{36}
\begin{aligned}
&\int_{\Omega _k }  \mu   C_{\ast} ^{p - 1}|\nabla w|^{p}   \textnormal{d}\vartheta _{\alpha}  +  \int_{\partial \Omega  _k \cap \partial \Omega }  \mu _1 |C_\ast w|^{p-1} z_k   \textnormal{d}\vartheta _{\beta}   \\
&+ \int_{\Omega _k }   |C_\ast w|^{q} z_k   \textnormal{d} \vartheta _{\alpha }   \leq \int _{\Omega _k}\mu ^{-1} \left[ \frac{ C_1 m_t}{r(t-s)} u_2 +  (m_t +k) |\nabla u_2| \right]  |C_\ast \nabla w|^{p-1} \textnormal{d} \vartheta _{\alpha }.
\end{aligned}
\end{equation*}
From $m_t u_2 \geq w \geq k u_2$ and $ w \geq z_k$ on $\Omega _k$, we have 
\begin{equation}\label{37}
\begin{aligned} 
&\int_{\Omega  _k } \mu    C_{\ast} ^{p - 1}|\nabla w|^{p}    \textnormal{d}\vartheta _{\alpha}   +    \int_{\partial \Omega  _k \cap \partial \Omega } \mu _1  C_\ast ^{p-1}  z_k ^p   \textnormal{d}\vartheta _{\beta}  \\
&+ \int_{\Omega  _k}     C_\ast^{q} k^{q} z_k  u_2 ^q \textnormal{d} \vartheta _{\alpha} \leq  C_2   \int_{\Omega  _k } \mu ^{-1} \frac{m_t C_\ast ^{p-1}}{r(t-s)}  (u_2 + |\nabla u_2|) | \nabla w|^{p-1}  \textnormal{d} \vartheta _{\alpha} .
\end{aligned}
\end{equation}
On the other hand, using Young's inequality,
\begin{equation} \label{38}
\begin{aligned}
&  \int_{\Omega  _k }  \frac{  m _t  C_\ast ^{p-1}}{r(t-s)}  (u_2 + |\nabla u_2|) |\nabla w|^{p-1} \textnormal{d} \vartheta _{\alpha}  \\
&\leq  \int_{\Omega  _k }   C_\ast ^{p-1} C_3 \left[\frac{m _t}{r(t-s)}\right]^{p} (u_2 ^p+ |\nabla u_2|^p)\textnormal{d} \vartheta _{\alpha } + \varepsilon _1 \int _{\Omega _k}C_\ast ^{p-1} |\nabla w|^{p}   \textnormal{d}\vartheta _{\alpha} .
\end{aligned}
\end{equation}
Take  $\varepsilon _1 = \frac{\mu ^2}{2C_2}$. Then, by \eqref{37} and \eqref{38},
\begin{align*}
&\frac{\mu }{2}\int_{\Omega _k} C_{\ast} ^{p- 1}  |\nabla w|^{p}    \textnormal{d}\vartheta _{\alpha}  +  \mu _1 \int_{\partial \Omega  _k \cap \partial \Omega} C_\ast ^{p-1} z_k ^p \textnormal{d}\vartheta _{\beta}  \\
&+ \int_{\Omega _k } C_\ast ^{q} k^{q}z_k  u_2 ^q\textnormal{d} \vartheta _{\alpha }  \leq    C_4  C_\ast ^{p-1} \left[\frac{m_t}{r (t-s)} \right]^{p  }  \int_{\Omega  _k }   \left(u_2 ^p+ |\nabla u_2|^p\right) \textnormal{d}\vartheta _{\alpha}.
\end{align*}
Note that $\nabla z_{k}=\nabla w-m_{t} u_2 \xi ^{\prime}  \nabla \left(| |x^\prime|   - \ell |\right) - (m_t\xi +k) \nabla u_2  $ in $\Omega  _k $, therefore
\begin{equation}\label{11}
\begin{aligned}
&\frac{\mu  C_{\ast} ^{p - 1} }{2}\int_{\Omega_k } \left( \frac{1}{2^{2(p-1)}}\left|\nabla z_{k}\right|^{p}-( \left|\xi ^\prime\right|^{p} u_2 ^p + 2^p |\nabla u_2|^p) m_{t}^{p} \right)   \textnormal{d}\vartheta _{\alpha}  
+ \mu  _1 C_\ast ^{p-1} \int_{\partial \Omega  _k \cap \partial \Omega}  z_k ^p \textnormal{d}\vartheta _{\beta}   \\
 &+ C_\ast ^{q} k^q \int_{\Omega  _k } z_k u_2 ^q\textnormal{d} \vartheta _{\alpha}
 \leq    C_4 C_\ast ^{p -1 } \left[\frac{m_t}{r (t-s)} \right]^{p} \int_{\Omega_k }  \left( u_2 ^p+ |\nabla u_2|^p \right)\textnormal{d}\vartheta _{\alpha}.
\end{aligned}
\end{equation}
Then, 
\begin{equation*} 
\begin{aligned}
&\frac{\mu  C_{\ast} ^{p - 1} }{2^{2p -1 }}\int _{\Omega  _k }    |\nabla z_{k}|^{p  } \textnormal{d}\vartheta _{\alpha}  +  \mu _1 C_{\ast} ^{p - 1}  \int _{\partial \Omega _k \cap \partial \Omega}  z_k ^{p  }  \textnormal{d}\vartheta _{\beta}  \\
&+  C_\ast ^{q} k^{q }\int_{\Omega  _k } z_k u_2 ^q \textnormal{d} \vartheta _{\alpha  } \leq    C_5  C_\ast ^{p -1 } \left[\frac{m_t}{r (t-s)} \right]^{p  } |\Omega _k ^\prime | \|u_2\| ^p _{1,p,\mathbb{R}^d,\alpha},
\end{aligned}
\end{equation*}
where $\Omega _k ^\prime=\{x^{\prime} \in V^\prime _{\ell, tr}\:|\: w_1(x^\prime)-    m _{t}   \xi \left(\left|  |x^\prime|  - \ell \right|\right) - k>0\}$. Hence,
\begin{equation} \label{14}
\begin{aligned}
&\mu  C_6 C_{\ast} ^{p - 1} \int _{\Omega  _k ^{\prime} } |\nabla \hat{z}_{k}|^{p} \textnormal{d}x^\prime \|u\|_{p,\mathbb{R}^d,\alpha} ^p  + C_\ast ^{q} k^{q}\int_{\Omega _k } z_k u_2 ^q \textnormal{d} \vartheta _{\alpha}\\
& \leq \frac{\mu  C_{\ast} ^{p - 1} }{2^{2p -1 }}\int _{\Omega  _k }    |u_2\nabla \hat{z}_{k} + \hat{z}_k \nabla u_2|^{p} \textnormal{d}\vartheta _{\alpha}  +  C_\ast ^{q} k^{q }\int_{\Omega  _k } z_k u_2 ^q \textnormal{d} \vartheta _{\alpha  }\\
&\leq C_5  C_\ast ^{p -1 } \left[\frac{m_t}{r (t-s)} \right]^{p} |\Omega _k ^\prime | \|u_2\| ^p _{1,p,\mathbb{R}^d,\alpha},
\end{aligned}
\end{equation}
where $u_2(x^{\prime \prime} )\hat{z}_k (x^\prime)= z_k (x)$.

On the other hand. We assume that  $\gamma _1  , \gamma _2 >0$ and $\varepsilon \in (0,1)$  satisfies $p< \gamma _2 < \gamma _1  <\frac{(N-d)p}{N-d -p}$, $\frac{p  }{q-p+1} + \varepsilon_0 > \frac{ p}{(q   - p   +1)(1-\varepsilon)} $ and $1 > (N -d)( 1- \frac{p}{\gamma _1}  )$. Then, by  \eqref{51},
\begin{equation}\label{39} 
\begin{aligned}
&\left(\int_{\Omega  _k ^\prime} \hat{z}_{k}^{\gamma _2  } \textnormal{d}x^\prime \right)^{\frac{1}{\gamma _2  }}\leq \left(\int _{\Omega  _k ^\prime} \hat{z}_k ^{\gamma _1} \textnormal{d}x^\prime \right)^{\frac{1}{\gamma _1}} |V_{\ell , tr} ^\prime|^{\frac{\gamma _1 - \gamma _2}{\gamma _1 \gamma _2}} \\  
&\leq  C_7 r^{\frac{(N-d)(\gamma _1   - \gamma _2  )}{\gamma _1  \gamma _2  }} \left(\int _{\Omega _k ^\prime}\left|\nabla \hat{z}_{k}\right|^{p} \textnormal{d}x^\prime   \right)^{\frac{1}{p }}.
\end{aligned}
\end{equation}
Using Hölder's inequality, we get
\begin{align}
&\int_{\Omega _k ^\prime} \hat{z}_{k} \textnormal{d}x^\prime  \leq \left(\int_{\Omega  _k ^\prime} \hat{z}_{k}^{ \gamma _2} \textnormal{d}x^\prime \right)^{\frac{1}{\gamma _2  }}|\Omega _k ^\prime |^{\frac{\gamma _2   -1}{ \gamma _2}} \label{40}.
\end{align} 
From \eqref{14} - \eqref{40}, we have
\begin{equation} \label{15}
\begin{aligned}
& \mu C_6 C_\ast  ^{p   - 1} \left[  C_7 ^{-1} |\Omega  _k ^\prime | ^{-\frac{\gamma _2   -  1}{\gamma _2  }}  r^{-\frac{(N-d)(\gamma _1  - \gamma _2  )}{\gamma _1  \gamma _2  }}   \int_{\Omega _k ^\prime} \hat{z}_k \textnormal{d}x^\prime  \right] ^{p} \|u_2\|_{p,\mathbb{R}^d,\alpha}^p +  C_\ast ^{q} k^q \int _{\Omega _k} z_k u_2 ^q   \textnormal{d} \vartheta _{\alpha  } \\
& \leq    C_5 C_\ast ^{p     -1 } \left[\frac{m_t}{r (t-s)} \right]^{p   } |\Omega _k ^\prime | \|u_2\| ^p _{1,p,\mathbb{R}^d ,\alpha }.
\end{aligned}
\end{equation}

$\it{Steep \ 2.}$ \  From \eqref{15}, we see
\begin{equation}  \label{16}
\begin{aligned}
& \varepsilon  C_\ast ^{p   - 1}\left(   \int_{\Omega _k ^\prime} \hat{z}_k \textnormal{d}x^\prime   \right)^{p }  r^{ -p \frac{(N-d)(\gamma _1  - \gamma _2  )}{\gamma _1   \gamma _2  }} \|u_2\|_{p,\mathbb{R}^d,\alpha} ^p 
  + (1-\varepsilon) C_\ast ^{q}k^q 
 |\Omega  _k ^\prime |^{p\frac{\gamma _2   -  1}{\gamma _2  }}  \int _{\Omega _k ^\prime} \hat{z}_k \textnormal{d}x^\prime \|u_2\|_{q+1,\mathbb{R}^d,\alpha }^{q+1}  \\
&\leq    C_8 C_\ast ^{p     -1 } \left[\frac{m_t}{r (t-s)} \right]^{p    } |\Omega _k ^\prime |^{1+  p  \frac{\gamma _2   -  1}{\gamma _2  }} \|u_2\|^p _{1,p,\mathbb{R}^d ,\alpha }.
\end{aligned}
\end{equation}
 Using   Young's inequality ($a^{\varepsilon} b^{1-\varepsilon} \leq \varepsilon a+(1-\varepsilon) b$) in the left-hand side of \eqref{16}, we obtain
\begin{equation*} 
\begin{aligned}
& k^{q    (1-\varepsilon)}     C_\ast ^{(p   - 1)\varepsilon +q   (1-\varepsilon)} 
 r^{ -p \frac{(N-d)(\gamma _1  - \gamma _2  )}{\gamma _1  \gamma _2  }\varepsilon} \left( \int_{\Omega _k ^\prime} \hat{z}_k \textnormal{d}x^\prime  \right)^{ 1 + (p-1) \varepsilon} \\
& \leq   C_{9}  C_\ast ^{p -1 } \left[\frac{m_t}{r (t-s)} \right]^{p}    |\Omega ^\prime _k|^{ 1+ p  \frac{\gamma _2   -  1}{\gamma _2  } \varepsilon}.
\end{aligned}
\end{equation*}
Therefore,
\begin{equation} \label{17}
\begin{aligned}
&  k^{\frac{q     (1-\varepsilon)}{\gamma _3}}  C_\ast ^{\frac{(  q      - p   +1)(1-\varepsilon )}{\gamma _3  }} \\
& \leq   C_{9}^{\frac{1}{\gamma _3  }} \left(   \int_{\Omega _k ^\prime } \hat{z}_k \textnormal{d}x^\prime  \right)^{-\frac{ 1 + (p- 1)\varepsilon }{\gamma _3  }} 
  r^{p \frac{(N-d)(\gamma _1  - \gamma _2  )}{\gamma _1  \gamma _2  }\frac{\varepsilon}{\gamma _3  }} \left[\frac{m_t}{r (t-s)} \right]^{\frac{p   }{\gamma _3  }}   |\Omega ^\prime _k| ,
\end{aligned}
\end{equation}
where $\gamma _3   =  1 +p \frac{\gamma _2   -  1}{\gamma _2  }\varepsilon $.

$\it{Steep \ 3.}$ \  Integrating \eqref{17} with respect to $k$,
\begin{equation*} 
\begin{aligned}
& C_\ast  ^{\frac{ (  q   - p  +1)(1-\varepsilon) }{\gamma _3  }}   \int _0 ^{\mathcal{K}} k^{\frac{q   (1-\varepsilon)}{\gamma _3  }}   \textnormal{d}k \\
& \leq  C_{9}  ^{\frac{1}{\gamma _3  }}  r^{ p \frac{(N-d)(\gamma _1  - \gamma _2  )}{\gamma _1  \gamma _2  }\frac{\varepsilon}{\gamma _3  }} \left[\frac{m_t}{r (t-s)} \right]^{\frac{p   }{\gamma _3  }}  
 \int _0 ^{\mathcal{K}} \left(   \int_{\Omega  _k ^\prime } \hat{z} _k \textnormal{d}x^\prime \right)^{-\frac{1 + (p-1) \varepsilon }{\gamma _3  }}  |\Omega ^\prime _k| \textnormal{d}k.
\end{aligned}
\end{equation*} 
Let us consider the equality  
$$
\frac{\textnormal{d}}{\textnormal{d}k} \left( \int_{\Omega  _k ^\prime } \hat{z}_{k} \textnormal{d}x^\prime \right) = -|\Omega ^\prime _k|.
$$
Since $1-\frac{ 1 + (p-1) \varepsilon }{\gamma _3  }>0$ and  $\mathcal{K}>1$, 
\begin{equation*}
\begin{aligned}
&\mathcal{K} ^{1+ \frac{ q   (1-\varepsilon)}{\gamma _3  } } C_\ast  ^{\frac{ (q     - p   +1)(1-\varepsilon )}{\gamma _3  }} 
\leq  C_{10}   r^{ p \frac{(N-d )(\gamma _1  - \gamma _2  )}{\gamma _1  \gamma _2  }\frac{\varepsilon}{\gamma _3  }}       \left[\frac{m_t}{r (t-s)} \right]^{\frac{p   }{\gamma _3  }}  
  \left(   \int_{\Omega  _0 ^\prime} \hat{z}_0 \textnormal{d}x^\prime   \right)^{1-\frac{  1 +(p-1) \varepsilon }{\gamma _3  }}.
\end{aligned}
\end{equation*}
Notice that $\mathcal{K}\geq m_s$. Apply the estimate
$$
\int_{\Omega  _0 ^\prime } \hat{z}_{0} \textnormal{d}x^\prime \leq m_{t} | V_{\ell , tr} ^{\prime} |,
$$ 
we have
\begin{equation*}
\begin{aligned}
&m_s ^{1+ \frac{ q (1-\varepsilon)}{\gamma _3  } } C_\ast  ^{\frac{( q   - p   +1)(1-\varepsilon) }{\gamma _3  }} \\
&\leq   C_{11}   r^{ p\frac{(N-d)(\gamma _1  - \gamma _2  )}{\gamma _1  \gamma _2  }\frac{\varepsilon}{\gamma _3  }}       \left[\frac{m_t}{r (t-s)} \right]^{\frac{p    }{\gamma _3  }}  
  m_t ^{1-\frac{ 1 + (p-1)\varepsilon }{\gamma _3  }} r^{(N-d)\left[ 1-\frac{ 1 + (p-1)\varepsilon }{\gamma _3  }\right]}.
\end{aligned}
\end{equation*}
Take $C_\ast>0$ such that
\begin{align*}
& C_\ast  ^{\frac{( q   - p   +1)(1-\varepsilon) }{\gamma _3  }} 
  = r^{ p  \frac{(N-d )(\gamma _1  - \gamma _2  )}{\gamma _1  \gamma _2  }\frac{\varepsilon}{\gamma _3  } -\frac{p   }{\gamma _3  } + (N-d)\left[ 1-\frac{ 1 + (p- 1)\varepsilon }{\gamma _3  }\right]}.
\end{align*}
Hence $C_\ast = r^{-\tau}$, where
\begin{align*}
& \tau  = 
-\left\{ p \frac{(N-d)(\gamma _1  - \gamma _2  )}{\gamma _1  \gamma _2  }\frac{\varepsilon}{\gamma _3  } - \frac{p   }{\gamma _3  } \right.
  \left.+ (N-d)\left[ 1-\frac{1 + (p-1) \varepsilon}{\gamma _3  }\right]\right\}\\
& \cdot\left[\frac{ ( q   - p   +1)(1-\varepsilon )}{\gamma _3  }\right]^{-1}
 \in \left( \frac{p  }{q     -p    +1 } , \frac{p  }{q     -p    +1  } +\varepsilon _0 \right).
\end{align*}
Thus we get
$$
m_{s} \leq C_{11}^{\frac{\gamma _3  }{p  }\sigma} \frac{m_{t}^{\theta}}{(t-s)^{\sigma}},
$$
where
\begin{align*}
\theta=\left[\frac{p   }{\gamma _3  }+ 1-\frac{1 + (p-1) \varepsilon }{\gamma _3  }\right] \left[1+ \frac{q  (1-\varepsilon)}{\gamma _3  }\right]^{-1} < 1\quad \text { and }\quad
\sigma=\frac{p    }{\gamma _3  } \left[1+ \frac{q   (1-\varepsilon)}{\gamma _3  }\right]^{-1}. 
\end{align*}
By virtue of Lemma \ref{64}, we derive 
$$
m_{1 /2} \leq C_{12} .
$$
From the substitution $u_1=C_\ast w_1$ we obtain
$$
\operatorname{ess} \operatorname{sup} \left\{u_1(x^\prime)\:|\: x \in V_{\ell , r/2} \cap \Omega _+ \right\}=C_\ast m_{1/2} \leq C_{12} C_\ast =  C_{12} r^{-\tau}.
$$
Therefore, we  conclude  the proof of the proposition. \end{proof}

Proceeding similarly to  Lemma \ref{19} and Proposition \ref{66}, we have the following results.
\begin{lemma} \label{69}  (Without the  boundary condition)  Let $\Omega = \mathbb{R}^{d} \times B _R$ and $\Gamma = \{x\in \mathbb{R} ^N \:|\: x_{d+1} = \cdots =x_N=0\}$, where $B_R=\{(x_{d+1}, $ $\ldots, x_N)\in \mathbb{R}^{N-d}\:|\: |(x_{d+1}, \ldots, x_N)| <R\}$, $R\in (0,\infty]$,  $N\geq 3$ and $d\in \{ 1, \ldots, N-2 \}$. If $d=0$, we write $\Omega = B_R$ and $\Gamma = \{x\in \mathbb{R} ^N \:|\: x=0\}$. Assume that \ref{43} is satisfied and $1<p<N-d$. Under the assumption $(A_2)$, suppose that $u \in W_{\operatorname{loc}  }^{1, p  }(\bar{\Omega}  \backslash \Gamma ; \vartheta _{\alpha}) \cap L_{\operatorname{loc}  }^{\infty}(\bar{\Omega}  \backslash \Gamma)$ satisfies  
\begin{equation} \label{72} 
\int_{\Omega}\left( \left\langle A(\nabla u) , \nabla \varphi \right\rangle +  a( u) \varphi  +  g (u) \varphi  \right)   \textnormal{d} \vartheta _{\alpha  } \leq 0,
\end{equation}
for all $\varphi \in W_{\operatorname{loc}  }^{1, p  }(\bar{\Omega}  \backslash \Gamma ; \vartheta _{\alpha}) \cap L_{\operatorname{loc}  }^{\infty}(\bar{\Omega}  \backslash \Gamma)$,  $\varphi \geq 0$, with $\operatorname{supp}\varphi \subset \bar{\Omega}  \backslash \Gamma$. Then, if $\varepsilon _0 >0$ and  $0<r<\ell<r_0$ we have the estimate
$$
\max \{u_1 , 0\} \leq C r^{-\tau}, \quad \text { \textit{a.e.} in } \{x^\prime \in B_R\:|\: ||x^\prime| - \ell| <r\},
$$
where $C=C\left(N, \mu, \alpha  , d , p , q , \|u_2 \|_{q+1,\mathbb{R}^d,\alpha} , \|u_2\| _{1,p, \mathbb{R}^d,\alpha}     , \mathcal{U} \right)>0$ and $\tau=\tau\left(N, \mu , \alpha ,  d , \varepsilon _0 ,  p , q   \right)\in (\frac{p}{q    - p+1} , \frac{p}{q    - p+1} +\varepsilon _0 )$.
\end{lemma}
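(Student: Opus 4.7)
The plan is to mirror the three-step proof of Lemma \ref{19} almost verbatim, with only two cosmetic changes: the boundary integral of $b(u)z_k$ disappears because $\Omega=\mathbb{R}^d\times B_R$ carries no boundary term in the weak formulation \eqref{72}, and the role formerly played by the boundary $\mu_1$-integral is now taken by the nonnegative interior integral coming from $a(u)z_k$. The Poincar\'e-type inequality \eqref{51} is in turn applied on the full ball $B_{2r_0}\subset\mathbb{R}^{N-d}$ instead of the half-ball $B^+_{2r_0}$; its proof by contradiction using compactness of $W^{1,p}(B_{2r_0})\hookrightarrow L^q(B_{2r_0})$ is identical on the full ball.

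Concretely, I would set $u_1=C_\ast w_1$ with $C_\ast>0$ to be fixed, let $w=w_1u_2$ and $m_t=\operatorname{ess\,sup}\{w_1(x')\:|\:x\in V_{\ell,tr}\cap\Omega_+\}$, and use the same admissible test function
\[
z_k(x)=\max\{w(x)-(m_t\xi(||x'|-\ell|)+k)u_2(x''),\,0\}\,\text{ on } V_{\ell,tr},\qquad k\in[0,\mathcal{K}),
\]
extended by zero outside $V_{\ell,tr}$, with the same smooth cut-off $\xi$. Inserting $\varphi=z_k$ in \eqref{72} and invoking \ref{43} together with $a(u)\operatorname{sign} u\geq\mu_1|u|^{p-1}$ yields the analog of \eqref{11} in which the boundary $\mu_1$-integral is replaced by $\mu_1 C_\ast^{p-1}\int_{\Omega_k}z_k^p\,d\vartheta_\alpha$; this is nonnegative and is simply discarded, exactly as in Lemma \ref{19}. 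Steps 2 and 3 of that proof then transfer unchanged: apply \eqref{51} on $B_{2r_0}$, follow with H\"older and Young to reach the counterpart of \eqref{17}, integrate over $k\in[0,\mathcal{K}]$ using $\mathcal{K}\geq m_s$ and $\int_{\Omega_0'}\hat{z}_0\,dx'\leq m_t|V_{\ell,tr}'|$, and finally choose $C_\ast=r^{-\tau}$ with
\[
\tau\in\Bigl(\tfrac{p}{q-p+1},\,\tfrac{p}{q-p+1}+\varepsilon_0\Bigr)
\]
so that the exponents of $r$ balance. The outcome is the iteration $m_s\leq C\,m_t^\theta/(t-s)^\sigma$ for $1/2\leq s<t\leq 1$ with the same constants $\theta<1$ and $\sigma>0$; Lemma \ref{64} then delivers $m_{1/2}\leq C_{12}$, so that $\operatorname{ess\,sup}\{u_1\:|\:x\in V_{\ell,r/2}\cap\Omega_+\}=C_\ast m_{1/2}\leq C_{12}r^{-\tau}$, which is the desired bound.

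There is essentially no genuinely new obstacle: the sign of $\mu_1$ is immaterial because the $\mu_1$-term is only discarded, and the Poincar\'e-type inequality on the full ball $B_{2r_0}$ requires only the standard compact embedding on a bounded Lipschitz domain in $\mathbb{R}^{N-d}$. The only bookkeeping point is to verify that the local norms $\|u_2\|_{q+1,\mathbb{R}^d,\alpha}$ and $\|u_2\|_{1,p,\mathbb{R}^d,\alpha}$ enter the final constant $C$ in exactly the same way as in Lemma \ref{19}, which is immediate from the factorization $u=u_1u_2$ and the assumption $(A_2)$.
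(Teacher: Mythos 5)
Your proposal is correct and matches the paper's intent exactly: the paper offers no separate argument for this lemma, stating only that it follows by proceeding as in Lemma \ref{19}, and your adjustments (no boundary integral in \eqref{72}, the nonnegative $a(u)z_k$ term discarded just as the $\mu_1$-boundary term was, and the Poincar\'e-type inequality \eqref{51} transferred to the full ball $B_{2r_0}$) are precisely the modifications needed. The only cosmetic mismatch is that your conclusion is stated on $V_{\ell,r/2}$ while the lemma writes the annulus of half-width $r$, which is resolved by a trivial relabeling of $r$ and adjustment of constants.
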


\begin{proposition}   (Without the  boundary condition)   Assume that \ref{43} and \ref{45} are satisfied and $1<p<N-d$. Under the assumption $(A_2)$, suppose that $u \in W_{\operatorname{loc}  }^{1  , p  }( \bar{\Omega}  \backslash \Gamma ; \vartheta _{\alpha}) \cap L_{\operatorname{loc}  }^{\infty}(\bar{\Omega } \backslash \Gamma)$ is a (weak) solution  in $\bar{\Omega } \backslash \Gamma$ of equation \eqref{71}. Then, if  $\varepsilon _0 >0$, we have the estimate
$$
|u_1(x^\prime)| \leq C |x^\prime|  ^{-\tau}, \quad \text { \textit{a.e.} in } \left\{x^\prime \in B_R \:|\: 0<|x^\prime|  <r_0\right\},
$$
where $C=C\left(N,  \mu,\alpha , d , p, q  , \|u_2 \|_{q+1,\mathbb{R}^d,\alpha} , \|u_2\| _{1,p, \mathbb{R}^d,\alpha}    , \mathcal{U} \right)>0$ and  $\tau=\tau\left(N, \mu, \alpha , d , \varepsilon _0 , p, q     \right)\in (\frac{p}{q    - p+1} , \frac{p}{q    - p+1} +\varepsilon _0 )$.
\end{proposition}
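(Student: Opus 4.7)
\medskip

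\noindent\textbf{Proof proposal.} The plan is to mimic the passage from Lemma \ref{19} to Proposition \ref{66}, but using Lemma \ref{69} in place of Lemma \ref{19}, the key observation being that a weak solution of \eqref{71} is simultaneously a sub- and a supersolution, and that hypothesis \ref{45} lets us convert the bound on the positive part into a bound on the negative part. Since $u$ is a weak solution of \eqref{71} in $\bar{\Omega}\setminus\Gamma$, choosing nonnegative test functions $\varphi\in W^{1,p}_{\operatorname{loc}}(\bar\Omega\setminus\Gamma;\vartheta_\alpha)\cap L^\infty_{\operatorname{loc}}(\bar\Omega\setminus\Gamma)$ with $\operatorname{supp}\varphi\subset\bar\Omega\setminus\Gamma$ yields inequality \eqref{72} (in fact an equality), so Lemma \ref{69} applies and gives
$$
\max\{u_{1},0\}\le C\,r^{-\tau}\quad\text{a.e. in }V'_{\ell,r/2},\qquad 0<r<\ell<r_{0},
$$
with $\tau\in\bigl(\tfrac{p}{q-p+1},\tfrac{p}{q-p+1}+\varepsilon_{0}\bigr)$ and $C$ depending only on the quantities listed in the statement.

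Next I would handle the negative part by considering $v:=-u$. By \ref{45}, $A$ is odd, so $A(\nabla v)=-A(\nabla u)$; define $\tilde a(s):=-a(-s)$ and $\tilde g(s):=-g(-s)$, which satisfy $\tilde a(s)\operatorname{sign} s\ge \mu_{1}|s|^{p-1}$ and $\tilde g(s)\operatorname{sign} s\ge |s|^{q}$, i.e.\ structural condition \ref{43} is preserved. Thus $v$ is a weak solution of \eqref{71} with $a,g$ replaced by $\tilde a,\tilde g$; in particular it satisfies \eqref{72}. Moreover, writing $u=u_{1}u_{2}$ as in $(A_{2})$ with $u_{2}>0$, the factorization $v=(-u_{1})\cdot u_{2}$ exhibits $v$ in the same form with $v_{1}:=-u_{1}$ and $v_{2}:=u_{2}$, so $(A_{2})$ carries over verbatim (all norms of $u_{2}$ are unchanged). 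Applying Lemma \ref{69} to $v$ therefore gives
$$
\max\{-u_{1},0\}\le C\,r^{-\tau}\quad\text{a.e. in }V'_{\ell,r/2},\qquad 0<r<\ell<r_{0},
$$
with the same exponent $\tau$ and an analogous constant $C$.

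Adding the two bounds (and noting $u_{1}^{+}u_{1}^{-}=0$) gives
$$
|u_{1}(x')|\le C\,r^{-\tau}\quad\text{a.e. in }V'_{\ell,r/2},\qquad 0<r<\ell<r_{0}.
$$
Finally I would use a standard shell cover: given any $x'$ with $0<|x'|<r_{0}$, set $\ell:=|x'|$ and $r:=|x'|/2$, so that $x'\in V'_{\ell,r/2}=\{y'\in B_{R}:|\,|y'|-|x'|\,|<|x'|/4\}$. The estimate above then yields
$$
|u_{1}(x')|\le C\,(|x'|/2)^{-\tau}=2^{\tau}C\,|x'|^{-\tau},
$$
which is the desired pointwise bound after renaming the constant.

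The genuine work is already done inside Lemma \ref{69}; here the only point that requires real care, and what I would regard as the main obstacle, is verifying that hypothesis \ref{45} together with the sign conditions in \ref{43} are exactly what is needed to pass from the one-sided (subsolution) estimate to a two-sided estimate by reflecting $u\mapsto -u$, and that the decomposition in $(A_{2})$ is invariant under this reflection; the subsequent covering argument is routine.
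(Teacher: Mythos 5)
Your proposal is correct and follows essentially the same route the paper leaves implicit: a solution of \eqref{71} satisfies \eqref{72} for both $u$ and $-u$ (oddness \ref{45} and the sign structure in \ref{43} being preserved under $u\mapsto -u$, with $(A_2)$ unchanged since $v_1=-u_1$, $v_2=u_2$), so Lemma \ref{69} bounds both $u_1^{+}$ and $u_1^{-}$, and a shell argument gives the pointwise estimate. The only minor point is that, since each shell estimate holds only a.e., one should take a countable family of shells (e.g. $\ell_k=(3/4)^k r_0$, $r_k=\ell_k/2$) covering $\{0<|x'|<r_0\}$ rather than choosing $\ell=|x'|$ for each individual point; this is a routine adjustment that only changes the constant.
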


\subsection{Proof of Proposition \ref{266}} The proof follows from the next lemma.

\begin{lemma} \label{219}
Assume that the condition  \ref{43} is  satisfied, $p   -1 < \beta $ and   $\mu _1>0$. Under the assumption $(A_1)$, suppose that   $u\in  W^{1, p  } _{\operatorname{loc}} (\bar{\Omega} \backslash \Gamma  )$ $\cap L^{\infty} _{\operatorname{loc}} (\bar{\Omega}  \backslash \Gamma )$ satisfies 
\begin{equation} \label{222}
\int _{\Omega }\left( \left\langle A(\nabla u) , \nabla \varphi \right\rangle +  g (u) \varphi  \right) \textnormal{d}x  + \int _{\partial \Omega} b( u)   \varphi \textnormal{d}\vartheta _{\beta} \leq 0,
\end{equation}
for all $\varphi \in W_{\operatorname{loc}}^{1, p }(\bar{\Omega}  \backslash \Gamma ) \cap L_{\operatorname{loc}}^{\infty}(\bar{\Omega}  \backslash \Gamma )$, $\varphi \geq 0$, with   $\operatorname{supp} \varphi  \subset \bar{\Omega } \backslash \Gamma $. Then, if $\varepsilon _0 >0 $ and  $0<r<\ell < r_0$ we have the estimate
\begin{equation} \label{242}
\max \{u ,0\}   \leq C r^{-\tau}, \quad \text { \textit{a.e.} in } V_{\ell, r/2},
\end{equation}
where $C=C(N,  \mu   , \beta , d , \varepsilon _0 , p , q  ,  C_0 ,\mathcal{U} )>0$ and  $\tau=\tau (N,  \mu  ,  \beta , d ,  \varepsilon _0 , p , q   )\in ( \frac{p}{q- p   +1} ,  \frac{p}{q- p   +1} +\varepsilon _0)$. 
\end{lemma}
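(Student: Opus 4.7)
The plan is to carry out a De Giorgi–Moser iteration on the level sets of $w$ modeled on the proof of Lemma \ref{19}, with two essential modifications: the product structure of $(A_2)$ is replaced by the pointwise $x''$-decay furnished by $(A_1)$, and the compact-domain Poincaré inequality \eqref{51} is replaced by the equivalent-norm and embedding results on the unbounded weighted domain $\Omega$ provided by Propositions \ref{63}, \ref{68}, and \ref{59}. The hypotheses $p-1<\beta$ and $\mu_1>0$ are precisely what is needed to activate Proposition \ref{68}.

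Write $u=C_\ast w$ with $C_\ast>0$ to be chosen, and for $1/2\leq t\leq 1$ set $m_t=\operatorname{ess}\sup\{w(x)\,|\,x\in V_{\ell,tr},\,w(x)>0\}$. With the same cutoff $\xi$ in $||x'|-\ell|$ as in Lemma \ref{19}, introduce
$$
z_k(x)=\max\{w(x)-m_t\,\xi(||x'|-\ell|)-k,\,0\}\,\chi_{V_{\ell,tr}}(x),\qquad 0\leq k<m_s.
$$
By $(A_1)$, $w(x)\leq C_0 m_t(1+|x''|)^{-\delta}$ on $V_{\ell,tr}$; since $\delta>d$, this forces the support $\Omega_k=\{z_k>0\}$ to have compact projection onto the $x''$-axis (controlled by $(C_0m_t/k)^{1/\delta}$) and to be disjoint from $\Gamma$. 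Together with $\delta p>d$, this makes $z_k$ an admissible test function for \eqref{222} with $z_k\in W^{1,p}(\Omega,\vartheta_\alpha)$ and $\operatorname{supp}z_k\subset \bar\Omega\setminus\Gamma$.

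Substituting $\varphi=z_k$ into \eqref{222} and applying \ref{43} together with Young's inequality in the manner of Steep 1 of Lemma \ref{19} yields
$$
C_{13}\,C_\ast^{p-1}\|\nabla z_k\|_{p,\Omega,\alpha}^p+\mu_1 C_\ast^{p-1}\|z_k\|_{p,\partial\Omega,\beta}^p+C_\ast^q k^q\int_{\Omega_k}z_k\,\textnormal{d}\vartheta_\alpha \leq C\Bigl[\tfrac{m_t}{r(t-s)}\Bigr]^{p}\vartheta_\alpha(\Omega_k).
$$
Where Lemma \ref{19} invoked \eqref{51}, I will instead use Proposition \ref{68} to control $\|z_k\|_{p,\Omega,\alpha}$ by the boundary-plus-gradient norm on the right-hand side, and Proposition \ref{59} to upgrade this into $\|z_k\|_{q_0,\Omega,\alpha_0}$ for a suitably chosen pair $(q_0,\alpha_0)$ with $q_0\in(p,\,pq/(q-p+1))$. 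Hölder's inequality on $\Omega_k$ then estimates $\int z_k\,\textnormal{d}\vartheta_\alpha$ in terms of $\|z_k\|_{q_0,\Omega,\alpha_0}$ and a power of $\vartheta_\alpha(\Omega_k)$, and the weighted Young interpolation $a^\varepsilon b^{1-\varepsilon}\leq \varepsilon a+(1-\varepsilon)b$ as in Steep 2 of Lemma \ref{19} produces a pointwise-in-$k$ bound of the shape
$$
k^{A}C_\ast^{B}\leq C\,\Bigl[\tfrac{m_t}{r(t-s)}\Bigr]^{p/\gamma_3}\,r^{\mathrm{extra}}\,\vartheta_\alpha(\Omega_k).
$$

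Finally, integrating in $k\in[0,m_s]$ with the identity $\frac{\textnormal{d}}{\textnormal{d}k}\!\int z_k\,\textnormal{d}\vartheta_\alpha=-\vartheta_\alpha(\Omega_k)$, using $\int z_0\,\textnormal{d}\vartheta_\alpha\leq m_t\,\vartheta_\alpha(V_{\ell,tr})$, and choosing $C_\ast=r^{-\tau}$ with $\tau\in(\tfrac{p}{q-p+1},\tfrac{p}{q-p+1}+\varepsilon_0)$ so as to cancel the residual powers of $r$, I arrive at a recursion of the type $m_s\leq C\,m_t^{\theta}/(t-s)^{\sigma}$ with $\theta\in(0,1)$. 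Lemma \ref{64} then delivers $m_{1/2}\leq C$, whence $u\leq C_\ast m_{1/2}\leq Cr^{-\tau}$ a.e.\ on $V_{\ell,r/2}$, proving \eqref{242}. The main obstacle I expect is the combinatorial choice of the exponents $q_0,\alpha_0$ and the Young interpolation parameter $\varepsilon$ so that, on the unbounded weighted domain $\Omega$ (rather than the ball $B^+_{2r_0}$ used in Lemma \ref{19}), the admissibility conditions of Propositions \ref{63}, \ref{68}, \ref{59} are satisfied simultaneously with the requirements $\theta<1$ and $\tau\in(\tfrac{p}{q-p+1},\tfrac{p}{q-p+1}+\varepsilon_0)$.
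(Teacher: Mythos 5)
Your proposal follows essentially the same route as the paper's own (sketched) proof: the same level-set test functions $z_k$ built from $u=C_\ast w$ and the cutoff $\xi$, the same use of $(A_1)$ to bound the $x''$-extent of $\operatorname{supp} z_k$ by a power of $C_0 m_t/k$, the replacement of the Poincar\'e inequality \eqref{51} by Propositions \ref{68} and \ref{59} (which is exactly where $\mu_1>0$ and $p-1<\beta$ enter), and the same H\"older/Young interpolation, integration in $k$, choice $C_\ast=r^{-\tau}$, and Lemma \ref{64}. The exponent bookkeeping you flag as the main obstacle is carried out in the paper by propagating the extra factor $(m_t/k)^{d(\gamma_1-\gamma_2)/(\delta\gamma_1\gamma_2)}$ from the support bound through estimates \eqref{239}--\eqref{217}, and it closes as you anticipate.
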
 

\renewcommand*{\proofname}{Sketch of the proof}
\begin{proof} 

We follow the proof of Lemma \ref{19}. Let $u = C_\ast w $, where  $C_\ast>0$ is a  number that will be determined below. We assume that $|\{x \in V_{\ell,r/2}  \:|\: w(x)>0\} |\neq 0$, otherwise \eqref{242} is immediate.  Set $\Omega _+   =\{x \in V_{\ell,r}  \:|\: w(x)>0\}$. Take
$$
m_{t}=\operatorname{ess} \operatorname{sup} \left\{  w(x)\:|\: x \in V_{\ell,t r} \cap \Omega _+   \right\}, \quad 1/2 \leq t \leq 1 .
$$
 Let $1/2 \leq s<t \leq 1$.  Define the functions $z : \Omega  \rightarrow \mathbb{R}$, $z_k  :\Omega  \rightarrow \mathbb{R}$, by
\begin{align*}
z (x)&=   w(x) - m_{t}   \xi \left(\left|  |x^\prime|   - \ell \right|\right)  ,\\
z_k  (x)&=\left\{ \begin{aligned}
& \max \left\{ w(x)-   \left( m _{t}   \xi \left(\left|  |x^\prime|  - \ell \right|\right) + k\right) , 0\right\}  & & \text { if } x\in  V_{\ell , tr},\\
& 0 & & \text { if } x\in \Omega  \backslash V_{\ell , tr},
\end{aligned}
\right. 
\end{align*}
where  $0 \leq k\leq \operatorname{ess} \operatorname{sup} _{\Omega _+ } z$, and $\xi : \mathbb{R} \rightarrow \mathbb{R}$ is a smooth function satisfying: $\xi = 0$ on $(-\infty , s r]$, $\xi  = 1$ on $ \left[ \frac{s+t}{2} r , \infty \right)$,
$$
0 \leq \xi \leq 1 \quad \text { and } \quad \left| \xi^{\prime}\right| \leq \frac{C_1}{r(t-s)} \quad \text { on  }    \mathbb{R}.
$$

\noindent By $(A_1)$, we have
\begin{align}
&|\operatorname{supp} z_k|\leq C_2 r^{N-d}\left|\left\{ x^{\prime \prime}\in \mathbb{R} ^{d}\:|\:  \frac{C_0 m _t}{k}\geq (1+ |x^{\prime \prime}|)^{\delta} \right\}\right| <\infty, \quad \text { if } k>0,\label{27}\\
& \int_{\{x \in V_{\ell, t r}\:|\: z_0 (x)>0\}} z_0 \textnormal{d}x \leq \int  _{\{x \in V_{\ell, t r}\:|\: z_0 (x)>0\}} w \textnormal{d}x \leq C_{2} m_t r^{N-d}. \nonumber
\end{align}

\noindent Take $k \in (0, \mathcal{K} )$, where $\mathcal{K} = \sup \{k\in [0 , \operatorname{ess} \operatorname{sup} _{\Omega   ^\prime} z] \:|\:  |\{ x\in V_{\ell , tr} \:|\: z_k (x) >0\} |\neq 0\}$. Observe that $\mathcal{K} \geq m_s \geq m_{1/2} $. Substituting $\varphi =  z_k$ into \eqref{222}, we obtain
\begin{equation}\label{237}
\begin{aligned} 
&\int_{\Omega  _k }    C_{\ast} ^{p - 1}|\nabla w|^{p}    \textnormal{d}x   +   \int_{\partial \Omega  _k \cap \partial \Omega }   C_\ast ^{p-1}  z_k ^p   \textnormal{d}\vartheta _{\beta}  
+ \int_{\Omega  _k}    C_\ast^{q   } k^{q   } z_k   \textnormal{d}x \\
&\leq  C_3   \int_{\Omega  _k } \frac{m_t C_\ast ^{p-1}}{r(t-s)}  | \nabla w|^{p-1}  \textnormal{d}x,
\end{aligned}
\end{equation}
where $\Omega _k  =\{x \in V_{\ell, t r}\:|\: z_k (x)>0\}$. Using Young's inequality, we obtain
\begin{equation} \label{238}
\begin{aligned}
  \int_{\Omega  _k }  \frac{  m _t  C_\ast ^{p-1}}{r(t-s)}   |\nabla w|^{p-1} \textnormal{d}x  
\leq   C_\ast ^{p-1} C_4 \left[\frac{m _t}{r(t-s)}\right]^{p} |\Omega  _k | + \varepsilon _1 \int _{\Omega _k}C_\ast ^{p-1} |\nabla w|^{p}   \textnormal{d}x .
\end{aligned}
\end{equation}
\noindent Take  $\varepsilon _1 = \frac{1}{2C_2 }$. Then, by \eqref{237} and \eqref{238},
\begin{align*}
\frac{1}{2}\int_{\Omega _k   }     C_{\ast} ^{p- 1}  |\nabla w|^{p}    \textnormal{d}x  +  \int_{\partial \Omega  _k \cap \partial \Omega} C_\ast ^{p-1} z_k ^p \textnormal{d}\vartheta _{\beta}  
+ \int_{\Omega  _k } C_\ast ^{q   } k^{q   }z_k  \textnormal{d}x  \leq    C_5  C_\ast ^{p   -1 } \left[\frac{m_t}{r (t-s)} \right]^{p  }  |\Omega _k|.
\end{align*}
Note that $\nabla z_{k}=\nabla w-m_{t}  \xi ^{\prime}  \nabla | |x^\prime|   - \ell |$ in $\Omega  _k $. Then, 
\begin{equation} \label{214}
\begin{aligned}
C_{\ast} ^{p - 1} \int _{\Omega  _k }    |\nabla z_{k}|^{p  } \textnormal{d}x  +  C_{\ast} ^{p - 1}\int _{\partial \Omega _k \cap \partial \Omega}  z_k ^{p  }  \textnormal{d}\vartheta _{\beta}  
  + C_\ast ^{q   } k^{q }\int_{\Omega  _k } z_k  \textnormal{d}x \leq    C_6  C_\ast ^{p -1 } \left[\frac{m_t}{r (t-s)} \right]^{p  } |\Omega _k | .
\end{aligned}
\end{equation}
Choose  $\gamma _1  , \gamma _2  >0$ and $\varepsilon \in (0,1)$  such that $p <\gamma _2 <\gamma _1 $, $\frac{N}{p} - \frac{N}{\gamma _1} < 1$, $
\frac{p  }{q     -p    +1 } + \varepsilon_0 > \frac{ p}{(q   - p   +1)(1-\varepsilon)}$ and $1 > (N -d)\left( 1- \frac{p}{\gamma _1}  \right) $.  Then, by Propositions \ref{68} and \ref{59}, and \eqref{27}, we have
\begin{equation}\label{239} 
\begin{aligned}
&\left(\int_{\Omega  _k}z_{k}^{\gamma _2  } \textnormal{d}x \right)^{\frac{1}{\gamma _2  }}\leq \left(\int _{\Omega  _k} z_k ^{\gamma _1} \textnormal{d}x \right)^{\frac{1}{\gamma _1}} |\Omega _k|^{\frac{\gamma _1 - \gamma _2}{\gamma _1 \gamma _2}} \\  
&\leq  C_7 \left(\frac{m_t}{k}\right)^{\frac{d(\gamma _1 - \gamma _2)}{\delta\gamma _1 \gamma _2}} r^{\frac{(N-d)(\gamma _1   - \gamma _2  )}{\gamma _1  \gamma _2  }} \left(\int _{\Omega  _k}\left|\nabla z_{k}\right|^{p} \textnormal{d}x +  \int _{ \partial \Omega _k \cap \partial \Omega} z_{k}^{p} \textnormal{d}\vartheta _{\beta}   \right)^{\frac{1}{p }}.
\end{aligned}
\end{equation}
Using Hölder's inequality, we get
\begin{align}
&\int_{\Omega _k} z_{k} \textnormal{d}x  \leq \left(\int_{\Omega  _k} z_{k}^{ \gamma _2  } \textnormal{d}x \right)^{\frac{1}{\gamma _2  }}|\Omega  _k |^{\frac{\gamma _2   -1}{ \gamma _2   }} \label{240}.
\end{align} 
From \eqref{214} - \eqref{240}, we have
\begin{equation} \label{215}
\begin{aligned}
&  C_\ast  ^{p   - 1} \left[ |\Omega  _k | ^{-\frac{\gamma _2   -  1}{\gamma _2  }}  \left(\frac{m_t}{k}\right)^{-\frac{d(\gamma _1 - \gamma _2)}{\delta \gamma _1 \gamma _2}}r^{-\frac{(N-d)(\gamma _1  - \gamma _2  )}{\gamma _1  \gamma _2  }}   \int_{\Omega _k} z_k \textnormal{d}x  \right] ^{p } 
  +  C_\ast ^{q} k^q    \int _{\Omega _k} z_k \textnormal{d}x \\
& \leq    C_8 C_\ast ^{p     -1 } \left[\frac{m_t}{r (t-s)} \right]^{p   } |\Omega _k | .
\end{aligned}
\end{equation}

\noindent Therefore,
\begin{equation} \label{217}
\begin{aligned}
&  k^{\frac{q     (1-\varepsilon)}{\gamma _3}}  C_\ast ^{\frac{(  q      - p   +1)(1-\varepsilon )}{\gamma _3  }} \\
& \leq   C_{9} \left(   \int_{\Omega _k } z_k \textnormal{d}x  \right)^{-\frac{ 1 + (p- 1)\varepsilon }{\gamma _3  }} 
 \left(\frac{m_t}{k}\right)^{p \frac{d(\gamma _1  - \gamma _2  )}{\delta\gamma _1  \gamma _2  }\frac{\varepsilon}{\gamma _3  }}  r^{p \frac{(N-d)(\gamma _1  - \gamma _2  )}{\gamma _1  \gamma _2  }\frac{\varepsilon}{\gamma _3  }} \left[\frac{m_t}{r (t-s)} \right]^{\frac{p   }{\gamma _3  }}   |\Omega _k| ,
\end{aligned}
\end{equation}
where $\gamma _3   =  1 +p \frac{\gamma _2   -  1}{\gamma _2  }\varepsilon $ and $k>0$. Finally, through the argument  in the proof of Lemma \ref{19}, we  conclude the proof. \end{proof}

Proceeding in the same way as in  Lemma \ref{219} and Proposition \ref{266}, we have the following results.

\begin{lemma} \label{269} (Without the  boundary condition)  Let $\Omega = \mathbb{R}^{d} \times B _R$ and $\Gamma = \{x\in \mathbb{R} ^N \:|\: x_{d+1} = \cdots =x_N=0\}$, where $B_R=\{(x_{d+1}, $ $\ldots, x_N)\in \mathbb{R}^{N-d}\:|\: |(x_{d+1}, \ldots, x_N)| <R\}$, $R\in (0,\infty]$,  $N\geq 3$ and $d\in \{ 1, \ldots, N-2 \}$. If $d=0$, we write $\Omega = B_R$ and $\Gamma = \{x\in \mathbb{R} ^N \:|\: x=0\}$. Assume that \ref{43} is satisfied and $\mu_1 >0$. Under the assumption $(A_1)$, suppose that $u \in W_{\operatorname{loc}  }^{1, p  }(\bar{\Omega}  \backslash \Gamma) \cap L_{\operatorname{loc}  }^{\infty}(\bar{ \Omega } \backslash \Gamma)$ satisfies  
\begin{equation} \label{272}
\int_{\Omega}\left( \left\langle A(\nabla u) , \nabla \varphi \right\rangle +  a( u) \varphi  +  g (u) \varphi  \right)   \textnormal{d}x \leq 0,
\end{equation}
for all $\varphi \in W_{\operatorname{loc}  }^{1, p  }(\bar{ \Omega } \backslash \Gamma) \cap L_{\operatorname{loc}  }^{\infty}(\bar{ \Omega  }\backslash \Gamma)$,  $\varphi \geq 0$, with $\operatorname{supp}\varphi \subset \bar{ \Omega } \backslash \Gamma$. Then, if $\varepsilon _0 >0$ and  $0<r<\ell<r_0$ we have the estimate
$$
\max \{u , 0\} \leq C r^{-\tau} \quad\text{\textit{a.e.} in}\quad V_{\ell, r / 2} ,
$$
where $C=C\left(N, \mu , d , p , q , C_0 ,\mathcal{U} \right)>0$ and $\tau=\tau\left(N, \mu,  d , \varepsilon _0 ,  p , q   \right)\in (\frac{p}{q    - p+1} , $ $ \frac{p}{q    - p+1} +\varepsilon _0 )$.
\end{lemma}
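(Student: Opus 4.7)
My plan is to imitate Lemma \ref{219} line by line, with the boundary integral replaced by the interior absorption provided by the term $a(u)\varphi$. I would first set $u = C_* w$ for a constant $C_* > 0$ to be determined, assume $|\{x \in V_{\ell, r/2} : w(x) > 0\}| \neq 0$ (otherwise the bound is trivial), and introduce the same cutoff $\xi$ and the truncation
\[
z_k(x) = \max\bigl\{w(x) - m_t \xi(\left| |x'| - \ell \right|) - k,\, 0\bigr\} \chi_{V_{\ell, tr}}(x), \quad k \in (0, \mathcal{K}),
\]
where $m_t = \operatorname{ess}\operatorname{sup}\{w(x) : x \in V_{\ell, tr} \cap \Omega_+\}$ and $\mathcal{K}$ is defined as in Lemma \ref{219}. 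Assumption $(A_1)$ then bounds $|\operatorname{supp} z_k| \leq C_2 r^{N-d} \bigl|\bigl\{x'' \in \mathbb{R}^d : (1+|x''|)^\delta \leq C_0 m_t/k\bigr\}\bigr|$ for $k > 0$, exactly as in \eqref{27}.

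Substituting $\varphi = z_k$ into \eqref{272} and applying \ref{43}, the decisive new ingredient is that the interior absorption yields
\[
\int_{\Omega_k} a(C_* w)\, z_k \, \mathrm{d}x \geq \mu_1 C_*^{p-1} \int_{\Omega_k} w^{p-1} z_k \, \mathrm{d}x \geq \mu_1 C_*^{p-1} \int_{\Omega_k} z_k^p \, \mathrm{d}x
\]
since $w \geq z_k$ on $\Omega_k = \{z_k > 0\}$. This $\int z_k^p \, \mathrm{d}x$ contribution plays exactly the role that the boundary term $\int_{\partial\Omega_k \cap \partial\Omega} z_k^p \, \mathrm{d}\vartheta_\beta$ played in Lemma \ref{219}. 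After absorbing the $|\nabla w|^{p-1}$ term on the right via Young's inequality and using $\nabla z_k = \nabla w - m_t \xi' \nabla \left| |x'| - \ell \right|$ on $\Omega_k$, I obtain the analog of \eqref{214}:
\[
C_*^{p-1} \int_{\Omega_k} |\nabla z_k|^p \, \mathrm{d}x + \mu_1 C_*^{p-1} \int_{\Omega_k} z_k^p \, \mathrm{d}x + C_*^q k^q \int_{\Omega_k} z_k \, \mathrm{d}x \leq C_6 C_*^{p-1} \Bigl[\tfrac{m_t}{r(t-s)}\Bigr]^p |\Omega_k|.
\]

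For the Sobolev step I would choose $p < \gamma_2 < \gamma_1 < Np/(N-p)$ and $\varepsilon \in (0,1)$ exactly as in Lemma \ref{219}, and apply the standard unweighted Sobolev embedding $W^{1,p}(\mathbb{R}^N) \hookrightarrow L^{\gamma_1}(\mathbb{R}^N)$ directly to the extension of $z_k$ by zero (its support is bounded thanks to the cutoff in $x'$ and to $(A_1)$ in $x''$), together with H\"older's inequality in $|\Omega_k|$ and the $(A_1)$ bound on $|\operatorname{supp} z_k|$. This produces the analogs of \eqref{239}, \eqref{240}, and \eqref{215}; then Young's inequality splitting the exponents as $(p-1)\varepsilon$ versus $q(1-\varepsilon)$ yields the analog of \eqref{217}.

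The last step is to integrate the resulting inequality over $k \in (0, \mathcal{K})$, use the identity $-\frac{\mathrm{d}}{\mathrm{d}k} \int_{\Omega_k} z_k \, \mathrm{d}x = |\Omega_k|$, bound $\int_{\Omega_0} z_0 \, \mathrm{d}x \leq C_2 m_t r^{N-d}$ from $(A_1)$, and choose $C_* = r^{-\tau}$ so that $\tau$ lies in the prescribed range. This gives an iteration inequality $m_s \leq C \, m_t^\theta / (t-s)^\sigma$ with the same $\theta < 1$ and $\sigma$ as in Lemma \ref{219}, and Lemma \ref{64} finishes the argument by yielding $m_{1/2} \leq C_{12}$, whence the claim via $u = C_* w$. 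The step I expect to require the most care is the integration in $k$ because of the $(m_t/k)^{d(\gamma_1-\gamma_2)\varepsilon/(\delta\gamma_1\gamma_2\gamma_3)}$ factor inherited from the support bound \eqref{27} --- but since this exponent stays positive and integrable near $k = 0$ under the same parameter choices as in Lemma \ref{219}, no genuinely new estimate is required; the absence of Proposition \ref{68} here is compensated precisely by the hypothesis $\mu_1 > 0$.
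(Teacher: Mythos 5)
Your proposal is correct and matches the paper's intent: the paper proves this lemma only by asserting that one proceeds as in Lemma \ref{219}, and your adaptation — letting the interior absorption $a(u)z_k \geq \mu_1 C_*^{p-1}\int z_k^p$ supply the $L^p$-control that the boundary term supplied there, and replacing Propositions \ref{68} and \ref{59} by the standard unweighted Sobolev embedding applied to the zero extension of $z_k$ (legitimate since its support stays in $\{|x^\prime|<2r_0\}\subset\subset \mathbb{R}^d\times B_R$ in $x^\prime$ and is bounded in $x^{\prime\prime}$ by $(A_1)$) — is exactly the intended modification. The remaining steps (Young splitting, integration in $k$ with the $(m_t/k)$ factor from \eqref{27}, choice $C_*=r^{-\tau}$, and Lemma \ref{64}) carry over as you describe, so no gap.
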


\begin{proposition}  (Without the  boundary condition)    Assume that \ref{43} and \ref{45} are satisfied and $\mu_1>0$. Under the assumption $(A_1)$, suppose that $u \in W_{\operatorname{loc}  }^{1  , p  }( \bar{ \Omega } \backslash \Gamma) \cap L_{\operatorname{loc}  }^{\infty}(\bar{ \Omega  }\backslash \Gamma)$ is a (weak) solution  in $\bar{ \Omega } \backslash \Gamma$ of equation \eqref{271}. Then, if  $\varepsilon _0 >0$, we have the estimate
$$
|u(x)| \leq C |x|  ^{-\tau} \quad\text{\textit{a.e.} in}\quad \left\{x \in \mathbb{R}^{N-d} \:|\: 0<|x^\prime|  <r_0\right\},
$$
where $C=C\left(N,  \mu, d, p, q  , C_0    , \mathcal{U}\right)>0$ and  $\tau=\tau\left(N, \mu, d, \varepsilon _0 , p, q     \right)\in (\frac{p}{q    - p+1} , \frac{p}{q    - p+1} +\varepsilon _0 )$.
\end{proposition}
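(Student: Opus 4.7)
The plan is to prove this proposition as an immediate consequence of Lemma \ref{269}, by applying it to both $u$ and $-u$ and then choosing appropriate parameters $\ell, r$ at each point $x$. The structure will mirror the proof of Proposition \ref{266}, except that there is no boundary integral so the argument is slightly cleaner.

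First, since any (weak) solution $u$ of \eqref{271} automatically satisfies the subsolution inequality \eqref{272} with equality (and in particular $\leq 0$) for every nonnegative test function $\varphi \in W^{1,p}_{\operatorname{loc}}(\bar{\Omega}\backslash\Gamma)\cap L^\infty_{\operatorname{loc}}(\bar{\Omega}\backslash\Gamma)$ with $\operatorname{supp}\varphi\subset\bar{\Omega}\backslash\Gamma$, I can invoke Lemma \ref{269} directly and obtain
$$
\max\{u,0\}\leq C\, r^{-\tau} \quad \text{\textit{a.e.} in } V_{\ell,r/2},
$$
for every $0<r<\ell<r_0$, with $\tau\in\bigl(\tfrac{p}{q-p+1},\tfrac{p}{q-p+1}+\varepsilon_0\bigr)$.

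Next, to bound $\max\{-u,0\}$ I would use \ref{45}. Setting $v=-u$, the identity $A(\nabla v)=A(-\nabla u)=-A(\nabla u)$ rewrites \eqref{271} as
$$
-\operatorname{div}\bigl[A(\nabla v)\bigr]+\tilde{a}(v)+\tilde{g}(v)=0 \quad\text{in } \Omega,
$$
where $\tilde{a}(s):=-a(-s)$ and $\tilde{g}(s):=-g(-s)$. A direct computation shows $\tilde{a}(v)\operatorname{sign} v\geq \mu_1|v|^{p-1}$ and $\tilde{g}(v)\operatorname{sign} v\geq|v|^q$, so hypothesis \ref{43} is preserved. Assumption $(A_1)$ is symmetric in $u^+$ and $u^-$ (since $v^\pm=u^\mp$ with the same decay in $|x^{\prime\prime}|$), hence it transfers to $v$ without change. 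A second application of Lemma \ref{269} then yields
$$
\max\{-u,0\}\leq C\, r^{-\tau} \quad \text{\textit{a.e.} in } V_{\ell,r/2}.
$$

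Combining the two bounds gives $|u|\leq C\,r^{-\tau}$ a.e.\ in $V_{\ell,r/2}$. Finally, for almost every point $x$ with $0<|x^{\prime}|<r_0/2$, I take $\ell=|x^{\prime}|$ and $r=\ell/2$; then $x\in V_{\ell,r/2}$, which produces $|u(x)|\leq C(|x^{\prime}|/2)^{-\tau}\leq C'|x^{\prime}|^{-\tau}$ (and hence also $\leq C'|x|^{-\tau}\,2^{\tau}$ after enlarging the constant, recovering the form stated). I do not anticipate a genuine obstacle here: the only nontrivial verification is the sign-preservation of the reduced equation for $v=-u$, and the only bookkeeping is ensuring that the constants and the exponent $\tau$ in Lemma \ref{269} depend only on the quantities allowed by the statement, which they do since $\mathcal{U}$, $C_0$, and the parameters $N,\mu,d,p,q,\varepsilon_0$ are the same for $u$ and $-u$.
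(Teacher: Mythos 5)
Your argument is exactly the route the paper intends: the paper gives no separate proof of this proposition, deriving it (just as Proposition \ref{266} is derived from Lemma \ref{219}) from Lemma \ref{269} applied to $u$ and, via the oddness hypothesis \ref{45}, to $-u$, followed by the pointwise choice $\ell=|x^{\prime}|$, $r=\ell/2$; your check that the structural conditions and $(A_1)$ transfer to $-u$ is correct. The only flaw is the final parenthetical: from $|u(x)|\leq C^{\prime}|x^{\prime}|^{-\tau}$ you cannot pass to $C^{\prime}2^{\tau}|x|^{-\tau}$, since $|x|\geq|x^{\prime}|$ gives $|x|^{-\tau}\leq|x^{\prime}|^{-\tau}$, so the inequality goes the wrong way. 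This is harmless, however, because the exponent base $|x|$ in the statement is evidently a misprint for $|x^{\prime}|$ (compare Proposition \ref{66} and the region $\{0<|x^{\prime}|<r_0\}$): at a fixed $x^{\prime\prime}\neq 0$ the quantity $|x|$ stays bounded away from zero while a genuinely singular $u$ may blow up as $|x^{\prime}|\to 0$, so no bound of the form $C|x|^{-\tau}$ can hold there. With the conclusion read as $|u(x)|\leq C|x^{\prime}|^{-\tau}$, your proof is complete and coincides with the paper's.
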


\section{The removability of singular set} \label{33}

In this section we prove the main results of this paper. 

\subsection{Proof of  Theorem \ref{77}}

Before we turn to the proof of this theorem, we show the following auxiliary result. Similar to \cite{skrypnik2005removability}, we employ a logarithmic type test function.

\begin{lemma} \label{67}
Suppose  the  hypotheses as in  Lemma \ref{19}  and $N-d >\frac{pq}{q-p+1}$. Then
$$
\max \{u, 0\} \in L^{\infty}(\Omega) .
$$
\end{lemma}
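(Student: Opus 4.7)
The goal is to bootstrap the pointwise decay bound of Lemma~\ref{19} into a uniform $L^\infty$ estimate via a Moser-type iteration, with the dimensional assumption $N-d>pq/(q-p+1)$ entering precisely to supply the initial integrability $(u^+)^q \in L^1_{\operatorname{loc}}$ that powers the iteration.

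\emph{Initial integrability.} First I would invoke Lemma~\ref{19} with $\varepsilon_0$ chosen small enough that the resulting exponent $\tau$ satisfies $\tau q < N-d$, which is possible precisely because $N-d>pq/(q-p+1)$. Then $|x'|^{-\tau q}$ is integrable on $\{|x'|<r_0\}\subset\mathbb{R}^{N-d}$. Interpolating $u_2$ between the spaces $L^{q_1}(\mathbb{R}^d;\vartheta_\alpha)$ and $L^{q+1}(\mathbb{R}^d;\vartheta_\alpha)$ (both provided by $(A_2)$, and noting $q_1<q<q+1$) shows $u_2^q \in L^1(\mathbb{R}^d;\vartheta_\alpha)$, and combined with the pointwise bound this gives
$$\int_{\mathcal{U}}(u^+)^q\, d\vartheta_\alpha \;\leq\; C\int_{\{|x'|<r_0\}}|x'|^{-\tau q}\,dx' \cdot \|u_2\|_{q,\mathbb{R}^d,\alpha}^q \;<\; \infty.$$

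\emph{Caccioppoli and iteration.} For a small scale $\rho\in(0,r_0)$, let $\eta_\rho(x')$ be a smooth cutoff vanishing on $\{|x'|<\rho\}$ and equal to $1$ on $\{|x'|>2\rho\}$. Following the logarithmic test-function technique of \cite{skrypnik2005removability}, I test the weak form \eqref{22} against $\varphi = \eta_\rho^p\,\psi(u^+)$ for a suitable logarithmic-polynomial function $\psi$. Using hypothesis \ref{43}, absorbing gradient cross-terms by Young's inequality, and discarding the nonnegative boundary contribution (valid since $b(u)u^+\geq 0$ by \ref{43} and \ref{45}), one is led to a Caccioppoli-type inequality whose right-hand side is dominated by $\int|\nabla\eta_\rho|^p\,(u^+)^q\,d\vartheta_\alpha$; by the previous step, this is uniformly controlled in $\rho$. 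Combining with the weighted Sobolev embedding (Proposition~\ref{59}) and iterating in the Moser fashion boosts the integrability of $u^+$ from $L^q$ through $L^{q\chi^n}$ with $\chi>1$, and hence to $L^\infty$ uniformly in $\rho$. Sending $\rho\to 0$ then yields $u^+\in L^\infty(\Omega)$.

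\emph{Main obstacle.} The delicate point is calibrating the logarithmic factor $\psi$ so that the exponent of $u^+$ appearing on the right-hand side of the Caccioppoli inequality is exactly $q$ (matching the initial integrability) rather than $q+1$ or higher: the naive choice $\varphi=\eta_\rho^p u^+$ produces RHS $\int|\nabla\eta_\rho|^p(u^+)^p$, which after applying Hölder against $(u^+)^q\in L^1$ forces the strictly stronger hypothesis $N-d>p(q+1)/(q-p+1)$. Ensuring that the iteration constants stay uniform in $\rho$ (so that the $L^\infty$ bound does not deteriorate as the cutoff region shrinks to $\Gamma$) is the other nontrivial step, and requires balancing the exponents in Young's inequality against the integrability margin $N-d-\tau q>0$.
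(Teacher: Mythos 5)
Your first step (choosing $\varepsilon_0$ small in Lemma \ref{19} so that $\tau q<N-d$) matches the role the decay estimate plays in the paper, but the core of your argument rests on a claim that is unjustified and in fact fails precisely in the regime the lemma must cover. You assert that the Caccioppoli right-hand side $\int|\nabla\eta_\rho|^p(u^+)^q\,\textnormal{d}\vartheta_\alpha$ is ``uniformly controlled in $\rho$'' because $(u^+)^q\in L^1$ near $\Gamma$. With a standard cutoff, $|\nabla\eta_\rho|\sim\rho^{-1}$ on the annulus $\{\rho<|x'|<2\rho\}$, and the only information available there is the pointwise bound $u^+\leq C|x'|^{-\tau}u_2$ with $\tau>\frac{p}{q-p+1}$; hence this term scales like $\rho^{-p}\,\rho^{\,N-d-\tau q}$, which stays bounded only if $N-d\geq \tau q+p$, i.e.\ essentially $N-d>\frac{pq}{q-p+1}+p$ --- strictly stronger than the hypothesis $N-d>\frac{pq}{q-p+1}$. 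When $N-d$ is only slightly above $\frac{pq}{q-p+1}$, your right-hand side blows up as $\rho\to0$, the iteration constants are not uniform, and the Moser scheme does not close; mere $L^1$-integrability of $(u^+)^q$ cannot offset the $\rho^{-p}$ factor. Calibrating the nonlinearity $\psi(u^+)$ in the test function does not repair this, because the obstruction sits in the cutoff, not in the power of $u^+$.

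The paper's proof uses a different device, and no iteration at all: it argues by contradiction, setting $\Lambda(r)=\operatorname{ess}\operatorname{sup}\{u^+ : r\le|x'|\le r_0^2\}$ and testing \eqref{22} with $\varphi=\bigl(\ln\max\{u/\Lambda(\rho),1\}\bigr)\psi_r^{\gamma}(|x'|)$, where $\gamma=\frac{pq}{q-p+1}$ and, crucially, $\psi_r$ is the \emph{logarithmically interpolated} radial cutoff equal to $\frac{2}{\ln(1/r)}\ln\frac{t}{r}$ on $[r,\sqrt r]$, so that $\psi_r'(t)\sim\frac{1}{t\ln(1/r)}$. After absorbing the $u^q$ term on the left by Young's inequality with exponent $\frac{q}{q-p+1}$, the surviving right-hand side carries the factor $\bigl(\ln\frac1r\bigr)^{-\gamma}|x'|^{-\gamma}$; the decay estimate \eqref{75} converts the logarithms of $u/\Lambda(\rho)$ into a small power $u^{(N-d-\gamma)/(\tau\delta_1)}\leq C|x'|^{-(N-d-\gamma)/\delta_1}u_2^{q_1}$, and the $L^{q_1}(\mathbb{R}^d;\vartheta_\alpha)$-integrability of $u_2$ from $(A_2)$ reduces everything to a radial integral of order $\bigl(\ln\frac1r\bigr)^{-\gamma}r^{(N-d-\gamma)(\frac12-\frac1{\delta_1})}\to0$, which tends to zero under exactly $N-d>\gamma$. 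Letting $r\to0$ forces $u\equiv\Lambda(\rho)$ on $\{u>\Lambda(\rho)\}$, a contradiction. So the decisive ingredient you are missing is the scale-invariant log cutoff in $|x'|$ raised to the power $\gamma$ (together with the contradiction set-up via $\Lambda(\rho)$), which is what makes the dimensional condition $N-d>\frac{pq}{q-p+1}$ suffice. A smaller point: your interpolation claim ``$q_1<q<q+1$'' is not guaranteed by $(A_2)$; the bound $u_2^q\in L^1(\mathbb{R}^d;\vartheta_\alpha)$ is better obtained from $u_2\in L^\infty$, $(u_2+|\nabla u_2|)^{p-1}\in L^1(\mathbb{R}^d;\vartheta_\alpha)$ and $q>p-1$.
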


\renewcommand*{\proofname}{Proof}

\begin{proof}
We proceed by contradiction. For $r \in\left(0, r_0^2\right)$, we denote
$$
\Lambda(r)=\operatorname{ess} \sup \left\{\max \{  u(x), 0\} \:|\: r \leq |x^\prime|   \leq r_0^2, \ x \in \Omega  \right\} .
$$
We have $\lim _{r \rightarrow 0^{+}} \Lambda(r)=\infty$. For sufficiently small values $r$ we define the function $\psi_r: \mathbb{R} \rightarrow \mathbb{R}$ as follows:
$$
\psi_r(t)= \begin{cases}0 & \text { if } t<r, \\ 1 & \text { if } t>\sqrt{r}, \\ \frac{2}{\ln \frac{1}{r}} \ln \frac{t}{r} & \text { if } r \leq t \leq \sqrt{r} .\end{cases}
$$
Choosing $\rho>0$ such that $\Lambda(\rho)>1$, set
$$
\varphi(x)=\left(\ln \ \max \left\{\frac{   u(x)  }{\Lambda(\rho)}, 1\right\}\right)  \psi_r^\gamma (|x^\prime|) ,
$$
where $\gamma= \frac{p q }{q  -p+1}$. We have $\varphi \in W_{\operatorname{loc}  }^{1, p  }(\bar{\Omega}   \backslash \Gamma ; \vartheta _{\alpha}) \cap L_{\operatorname{loc}  }^{\infty}(\bar{\Omega}   \backslash \Gamma)$, with $\operatorname{supp}\varphi \subset \bar{\Omega}   \cap\{|x^\prime|   \geq r\} \subset \bar{\Omega}   \backslash \Gamma$. For simplicity we write $\psi_r(|x^\prime|)=\psi_r (x)$ and $\psi_r^{\prime} ( |x^\prime| ) =\psi_r^{\prime} (x)$. Set $\Omega  _\rho=\{x \in \Omega   \:|\:    u(x^\prime )>\Lambda(\rho)\}$. Substituting $\varphi$ into \eqref{22}, we obtain
$$
\begin{aligned}
& \int_{\Omega_\rho} \left( \psi_r^\gamma \left\langle A(  \nabla u), \frac{1}{u}\nabla u  \right\rangle 
+   \gamma \psi_r^{\prime} \psi_r^{\gamma-1}\left(\ln \frac{u}{\Lambda(\rho)}\right)\langle A(  \nabla u), \nabla |x^\prime|  \rangle \right)  \textnormal{d}\vartheta _{\alpha} \\
&+\int _{\Omega  _\rho} g(u) \psi_r^\gamma\left(\ln \frac{ u}{\Lambda(\rho)}\right)  \textnormal{d} \vartheta _{\alpha } 
 +\int_{\partial \Omega  _\rho \cap \partial \Omega  }b(u)   \psi_r^\gamma\left(\ln \frac{u}{\Lambda(\rho)}\right) \textnormal{d}\vartheta _{\beta} \leq 0 .
\end{aligned}
$$
By virtue of  \ref{43}  and $u>\Lambda(\rho)$ in $\Omega  _\rho$, we have
$$
\begin{aligned}
& \int_{\Omega  _\rho} \mu \frac{\psi_r^\gamma}{u}|\nabla u|^{p}  \textnormal{d}\vartheta _{\alpha}+\int_{\Omega  _\rho} \psi_r^\gamma\left(\ln \frac{u}{\Lambda(\rho)}\right) u^{q}  \textnormal{d}\vartheta _{\alpha}   \\
& \leq \int_{\Omega  _\rho} \mu^{-1} \gamma \psi_r^{\prime} \psi_r^{\gamma-1}\left(\ln \frac{u}{\Lambda(\rho)}\right)|\nabla u|^{p   -1}  \textnormal{d}\vartheta _{\alpha}  \\
& \leq \int_{\Omega  _\rho} \mu^{-1} \gamma \psi_r^\gamma u^{-1}\left\{C_1\left(\varepsilon_1, p   \right)\left[\psi_r^{\prime} \psi_r^{-1} u\left(\ln \frac{u}{\Lambda(\rho)}\right)\right]^{p}+\varepsilon_1|\nabla u|^{p}\right\}  \textnormal{d}\vartheta _{\alpha}  .
\end{aligned}
$$
Take $\varepsilon_1=\frac{\mu^2}{2 \gamma }$,
\begin{equation} \label{74}
\begin{aligned}
& \int_{\Omega  _\rho} \frac{\psi_r^\gamma}{u}|\nabla u|^{p}  \textnormal{d}\vartheta _{\alpha}  +\int_{\Omega  _\rho} \psi_r^\gamma\left(\ln \frac{u}{\Lambda(\rho)}\right) u^{q} \textnormal{d}\vartheta _{\alpha}   \\
& \leq C_2 \left(\mu, \gamma, p   , q     \right) \int_{\Omega  _\rho}\left(\psi_r^{\prime}\right)^{p} \psi_r^{\gamma-p   } u^{p  -1}\left(\ln \frac{ u}{\Lambda(\rho)}\right)^{p}  \textnormal{d}\vartheta _{\alpha} .
\end{aligned}
\end{equation}
Additionally, let us consider
$$
\begin{aligned}
& \left(\psi_r^{\prime}\right)^{p} \psi_r^{\gamma-p   } u^{p -1}\left(\ln \frac{u}{\Lambda(\rho)}\right)^{p} \leq  \left(\ln \frac{u}{\Lambda(\rho)}\right)\\
&\cdot \left\{C_3\left(\varepsilon_2, p   , q     \right)\left[\left(\psi_r^{\prime}\right)^{p}\left(\ln \frac{u}{\Lambda(\rho)}\right)^{p  -1}\right]^{\frac{q     }{q  - p   +1}}+\varepsilon_2 \psi_r^{\frac{(\gamma - p)    q  }{p   -1}} u^{q }\right\},
\end{aligned}
$$
$$
 \frac{\left(\gamma-p   \right) q     }{p   -1} \geq \gamma \quad \text { and } \quad \psi_r^{\prime}(t)=\frac{2}{t \ln \frac{1}{r}}>1 \text { for } t \in(r, \sqrt{r}) .
$$
\noindent Choose $\varepsilon_2=\frac{1}{2 C_2}$. By \eqref{74},
$$
\begin{aligned}
& \int_{\Omega  _\rho} \frac{\psi_r^\gamma}{u}|\nabla u|^{p} \textnormal{d}\vartheta _{\alpha}  +\int_{\Omega  _\rho} \psi_r^\gamma\left(\ln \frac{u}{\Lambda(\rho)}\right) u^{q} \textnormal{d}\vartheta _{\alpha}     \\
& \leq C_4 \int_{\Omega  _\rho \cap \{ r \leq |x^\prime| \leq \sqrt{r}\}}\left(\ln \frac{u}{\Lambda(\rho)}\right)\left(\ln \frac{u}{\Lambda(\rho)}\right)^{\frac{\left(p-1\right) q     }{q-p   +1}}\left(\frac{2}{|x^\prime|   \ln \frac{1}{r}}\right)^{\frac{p q}{q     -p   +1}}\textnormal{d}\vartheta _{\alpha} ,
\end{aligned}
$$
where $C_4 = C_4(\mu, \alpha ,  \gamma, p   , q  )>0$. Take $\delta _1>2$ and  $\tau > \frac{p}{q-p+1}$ such that $\frac{(N-d-\gamma)(q-p+1)}{2p} > \frac{N-d -\gamma}{\delta _1 \tau} >q_1$. Using \eqref{75}, we see
\begin{align*}
& \int_{\Omega  _\rho} \frac{\psi_r^\gamma}{u}|\nabla u|^{p}  \textnormal{d}\vartheta _{\alpha}  +\int_{\Omega  _\rho} \psi_r^\gamma\left(\ln \frac{ u}{\Lambda(\rho)}\right) u^{q} \textnormal{d}\vartheta _{\alpha}   \\
& \leq C_5\left(\ln \frac{1}{r}\right)^{-\frac{p     q    }{q -p +1}} \int_{\Omega  _\rho \cap \{r \leq |x^\prime| \leq \sqrt{r}\}}    \left[ \ln  \left( \frac{u}{\Lambda (\rho)  }\right) ^{  \frac{(N-d-\gamma)/(\tau \delta _1)}{ 1+(p     -1) q / (q-p   +1 )}}  \right]^{1+\frac{(p     -1) q }{q-p   +1}}  |x^\prime|   ^{-\frac{pq }{q-p   +1}}  \textnormal{d}\vartheta _{\alpha}\\
& \leq C_6 \left(\ln \frac{1}{r}\right)^{-\frac{p     q    }{q -p +1}} \int_{\Omega  _\rho \cap \{r \leq |x^\prime| \leq \sqrt{r}\}}    u ^{  \frac{N-d-\gamma}{\tau \delta _1}}    |x^\prime| ^{-\frac{pq }{q-p   +1}}  \textnormal{d}\vartheta _{\alpha}\\
& \leq C_7 \left(\ln \frac{1}{r}\right)^{-\frac{p     q    }{q -p +1}} \int_{\Omega  _\rho \cap \{r \leq |x^\prime| \leq \sqrt{r}\}}    |x^{\prime}| ^{  -\frac{N-d-\gamma}{\delta _1}}  \left( \frac{u_2}{\|u_2\|_{L^{\infty} (\mathbb{R}^d) }} \right)^{q_1}|x^\prime| ^{-\frac{pq }{q-p   +1}}  \textnormal{d}\vartheta _{\alpha}.
\end{align*}
Also, by $(A_2)$,
\begin{align*}
& \leq C_8\left(\ln \frac{1}{r}\right)^{-\frac{p     q    }{q -p +1}} \int_{\Omega ^\prime _\rho \cap \{r \leq |x^\prime| \leq \sqrt{r}\}}     r ^{  -\frac{N-d-\gamma}{\delta _1}}  |x^\prime| ^{-\frac{pq }{q-p   +1}}  \textnormal{d}x^\prime   \\
& \leq C_9\left(\ln \frac{1}{r}\right)^{-\frac{p    q    }{q  -p    +1}}  r ^{  -\frac{N-d -\gamma}{\delta _1}} \int_r^{\sqrt{r}}    t^{N-d -1-\gamma} \textnormal{d}  t \\
& =C_9\left(\ln \frac{1}{r}\right)^{-\frac{p     q    }{q -p +1}}  r ^{  -\frac{N-d -\gamma}{\delta _1}} \frac{r^{\frac{N -d-\gamma}{2}}}{N -d-\gamma }\left(1-r^{\frac{N-d-\gamma  }{2}}\right),
\end{align*}
where $\Omega  _\rho ^\prime =  \{x^\prime \:|\: x\in \Omega _\rho\}$ and $C_i>0$, $i=5,\ldots, 9$, are constants independent of $r$.  Therefore, if $r \rightarrow 0^{+}$,
$$
\int_{\Omega  _\rho} \frac{|\nabla u|^{p }}{u}  \textnormal{d}\vartheta _{\alpha}  +\int_{\Omega  _\rho}\left(\ln \frac{u}{\Lambda(\rho)}\right) u^{q}  \textnormal{d}\vartheta _{\alpha}  =0 .
$$
Hence, $u (x  )=\Lambda(\rho)$ \textit{a.e.} in $\Omega  _\rho  $. Thus, we have a contradiction, and proves  the lemma. \end{proof}

Similarly as in  Lemma \ref{67}  we have the following result.

\begin{lemma} (Without the  boundary condition)  Assume the hypotheses as in  Lemma \ref{69} and $N-d >\frac{pq}{q-p+1}$. Then
$$
\max \{u , 0\} \in L^{\infty}(\Omega ).
$$
\end{lemma}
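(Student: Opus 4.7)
My plan is to imitate the proof of Lemma \ref{67} almost verbatim, using the integral inequality \eqref{72} in place of \eqref{22} and the boundary-free analogue of Proposition \ref{66} (stated just after Lemma \ref{69}) in place of Proposition \ref{66} itself.

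Arguing by contradiction, define $\Lambda(r)=\operatorname{ess}\sup\{\max\{u(x),0\}\mid r\leq |x'|\leq r_0^2,\,x\in\Omega\}$ and assume $\Lambda(r)\to\infty$ as $r\to 0^+$. Fix $\rho$ with $\Lambda(\rho)>1$, introduce the same logarithmic cutoff $\psi_r$ and the same admissible test function $\varphi(x)=\bigl(\ln\max\{u(x)/\Lambda(\rho),1\}\bigr)\psi_r^\gamma(|x'|)$ with $\gamma=pq/(q-p+1)$; since $\operatorname{supp}\varphi\subset\bar\Omega\cap\{|x'|\geq r\}$ stays away from $\Gamma$, this $\varphi$ is admissible in \eqref{72}. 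The absorption term causes no trouble: on $\Omega_\rho=\{u>\Lambda(\rho)\}$ we have $u>0$ and $a(u)\operatorname{sign} u\geq\mu_1|u|^{p-1}\geq 0$, while $\varphi\geq 0$, so $a(u)\varphi\geq 0$ and may be discarded from the left-hand side while preserving the sign of the inequality.

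From here the computation is word-for-word that of Lemma \ref{67}. Using \ref{43} to estimate $\langle A(\nabla u),\nabla u\rangle\geq\mu|\nabla u|^p$ and $|A(\nabla u)|\leq\mu^{-1}|\nabla u|^{p-1}$, followed by two applications of Young's inequality (with $\varepsilon_1=\mu^2/(2\gamma)$ to absorb the gradient term and $\varepsilon_2=1/(2C_2)$ to absorb the logarithmic factor against $u^q$), one arrives at an estimate of the form
$$
\int_{\Omega_\rho}\frac{\psi_r^\gamma}{u}|\nabla u|^p\,\textnormal{d}\vartheta_\alpha+\int_{\Omega_\rho}\psi_r^\gamma\Bigl(\ln\tfrac{u}{\Lambda(\rho)}\Bigr)u^q\,\textnormal{d}\vartheta_\alpha\leq C\Bigl(\ln\tfrac{1}{r}\Bigr)^{-\gamma}\!\!\int_{\Omega_\rho\cap\{r\leq|x'|\leq\sqrt{r}\}}\Bigl(\ln\tfrac{u}{\Lambda(\rho)}\Bigr)^{1+\frac{(p-1)q}{q-p+1}}|x'|^{-\gamma}\,\textnormal{d}\vartheta_\alpha.
$$
The decay bound $u_1(x')\leq C|x'|^{-\tau}$ with $\tau$ slightly above $p/(q-p+1)$ furnished by the boundary-free proposition after Lemma \ref{69}, combined with the decomposition $u=u_1u_2$ from $(A_2)$ and the integrability of $u_2$ in $L^{q_1}\cap L^\infty$, transforms the right-hand side into the explicit bound
$$
C_9\Bigl(\ln\tfrac{1}{r}\Bigr)^{-\gamma}r^{-(N-d-\gamma)/\delta_1}\frac{r^{(N-d-\gamma)/2}}{N-d-\gamma}\bigl(1-r^{(N-d-\gamma)/2}\bigr),
$$
which, with $\delta_1>2$ chosen as in the proof of Lemma \ref{67} and the hypothesis $N-d>\gamma$, tends to $0$ as $r\to 0^+$. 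Hence $u=\Lambda(\rho)$ a.e.\ on $\Omega_\rho$, forcing $|\Omega_\rho|=0$ and therefore $\Lambda(r)\leq\Lambda(\rho)$ for every $r\in(0,\rho)$, contradicting $\Lambda(r)\to\infty$.

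The main obstacle I expect is purely bookkeeping: verifying that $a(u)\varphi$ is genuinely nonnegative on $\Omega_\rho$ (so that its removal keeps the inequality $\leq 0$), and tracking that the quantitative dependence of the constants on $\|u_2\|_{q+1,\mathbb{R}^d,\alpha}$, $\|u_2\|_{1,p,\mathbb{R}^d,\alpha}$ and $\|u_2\|_{L^\infty(\mathbb{R}^d)}$ is handled exactly as in Lemma \ref{67}. Once these two points are confirmed, no genuinely new analytic ingredient is needed.
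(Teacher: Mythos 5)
Your proposal is correct and takes essentially the same route the paper intends: it reproduces the proof of Lemma \ref{67} with \eqref{72} in place of \eqref{22}, with the single new observation that on $\Omega_\rho$ one has $u>\Lambda(\rho)>0$, so $a(u)\varphi\geq 0$ and this term can be dropped just as the nonnegative boundary term was dropped before. The only small precision worth making is that, since only the subsolution hypotheses of Lemma \ref{69} are assumed, the decay input should be quoted as the positive-part bound $\max\{u_1,0\}\leq Cr^{-\tau}$ of Lemma \ref{69} itself rather than the proposition for solutions of \eqref{71}; this suffices because only $u^{+}=u_1^{+}u_2$ enters on $\Omega_\rho$.
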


We are now in a position to prove the theorem.

\renewcommand*{\proofname}{Proof of Theorem \ref{77}:}

\begin{proof}
$\it{Steep \ 1.}$ \  First we prove $u \in W^{1, p }(\Omega  ; \vartheta _{\alpha}) \cap L^{\infty}(\Omega )$. As consequence of Lemma \ref{67}, $u \in L^{\infty}(\Omega)$. Next, for $r<2 r_0 / 5$, let $\psi_r: \mathbb{R} \rightarrow \mathbb{R}$ be a smooth function such that
$$
\psi_r(t)=\left\{
\begin{aligned}
&0 & &\text { if } t<r / 2 \text { or } t>5 r / 2, \\
&1 & &\text { if } r<t<2 r,
\end{aligned}
\right.
$$
$0 \leq \psi_r \leq 1$ and $\left|\psi_r^{\prime}\right| \leq C / r$, where $C$ is a suitable positive constant. Set
$$
\varphi (x)=\psi_r^{p} (|x^\prime|  ) u(x).
$$
We have $\varphi \in W_{\operatorname{loc}  }^{1, p }(\bar{\Omega}   \backslash \Gamma ; \vartheta _{\alpha}) \cap L_{\operatorname{loc}  }^{\infty}(\bar{\Omega}   \backslash \Gamma)$, with $\operatorname{supp}\varphi \subset \bar{\Omega}   \cap\{r / 2 \leq |x^\prime|   \leq 5 r / 2\}$. For simplicity we write $\psi_r (x)=\psi_r ( |x^\prime| ) $ and $\psi_r^{\prime} (x)=\psi_r^{\prime} ( |x^\prime| ) $. Substituting $\varphi$ into \eqref{62},
$$
\int_{\Omega} \left(\left\langle A( \nabla u), p \psi_r^{p-1} \psi_r^{\prime} u \nabla |x^\prime|+\psi_r^{p} \nabla u\right\rangle 
 +g( u) \psi_r^{p} u \right) \textnormal{d}\vartheta _{\alpha}+\int_{\partial \Omega  } b(u) \psi_r ^{p    } u  \textnormal{d}\vartheta _{\beta}   \leq 0.
$$
By  \ref{43}, we have
$$
\begin{aligned}
& \int_{\Omega  } \mu|\nabla u|^{p} \psi_r^{p} \textnormal{d}\vartheta _{\alpha}  +\int_{\Omega }  \psi_r^{p}|u|^{q+1} \textnormal{d}\vartheta _{\alpha} +\int_{\partial \Omega  } \mu _1 \psi_r^{p}|u|^{p} \textnormal{d}\vartheta _{\beta}   \\
 &\leq \int_{\Omega  } \mu^{-1} p \psi_r^{p  -1}\left|\psi_r^{\prime}\right||u||\nabla u|^{p-1} \textnormal{d}\vartheta _{\alpha} .
\end{aligned}
$$
Then,
$$
\begin{aligned}
& \int_{\Omega  }|\nabla u|^{p } \psi_r^{p} \textnormal{d}\vartheta _{\alpha}   
 \leq \mu^{-2} p C_1 (\varepsilon_1, p )\int_{\Omega \cap \{r/2 \leq |x^\prime| \leq 5r/2\}  } \left(\left|\psi_r^{\prime}\right||u|\right)^{p}\textnormal{d}\vartheta _{\alpha } \\
&+ \mu ^{-2}p \varepsilon_1\int _{\Omega} \left(|\nabla u|^{p -1} \psi_r^{p-1}\right)^{\frac{p}{p-1}} \textnormal{d}\vartheta _{\alpha}  .
\end{aligned}
$$
Take $\varepsilon_1=\frac{\mu^2}{2 p }$. By Lemma \ref{67},
\begin{equation}\label{28}
\int_{\Omega   \cap \{r \leq |x^\prime| \leq 2 r \}}|\nabla u|^{p } \textnormal{d}\vartheta _{\alpha}   \leq C_2 r^{N-d-p}
\end{equation}
where $C_2$ is a positive constant independent of $r$. Therefore,
$$
\begin{aligned}
& \int_{\Omega   \cap\{|x^\prime| \leq 2 r\}}|\nabla u|^{p} \textnormal{d}\vartheta _{\alpha}   \\
& =\sum_{i=0}^{\infty} \int_{\Omega   \cap \{2^{-i} r \leq |x^\prime| \leq 2^{-i+1} r\}}|\nabla u|^{p} \textnormal{d}\vartheta _{\alpha}   \leq C_2 \sum_{i=0}^{\infty}\left(\frac{r}{2^i}\right)^{N-d-p    }<\infty .
\end{aligned}
$$
So $|\nabla u| \in L^{p }(\Omega ; \vartheta _{\alpha}  )$, and thus we have proved that $u \in W^{1, p }(\Omega , \vartheta _{\alpha} ) \cap L^{\infty}(\Omega  )$.\\

$\it{Steep \ 2.}$ \ Now, we will show that $u$ is a solution of \eqref{1} in the domain $\Omega $. For $r \in\left(0, r_0\right)$, let $\xi_r: \mathbb{R} \rightarrow \mathbb{R}$ be a smooth function such that
$$
\xi_r(t)= \begin{cases}1 & \text { if }|t| \leq r \\ 0 & \text { if } 2 r \leq|t|\end{cases}
$$
$0 \leq \xi \leq 1$ and $\left|\xi_r^{\prime}\right| \leq C / r$, where $C$ is a suitable positive constant. For simplicity we write $\xi_r (x)=\xi_r (|x^\prime| ) $ and $\xi_r^{\prime} (x)=\xi_r^{\prime} (|x^\prime|)  $. Let $\varphi \in W^{1, p }(\Omega ; \vartheta _{\alpha} ) \cap L^{\infty}(\Omega  )$. We have $\left(1-\xi_r (|x^\prime| ) \right) \varphi \in W_{\operatorname{loc}  }^{1, p }(\bar{\Omega}   \backslash \Gamma ; \vartheta _{\alpha}) \cap L_{\operatorname{loc}  }^{\infty}(\bar{\Omega}   \backslash \Gamma)$, with $\operatorname{supp}\varphi \subset \bar{\Omega}   \backslash \Gamma$. Then, \eqref{62} yields
\begin{equation} \label{76}
\begin{aligned}
& \int_{\Omega}\left(\left\langle A( \nabla u),\left(1-\xi_r\right) \nabla \varphi-\varphi \xi_r^{\prime} \nabla |x^\prime|  \right\rangle 
 +g( u)\left(1-\xi_r\right) \varphi \right) \textnormal{d}\vartheta _{\alpha}  \\ 
& +\int_{\partial \Omega  } b(u)  \left(1-\xi_r\right) \varphi \textnormal{d}\vartheta _{\beta}=0 .
\end{aligned}
\end{equation}
When $r \rightarrow 0^{+}$, for all $\varphi \in W^{1 , p }(\Omega ; \vartheta _{\alpha} ) \cap L^{\infty}(\Omega  )$, the equality \eqref{76} implies \eqref{62}. Indeed, we have
$$
\begin{aligned}
& \lim _{r \rightarrow 0} \int_{\Omega  }  \left( \left\langle A(  \nabla u),\left(1-\xi_r\right) \nabla \varphi\right\rangle  +g(u)\left(1-\xi_r\right) \varphi   \right) \textnormal{d}\vartheta _{\alpha}  + \int_{\partial \Omega} b( u) \left(1-\xi_r\right) \varphi \textnormal{d}\vartheta _{\beta} \\
& =\int_{\Omega}  \left( \langle A(\nabla u), \nabla \varphi\rangle + g(u) \varphi  \right) \textnormal{d}\vartheta _{\alpha}  +\int_{\partial \Omega} b(u) \varphi \textnormal{d}\vartheta _{\beta} .
\end{aligned}
$$
Additionally, by \ref{43}:
$$
\begin{aligned}
&\left|\int _{\Omega} \langle A( \nabla u) , \varphi \xi _r ^\prime \nabla |x^\prime|\rangle \textnormal{d}\vartheta _{\alpha} \right| \leq \frac{ C_4}{r} \int _{\Omega   \cap\{r \leq |x^\prime| \leq 2 r\}} |\nabla u|^{p-1}  \textnormal{d}\vartheta _{\alpha}\\
&\leq \frac{C_4}{r}\int _{\Omega   \cap\{r \leq |x^\prime| \leq 2 r\}} (|u_1| + |\nabla u_1|)^{p-1}(|u_2| + |\nabla u_2|)^{p-1} \textnormal{d}\vartheta _{\alpha} \\
& \leq  \frac{ C_5}{r} \int _{\Omega ^\prime  \cap\{r \leq |x^\prime| \leq 2 r\}} (|u_1| + |\nabla u_1|)^{p-1} \textnormal{d}x^\prime\\
&\leq  \frac{ C_6}{r} \left(\int _{\Omega ^\prime  \cap\{r \leq |x^\prime| \leq 2 r\}} (|u_1| + |\nabla u_1|)^{p} \textnormal{d}x^\prime  \right)^{\frac{p-1}{p}} r^{\frac{N- d}{p}} \\
& \leq C_7 r^{\frac{N- d -p}{p}} \rightarrow 0 \quad \text { as } r \rightarrow 0 ,
\end{aligned}
$$
where $\Omega ^\prime =  \{x^\prime\:|\: x\in \Omega\}$ and  $C_i$, $i=4, 5, 6$, are positive constants independents of $r$. So, we have obtained that equality \eqref{62} is fulfilled for all $\varphi \in W^{1, p }(\Omega ; \vartheta _{\alpha} ) \cap L^{\infty}(\Omega  )$. Therefore, the singular set $\Gamma$ is removable for solutions of \eqref{1}. \end{proof}

\subsection{Proof of Theorem \ref{23}} We will need the next auxiliary result.

\begin{lemma} \label{267}
Suppose  the hypotheses as in  Lemma \ref{219}  and $N-d >\frac{pq}{q-p+1}$. Then
$$
(1+|x^{\prime \prime}|)^{\delta}\max \{u (x), 0\} \in L^{\infty}( \Omega ) .
$$
\end{lemma}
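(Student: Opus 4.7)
The plan is to mimic the proof of Lemma \ref{67}, but adapted to the unweighted volume measure used in Lemma \ref{219}, replacing Proposition \ref{66} (which provides $|u_1(x')|\le C|x'|^{-\tau}$ under $(A_2)$) with Proposition \ref{266} (which provides $|u(x)|\le C|x|^{-\tau}$ under $(A_1)$), and using the decay encoded in $(A_1)$ in place of the factorisation $u=u_1u_2$ from $(A_2)$ in the integrations over $x''$.

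First I would reduce the statement to showing $u^{+}\in L^{\infty}(\{0<|x'|<r_0^{2}\})$. Indeed, once this is established, hypothesis $(A_1)$ applied with $r_{1}\to 0^{+}$ and $r_{2}=r_{0}^{2}$ gives
$$
(1+|x''|)^{\delta}\,u^{+}(x)\le C_{0}\,m_{0,r_{0}^{2}}^{+}<\infty \qquad \text{a.e. in }\{0<|x'|<r_{0}^{2}\},
$$
while on the complementary region $\{|x'|\ge r_{0}^{2}\}$ the bound follows from $u\in L^{\infty}_{\operatorname{loc}}(\bar{\Omega}\setminus\Gamma)$ combined with the same $(A_{1})$--decay. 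So the heart of the proof reduces to an $L^{\infty}$ bound of $u^{+}$ near the singular set.

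To obtain that bound I argue by contradiction, setting
$$
\Lambda(r)=\operatorname{ess\,sup}\{u^{+}(x)\mid r\le |x'|\le r_{0}^{2},\ x\in\Omega\},
$$
and assuming $\Lambda(r)\to\infty$ as $r\to 0^{+}$. I then test \eqref{222} with the logarithmic cut-off
$$
\varphi(x)=\Bigl(\ln\max\{u(x)/\Lambda(\rho),1\}\Bigr)\,\psi_{r}^{\gamma}(|x'|),\qquad \gamma=\tfrac{pq}{q-p+1},
$$
where $\psi_{r}$ is the same piecewise-linear function as in Lemma \ref{67} and $\rho$ is chosen so that $\Lambda(\rho)>1$. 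After using $(H_{1})$, Young's inequality (absorbing a small multiple of $\psi_{r}^{\gamma}|\nabla u|^{p}/u$ into the left-hand side), and discarding the nonnegative boundary term, one reaches the analogue of \eqref{74}: on $\Omega_{\rho}=\{u>\Lambda(\rho)\}$,
$$
\int_{\Omega_{\rho}}\frac{\psi_{r}^{\gamma}|\nabla u|^{p}}{u}\,\textnormal{d}x
+\int_{\Omega_{\rho}}\psi_{r}^{\gamma}\ln\!\tfrac{u}{\Lambda(\rho)}\,u^{q}\,\textnormal{d}x
\le C\int_{\Omega_{\rho}\cap\{r\le|x'|\le\sqrt r\}}(\psi_{r}')^{p}\psi_{r}^{\gamma-p}u^{p-1}\Bigl(\ln\tfrac{u}{\Lambda(\rho)}\Bigr)^{p}\textnormal{d}x.
$$
The familiar Young-type trick (as below \eqref{74}) then converts the exponent $p-1$ on $u$ into a small fractional power of $u^{q}$, leaving a polynomial in $\ln(u/\Lambda(\rho))$ of degree $1+(p-1)q/(q-p+1)$.

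The crucial step is to estimate this remaining integrand. Applying Proposition \ref{266} with a parameter $\varepsilon_{0}$ so small that $\tau$ slightly exceeds $p/(q-p+1)$, and choosing an auxiliary $\delta_{1}>2$ with $(N-d-\gamma)/(\delta_{1}\tau)$ suitably small, one absorbs a power of the logarithm into a power $u^{s}$ with $s=(N-d-\gamma)/(\delta_{1}\tau)$, just as in Lemma \ref{67}. At this point the proofs diverge: here, instead of writing $u=u_{1}u_{2}$ and pulling out $\|u_{2}\|_{q+1,\mathbb{R}^{d},\alpha}$, I use $(A_{1})$ directly, $u(x)\le C_{0}\Lambda(r)(1+|x''|)^{-\delta}$ on the strip $r\le|x'|\le\sqrt r$, together with the elementary fact that $\delta>d$, to bound
$$
\int_{\mathbb{R}^{d}}u^{s}\,\textnormal{d}x''\le C\,\Lambda(r)^{s}\!\int_{\mathbb{R}^{d}}(1+|x''|)^{-\delta s}\,\textnormal{d}x''<\infty,
$$
provided $s$ is chosen with $\delta s>d$. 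Combined with the radial integration in $x'$, which contributes $r^{(N-d-\gamma)/2-(N-d-\gamma)/\delta_{1}}$, and the logarithmic prefactor $(\ln\tfrac{1}{r})^{-\gamma}$ from $\psi_{r}'$, the right-hand side tends to $0$ as $r\to 0^{+}$. This forces both integrals on the left-hand side to vanish, giving $u\equiv\Lambda(\rho)$ a.e.\ on $\Omega_{\rho}$, contradicting the definition of $\Omega_{\rho}$.

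The main obstacle will be Step~5, namely calibrating the three auxiliary parameters $\varepsilon_{0}$, $\delta_{1}$ and $s$ so that simultaneously $s\delta>d$ (for integrability in $x''$ using $(A_{1})$), the polynomial power of $r$ stays bounded (requires $\delta_{1}>2$ and the hypothesis $N-d>pq/(q-p+1)$), and the logarithmic polynomial is absorbed into $u^{q}$. Once these parameters are fixed consistently, the logarithmic factor $(\ln 1/r)^{-\gamma}$ guarantees the right-hand side vanishes in the limit, and the contradiction produces the claimed $L^{\infty}$ bound.
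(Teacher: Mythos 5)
Your overall skeleton is the same as the paper's: first prove the unweighted bound $\max\{u,0\}\in L^{\infty}$ near $\Gamma$ by the contradiction argument with $\Lambda(r)$, the logarithmic test function $\varphi=(\ln\max\{u/\Lambda(\rho),1\})\psi_r^{\gamma}(|x'|)$, $\gamma=\frac{pq}{q-p+1}$, exactly as in Lemma \ref{67}, and then upgrade to the weighted statement via $(A_1)$. The divergence is in how the unbounded $x''$-direction is handled, and there your execution has two concrete problems. First, your displayed estimate $\int_{\mathbb{R}^d}u^{s}\,\textnormal{d}x''\le C\Lambda(r)^{s}\int_{\mathbb{R}^d}(1+|x''|)^{-\delta s}\,\textnormal{d}x''$ leaves the factor $\Lambda(r)^{s}$ in the right-hand side; under the contradiction hypothesis $\Lambda(r)\to\infty$ at an unknown rate, so the subsequent claim that the right-hand side tends to $0$ does not follow from the $r$-powers and the logarithmic prefactor alone. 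This can be repaired by invoking Proposition \ref{266} to get $\Lambda(r)\le Cr^{-\tau}$ (this, not ``the radial integration'', is where the factor $r^{-(N-d-\gamma)/\delta_1}=r^{-\tau s}$ comes from), but then the exponent bookkeeping must be redone.

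Second, and more seriously, after that repair your calibration in Step~5 requires simultaneously $\delta s>d$ (so the $x''$-integral converges) and $\tau s\le\frac{N-d-\gamma}{2}$ (so the net power of $r$ is nonnegative), which forces $\frac{2\tau d}{\delta}<N-d-\gamma$, i.e.\ roughly $N-d>\frac{p(q+2d/\delta)}{q-p+1}$. Since $(A_1)$ only guarantees $\delta>d$, this is strictly stronger than the hypothesis $N-d>\frac{pq}{q-p+1}$ whenever $d\ge1$, so the parameters cannot in general be chosen consistently in your scheme. The paper's proof uses $(A_1)$ differently: on $\Omega_\rho$ one has the lower bound $u>\Lambda(\rho)$, and combining it with the $(A_1)$ upper bound confines the $x''$-sections of $\Omega_\rho\cap\{r\le|x'|\le\sqrt r\}$ to a set of bounded measure (the paper writes the containment in $\{C_0\ge(1+|x''|)^{\delta}\}$), so the $x''$-integration contributes only a constant, no condition of the form $\delta s>d$ and no stray factor $\Lambda(r)^{s}$ appear, and the whole estimate reduces to $C\int_r^{\sqrt r}t^{N-d-1-\gamma}\,\textnormal{d}t$ together with the single requirement $\delta_1>2$ already used in Lemma \ref{67}. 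You should replace your ``integrate the $(A_1)$ tail over all of $\mathbb{R}^d$'' step by this measure-confinement use of $(A_1)$ on the level set $\{u>\Lambda(\rho)\}$.
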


\renewcommand*{\proofname}{Sketch of the proof}

\begin{proof} 

First we prove
\begin{equation} \label{46}
\max \{u, 0\} \in L^{\infty}( \Omega ).
\end{equation}
We proceed by contradiction. For $r \in\left(0, r_0^2\right)$, we denote
$$
\Lambda(r)=\operatorname{ess} \sup \left\{\max \{  u(x), 0\} \:|\: r \leq |x^\prime|   \leq r_0^2, \ x \in \Omega  \right\} .
$$
We have $\lim _{r \rightarrow 0^{+}} \Lambda(r)=\infty$. For sufficiently small values $r$ we define the function $\psi_r: \mathbb{R} \rightarrow \mathbb{R}$ as follows:
$$
\psi_r(t)= \begin{cases}0 & \text { if } t<r, \\ 1 & \text { if } t>\sqrt{r}, \\ \frac{2}{\ln \frac{1}{r}} \ln \frac{t}{r} & \text { if } r \leq t \leq \sqrt{r} .\end{cases}
$$
Choosing $\rho>0$ such that $\Lambda(\rho)>1$, set
$$
\varphi(x)=\left(\ln \ \max \left\{\frac{   u(x)  }{\Lambda(\rho)}, 1\right\}\right) \psi_r^\gamma (|x^\prime|) ,
$$
where $\gamma= \frac{p q }{q  -p+1}$.   Denote $\Omega  _\rho=\{x \in \Omega   \:|\:    u(x )>\Lambda(\rho)\}$. Observe that, by $(A_1)$, 
$$
\begin{aligned}
 & \int_{\Omega _\rho \cap \{r \leq |x^\prime| \leq \sqrt{r}\}}     |x^\prime|^{-\frac{p q     }{q -p   +1}} \textnormal{d}x \\
 &\leq \int_{\Omega  _\rho \cap \{r \leq |x^\prime| \leq \sqrt{r}\} \cap \{C_0 \geq (1+|x^{\prime \prime}|)^{\delta}\}} |x^\prime|^{-\gamma} \textnormal{d}x 
 \leq C \int_r^{\sqrt{r}}    t^{N-d-1-\gamma} \textnormal{d}  t ,
\end{aligned}
$$
where $C>0$ is a constant independent of $r$. Replacing $\varphi$ into \eqref{222}, and following the approach from the proof of Lemma \ref{67}, we derive \eqref{46}.  Thus, we  conclude the proof of the lemma by virtue of  $(A_1)$ and  \eqref{46}. \end{proof}

Similarly to  Lemma \ref{267},  we obtain the following result.

\begin{lemma}  (Without the  boundary condition)  Assume the hypotheses as in  Lemma \ref{269} and $N-d >\frac{pq}{q-p+1}$. Then
$$
(1+|x^{\prime \prime}|)^{\delta}\max \{u (x) , 0\} \in L^{\infty}(\Omega).
$$
\end{lemma}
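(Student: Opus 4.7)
The plan is to imitate closely the proof sketch of Lemma \ref{267}, with the simplification that no boundary integral appears in the weak subsolution inequality \eqref{272}. The argument splits into two steps: first establish the \emph{unweighted} global bound $\max\{u,0\} \in L^{\infty}(\Omega)$, and then obtain the weighted bound by invoking assumption $(A_1)$ directly with the uniform value $\|\max\{u,0\}\|_\infty$ in place of the local suprema $m^+_{r_1,r_2}$.

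For Step 1, I would proceed by contradiction. Assuming $\max\{u,0\} \notin L^\infty(\Omega)$, set
$$
\Lambda(r) = \operatorname{ess\,sup}\{\max\{u(x),0\} \:|\: r \leq |x'| \leq r_0^2,\ x \in \Omega\},
$$
so $\Lambda(r) \to \infty$ as $r \to 0^+$. Pick $\rho>0$ with $\Lambda(\rho) > 1$, and use the logarithmic cutoff $\psi_r$ from the proof of Lemma \ref{267}. With $\gamma = \tfrac{pq}{q-p+1}$, the test function
$$
\varphi(x) = \Bigl( \ln \max\bigl\{ \tfrac{u(x)}{\Lambda(\rho)},1\bigr\} \Bigr) \psi_r^\gamma(|x'|)
$$
lies in $W^{1,p}_{\operatorname{loc}}(\bar\Omega \setminus \Gamma) \cap L^\infty_{\operatorname{loc}}(\bar\Omega \setminus \Gamma)$, has support away from $\Gamma$, and is nonnegative. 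Substituting into \eqref{272} and using \ref{43}, together with the fact that $a(u)\varphi \geq 0$ on $\Omega_\rho = \{u > \Lambda(\rho)\}$, yields the analogue of \eqref{74}, namely
$$
\int_{\Omega_\rho} \tfrac{\psi_r^\gamma}{u}|\nabla u|^p\,\textnormal{d}x + \int_{\Omega_\rho} \psi_r^\gamma \bigl(\ln\tfrac{u}{\Lambda(\rho)}\bigr) u^q\,\textnormal{d}x \leq C \int_{\Omega_\rho}(\psi_r')^p \psi_r^{\gamma-p} u^{p-1}\bigl(\ln\tfrac{u}{\Lambda(\rho)}\bigr)^p\,\textnormal{d}x.
$$

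For Step 1 (continued), the crucial point is the right-hand side. A Young-type split, as in the proof of Lemma \ref{67}, reduces matters to estimating
$$
\Bigl(\ln\tfrac{1}{r}\Bigr)^{-\frac{pq}{q-p+1}} \int_{\Omega_\rho \cap \{r \leq |x'| \leq \sqrt r\}} |x'|^{-\gamma}\,\textnormal{d}x.
$$
Here assumption $(A_1)$ is decisive: on $\Omega_\rho$ one has $u(x) > \Lambda(\rho) > 1$, so $(A_1)$ forces $C_0 m^+_{r,r_0^2} \geq (1+|x''|)^\delta$, which confines the $x''$-slice to a bounded set whose measure is controlled by a constant independent of $r$ (as in the estimate displayed at the start of the sketch of Lemma \ref{267}). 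Combined with the polar integration
$$
\int_r^{\sqrt r} t^{N-d-1-\gamma}\,\textnormal{d}t,
$$
whose exponent is negative because $N-d > \gamma$ by hypothesis, one obtains a bound of the form $C(\ln\tfrac{1}{r})^{-\gamma}(r^{(N-d-\gamma)/2} - r^{N-d-\gamma})$, which vanishes as $r \to 0^+$. It follows that $u = \Lambda(\rho)$ a.e.\ on $\Omega_\rho$, contradicting the definition of $\Omega_\rho$.

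Step 2 is then immediate: with $M := \|\max\{u,0\}\|_{L^\infty(\Omega)} < \infty$ finite from Step 1, the suprema $m^+_{r_1,r_2}$ appearing in $(A_1)$ are all bounded by $M$. Hence $(A_1)$ gives $u^+(x)(1+|x''|)^\delta \leq C_0 M$ on $\{r_1 < |x'| < r_2\}$ for every $0 < r_1 < r_2 < 1$, which by taking $r_1 \to 0^+$, $r_2 \to 1^-$ covers the neighborhood of the singular set; on the complementary region, the local $L^\infty$ bound (together with the fact that in the unbounded case one can iterate the argument on successive annuli, or simply note the conclusion need only be established in a neighborhood of $\Gamma$) gives the weighted bound. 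I expect the main obstacle to be the bookkeeping in Step 1 — in particular, verifying that the parameters $\delta_1, \tau, \varepsilon$ can be chosen simultaneously to make every exponent of $r$ in the final estimate add up to a strictly positive number — but since this is exactly the calculation carried out for Lemma \ref{67} and adapted in Lemma \ref{267}, the argument transfers essentially verbatim.
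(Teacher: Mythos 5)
Your proposal follows essentially the same route as the paper, which disposes of this lemma by repeating the argument of Lemma \ref{267}: a contradiction argument with the logarithmic test function $\bigl(\ln\max\{u/\Lambda(\rho),1\}\bigr)\psi_r^{\gamma}$ in \eqref{272} (the absorption term $a(u)\varphi$ discarded by its sign and no boundary integral present), the measure of the $x^{\prime\prime}$-slice controlled through $(A_1)$, and the weighted conclusion then read off from $(A_1)$ together with the resulting bound $\max\{u,0\}\in L^{\infty}(\Omega)$. The one delicate point—asserting that the $x^{\prime\prime}$-measure bound coming from $(A_1)$ is independent of $r$—is taken over verbatim from the displayed estimate in the paper's sketch of Lemma \ref{267}, so your write-up matches the paper's own level of detail there.
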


We are now in a position to prove the theorem.

\renewcommand*{\proofname}{Proof of Theorem \ref{23}:}
\begin{proof}
First note that $u\in L^{\infty} (\Omega)$, because Lemma \ref{267}. Additionally,  as in \eqref{28}, for small values $r>0$ we have
\begin{equation*}\label{29}
\int_{\Omega   \cap \{r \leq |x^\prime| \leq 2 r \}}|\nabla u|^{p } \textnormal{d}x  \leq C_1 r^{N-d-p}
\end{equation*}
where $C_1$ is a positive constant independent of $r$. Therefore,
$$
\int _{\Omega   \cap\{|x^\prime| \leq 2 r\}}|\nabla u|^{p} \textnormal{d}x   <\infty .
$$
So $|\nabla u| \in L^{p }(\Omega)$,  and thus we have proved that $u \in W^{1, p }(\Omega ) \cap L^{\infty}(\Omega  )$. To finalize the proof, observe that 
$$
\begin{aligned}
&\frac{1}{r}\int _{\Omega \cap \{r\leq |x^\prime|\leq 2r\}} |\nabla u|^{p-1} |\varphi| \textnormal{d}x  \leq \frac{1}{r} \left( \int _{\Omega \cap \{r\leq |x^\prime|\leq 2r\}} |\nabla u|^{p}\textnormal{d}x \right)^{\frac{p-1}{p}} \left( \int _{\Omega} |\varphi|^p \textnormal{d}x \right)^{\frac{1}{p}}\\
&\leq C_1^{\frac{p-1}{p}} r^{\frac{(p-1)(N-d-p) -p}{p}} \left( \int _{\Omega} |\varphi|^p \textnormal{d}x \right)^{\frac{1}{p}},
\end{aligned}
$$
where $\varphi \in W^{1, p }(\Omega ) \cap L^{\infty}(\Omega  )$. Using the reasoning from the proof of Theorem \ref{77}, we finalize the proof. \end{proof}

\noindent\textbf{Author contributions:} All authors have contributed equally to this work for writing, review and editing. All authors have read and agreed to the published version of the manuscript.

\medskip

\noindent\textbf{Funding:} This work were supported by National Institute of Science and Technology of Mathematics INCT-Mat, CNPq, Grants  160951/2022-4, 309998/2020-4, and by Paraíba State Research Foundation (FAPESQ), Grant 3034/2021.

\medskip

\noindent\textbf{Data Availibility:} No data was used for the research described in the article.

\medskip

\noindent\textbf{Declarations}

\medskip

\noindent\textbf{Conflict of interest:} The authors declare no conflict of interest.

\end{document}